\def\thetitle{The diameter of the uniform spanning tree of dense graphs}
\definecolor{CombinatoricaAqua}{HTML}{00698C}
\definecolor{CombinatoricaBlue}{HTML}{3A3293}
\definecolor{CombinatoricaBrown}{HTML}{66220C}
\definecolor{CombinatoricaRed}{HTML}{DF2A27}
\definecolor{HarvardCrimson}{rgb}{0.6471, 0.1098, 0.1882}
\let\reftagform@=\tagform@
\def\tagform@#1{\maketag@@@
	{(\ignorespaces\textcolor{CombinatoricaBrown}{#1}\unskip\@@italiccorr)}}
\renewcommand{\eqref}[1]{\textup{\reftagform@{\ref{#1}}}}
\Crefname{fact}{Fact}{Facts}
\Crefname{claim}{Claim}{Claims}
\Crefname{assumption}{Assumption}{Assumptions}
\declaretheoremstyle[
spaceabove=\topsep, spacebelow=\topsep,
headfont=\color{CombinatoricaBrown}\normalfont\bfseries,
bodyfont=\itshape,
]{thm}
\declaretheoremstyle[
spaceabove=\topsep, spacebelow=\topsep,
headfont=\color{CombinatoricaBrown}\normalfont\bfseries,
bodyfont=\normalfont,
]{dfn}
\declaretheoremstyle[
spaceabove=0.5\topsep, spacebelow=0.5\topsep,
headfont=\color{CombinatoricaBrown}\normalfont\bfseries,
bodyfont=\normalfont,
]{rmk}
\declaretheorem[style=thm,parent=section]{theorem}
\declaretheorem[style=thm,sibling=theorem]{lemma}
\declaretheorem[style=thm,sibling=theorem]{claim}
\declaretheorem[style=rmk,sibling=theorem]{remark}
\declaretheorem[style=definition,sibling=theorem]{definition}
\renewcommand{\PrintNames@a}[4]{%
	\PrintSeries{\name}
	{#1}
	{}{ and \set@othername}
	{,}{ \set@othername}
	{}{ and \set@othername}
	{#2}{#4}{#3}%
}
\def\mathcolor#1#{\@mathcolor{#1}}
\def\@mathcolor#1#2#3{%
	\protect\leavevmode
	\begingroup
	\color#1{#2}#3%
	\endgroup
}
\definecolor{Red}{rgb}{0.618,0,0}
\definecolor{Blue}{rgb}{0,0,1}
\definecolor{Green}{rgb}{0,0.298,0}
\title{\thetitle}
\author{Noga Alon \and Asaf Nachmias \and Matan Shalev}
\def\namedlabel#1#2{\begingroup
  #2%
  \def\@currentlabel{#2}%
  \phantomsection\label{#1}\endgroup
}
\newcommand{\defn}[1]{{\bfseries #1}}
\newcommand{\eps}{\varepsilon}
\newcommand{\NN}{\mathbb{N}}
\newcommand{\ZZ}{\mathbb{Z}}
\newcommand{\cB}{\mathcal{B}}
\newcommand{\cT}{\mathcal{T}}
\newcommand{\LERW}{\mathsf{LERW}}
\newcommand{\floor}[1]{\left\lfloor{#1}\right\rfloor}
\newcommand{\ceil}[1]{\left\lceil{#1}\right\rceil}
\newcommand{\es}{\varnothing}
\DeclareMathOperator{\diam}{diam}
\newcommand{\vect}{\mathbf}
\newcommand{\pr}[0]{\mathbb{P}}
\newcommand{\E}[0]{\mathbb{E}}
\DeclareMathOperator{\proband}{and}
\newcommand{\pand}[0]{\ \proband \ }
\newcommand{\dtv}{d_{\mathrm{TV}}}
\newcommand{\tv}{\mathrm{TV}}
\newcommand{\ind}{\mathbf{1}}
\DeclareMathOperator{\vol}{Vol}
\newcommand{\UST}{\mathsf{UST}}
\newcommand{\tmix}{t_{\mathrm{mix}}}
\newcommand{\LE}{\mathsf{LE}}
\newcommand{\bub}{\cB}
\newcommand{\deco}{\mathcal{P}}
\DeclareMathOperator{\Vol}{Vol}
\begin{document}
\maketitle
\begin{abstract} We show that the diameter of a uniformly drawn spanning tree of a simple connected graph on $n$ vertices with minimal degree linear in $n$ is typically of order $\sqrt{n}$. A byproduct of our proof, which is of independent interest, is that on such graphs the Cheeger constant and the spectral gap are comparable. 
\end{abstract}
\section{Introduction}\label{sec:introduction}
The \defn{uniform spanning tree} of a finite connected graph $G$, denoted by $\UST(G)$, is a uniformly chosen random spanning tree of $G$.  
The main result of this paper is that the \defn{diameter} of the $\UST$, i.e., the largest distance between two vertices of the $\UST$, on graphs with linear minimal degree grows like the square root of the number of vertices with high probability.

When $G$ is the complete graph on $n$ vertices, $K_n$, much more is known. A classical result of Szekeres \cite{Sze83} (see also \cite{Kolchin}) explicitly provides the limiting distribution of the diameter of $\UST(K_n)$ scaled by $n^{-1/2}$. This was greatly extended by the influential work of Aldous \cites{ACRT1,ACRT2,ACRT3} and Le Gall \cite{LeGall05,LeGall06} who proved that $\UST(K_n)$, viewed as a random metric space and scaled by $n^{-1/2}$, converges in distribution with respect to the Gromov-Hausdorff distance to a canonical random compact metric space known as the Continuum Random Tree \cite{ACRT1}. 

The $\UST$ is a critical statistical physics model, hence it is expected that as long as the base graph $G$ is ``high dimensional'', $\UST(G)$ should have a similar geometry to that of $\UST(K_n)$. This high dimensionality condition is typically some good isoperimetric condition. This has been pursued in \cite{MNS19+} where the authors show that the diameter of $\UST(G)$ is of order $\sqrt{n}$ for a large class of high dimensional graphs including, for example, $\ZZ^5_n$, the hypercube $\{0,1\}^n$ and regular expanders. The dense graphs we study in this paper, however, can be very far from being high dimensional. For instance, two cliques on $n/2$ vertices connected by an edge will have the worst isoperimetric inequality, yet the diameter of its $\UST$ is still of order $\sqrt{n}$. We now state our main result. For a connected graph $H$ we write $\diam(H)$ for the maximal graph distance in $H$ between any two vertices. 
\begin{theorem}\label{thm:main}
	For any $\eps, \delta\in(0,1)$ there exists $C=C(\delta,\eps)\in (1,\infty)$ such that if $G$ is a connected simple graph on $n$ vertices with minimal degree at least $\delta n$, then,
	\begin{equation*}
		\pr\left(C^{-1}\sqrt{n}\le\diam(\UST(G))\le C\sqrt{n}\right) \ge 1-\eps \, ,
	\end{equation*}
\end{theorem}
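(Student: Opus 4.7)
The strategy is to combine Wilson's algorithm with a cluster-sensitive analysis of the loop-erased random walk ($\LERW$). The starting point is the auxiliary result in the abstract: for graphs with $\deg_{\min}\ge\delta n$, the Cheeger constant $\Phi(G)$ and the spectral gap $\lambda_2(G)$ are comparable, $\lambda_2(G)\ge c(\delta)\Phi(G)$, which upgrades the classical quadratic Cheeger inequality. I would prove this by showing that any near-minimizing Cheeger cut $(S,S^c)$ has both sides of linear size with each side internally quasi-expanding (a consequence of the linear minimum degree after pruning a controlled fraction of boundary vertices), so that any near-bottleneck eigenfunction must be essentially two-valued, forcing its Rayleigh quotient to be linear in $\Phi$.

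The consequence is that the walk within any ``cluster'' --- a side of a Cheeger cut or of a recursively obtained cut --- has relaxation time $O(1)$ and hitting times of linear order in the cluster size, together with effective resistances of order $1/|\text{cluster}|$ via dense routing. The $\LERW$ from $u$ to $v$ on $G$ can then be decomposed along the cluster hierarchy: inside each visited cluster the $\LERW$ behaves as on a well-mixing subgraph and contributes length $O(\sqrt{|\text{cluster}|})\le O(\sqrt{n})$, while the number of inter-cluster crossings is geometric and hence $O(1)$ on average. Summing, $|\LERW(u\to v)|=O(\sqrt{n})$ with high probability, uniformly in $u,v$.

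With this in hand, the upper bound $\diam(\UST(G))\le C\sqrt{n}$ follows from Wilson's algorithm rooted at a uniformly random vertex $r$: the $\UST$-distance from any $v$ to $r$ equals $|\LERW(v\to r)|$, and a peeling/concentration argument across dyadic branch-length scales upgrades the individual tail bounds to a uniform maximum over $v$, giving $\diam(\UST(G))\le 2\max_v|\LERW(v\to r)|=O(\sqrt{n})$. For the matching lower bound, sample two vertices $u,v$ uniformly at random; their $\UST$-distance is exactly $|\LERW(u\to v)|$, and a Paley--Zygmund argument using first- and second-moment estimates of matching order $\sqrt{n}$ delivers $|\LERW(u\to v)|\ge c\sqrt{n}$ with constant probability.

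The main obstacle is that these graphs can be highly inhomogeneous --- the twin-clique joined by a single edge has arbitrarily small Cheeger constant and global hitting times of order $n^2$ --- so results relying on global high-dimensionality such as \cite{MNS19+} do not apply. The cluster-decomposition of the $\LERW$, underwritten by the linear Cheeger inequality, is the technical heart of the argument; the most delicate point is the first-moment lower bound on $|\LERW(u\to v)|$ when $u$ and $v$ lie in very different parts of the hierarchy, since the walk's time is dominated by loops inside the starting cluster that contribute nothing to the loop-erased length.
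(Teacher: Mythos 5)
Your high-level plan shares the paper's skeleton — a cluster decomposition of $G$, the linear Cheeger inequality as a by-product, and Wilson's algorithm driving the diameter estimate — but the key step that makes the upper bound rigorous is missing, and what you propose in its place does not work.

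The critical gap is the line
$\diam(\UST(G))\le 2\max_v|\LERW(v\to r)|$.
What is true is the deterministic tree fact $\diam(T)\le 2\max_v d_T(v,r)$, and, separately, that for each \emph{fixed} $v$ the marginal law of $d_T(v,r)$ is that of $|\LERW(v\to r)|$. These do not combine: the random variables $\bigl(d_T(v,r)\bigr)_{v\in V}$ are produced jointly by one run of Wilson's algorithm and are heavily dependent; there is no coupling under which they are simultaneously equal in law to $n$ independent or even fixed-vertex LERW lengths. Replacing the simultaneous maximum with a union bound would require tail bounds of the form $\pr(|\LERW(v\to r)|>A\sqrt n)=o(1/n)$, but the first- and second-moment (Paley--Zygmund/Markov) estimates you sketch only give $O(\eps)$-type tails, not the polynomially small tails a union bound over $n$ vertices demands. ``A peeling/concentration argument across dyadic branch-length scales'' names the difficulty rather than resolving it: getting uniform control of all $n$ branch lengths is essentially the whole problem, and it is exactly what the paper's invocation of \cref{lem:mns:lemma} from \cite{MNS19+} is designed to circumvent. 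In the paper, one conditions on $O(1)$ short $\UST$ paths $\varphi_i$, one per cluster, verifies that their union $W$ has bounded bubble sum $\bub_W(G)$ (\cref{lem:pbub}), and then \cref{lem:mns:lemma} directly controls $\diam(\UST(G/W))$ with the needed probability --- no per-vertex tail bound is ever proved.

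Two smaller points. First, your LERW-length lower bound is set up with $u,v$ uniform, which forces you into the ``delicate'' case you flag where $u$ and $v$ live in different clusters; the paper sidesteps this by choosing both endpoints $v_1^i,v_2^i$ inside the \emph{same} decomposition piece $V_i$ and adding a fictitious absorbing vertex $\rho$ (the Peres--Revelle device) so that the walk hits $\rho$ after a geometric number of steps of mean $\approx\sqrt n$, making the LERW length tractable and keeping the whole path inside $V_i$ with high probability. Second, your proposed proof of the linear Cheeger inequality via near-two-valued eigenfunctions is a genuinely different route from the paper's (which passes through the decomposition lemma and the Jerrum--Son--Vigoda--Tetali projection/restriction bound) and is closer in spirit to the higher-order Cheeger approach mentioned in \cref{rmrk:alternate}; that part may well be salvageable, but as written it is a sketch, and the ``essentially two-valued'' step would need a quantitative argument.
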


The main tool we use is a decomposition theorem (\cref{lem:main:deco}) which can be thought of as Szemer\'edi-type Regularity Lemma allowing to partition the vertices of $G$ into $O(1)$ sets such that the induced graph on each set satisfies a sufficiently strong isoperimetric inequality and such that the number of edges connecting two such sets is sufficiently small. This partition is then used to study the behavior of the loop-erased random walk on $G$ which in turn provides estimates on the $\UST$ via Wilson's algorithm (\cref{subsec:prelim}).

It turns out that one can get significant mileage in the study of the random walk using such a decomposition theorem. One such estimate, which we believe is of independent interest, is an improvement to Cheeger's inequality on graphs of linear minimal degree. This improved inequality shows that on such graphs the Cheeger constant and the spectral gap are comparable. Denote by $P$ the transition matrix of the simple random walk on $G$, and let $\pi(v)=\deg(v)/2|E(G)|$ denote its stationary distribution. Since $P$ is self-adjoint in $L^2(\pi)$ it has $n$ real eigenvalues in $[-1,1]$ denoted by \begin{equation*}
	1 = \lambda_1 \geq \lambda_2 \geq ... \geq \lambda_n \geq -1.
\end{equation*}
A classical highly useful inequality proved by Alon-Milman \cites{AM85,A86}, Lawler-Sokal \cite{LS88} and Jerrum-Sinclair \cite{JS89} known as \defn{Cheeger's inequality} relates the \defn{spectral gap} $\gamma(G):=1-\lambda_2$ of $P$ with its isoperimetric constant (also known as Cheeger's constant). More precisely, for a set of vertices $S$ of $G$ we denote its volume by $\Vol(S) = \sum_{v\in S}\deg(s)$ and its edge boundary by $\partial S = \{(u,v)\in E(G) \mid u\in S, v\notin S\}$. We define the Cheeger constant as
\begin{equation*}
	\Phi(G) := \min_{S, \pi(S) \leq 1/2}\frac{|\partial S|}{\Vol(S)}
\end{equation*}
Cheeger's inequality states that
\begin{equation}\label{eq:cheeger}
	\Phi(G)^2/2 \leq \gamma(G) \leq 2\Phi(G).
\end{equation}  
When $G$ is a simple graph of linear minimal degree we can improve the lower bound in Cheeger's inequality to match the order of the upper bound.

\begin{restatable}[]{theorem}{cheegermix}\label{thm:cheeger:is:everything}
	For any $\delta\in(0,1)$ there exists a constant $c(\delta)>0$ such that the following holds. Let $G=(V,E)$ be a simple graph on $n$ vertices with minimal degree at least $\delta n$ and Cheeger constant $\Phi(G)$, then 
	$$ \gamma(G) \geq c(\delta) \Phi(G) \, .$$
\end{restatable}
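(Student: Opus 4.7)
The plan is to use the decomposition theorem (\cref{lem:main:deco}) to reduce the inequality to Cheeger's inequality on an auxiliary weighted graph of constant size depending only on $\delta$, where the quadratic loss of standard Cheeger can be circumvented. First apply the decomposition to obtain a partition $V = V_1 \sqcup \cdots \sqcup V_k$ with $k = k(\delta)$, such that each induced subgraph $G[V_i]$ has Cheeger constant at least $c_1(\delta) > 0$ and the between-piece degree of every vertex is at most a small fraction $\eta(\delta)$ of its total degree. Form the weighted quotient $H$ on vertex set $\{V_1,\ldots,V_k\}$ with edge weight $e_G(V_i, V_j)$ between distinct pieces and self-loop weight $2\,|E(G[V_i])|$. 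By construction, $\pi_H(V_i) = \pi(V_i)$, $\Phi(H) \geq \Phi(G)$ (any cut of $H$ pulls back to a cut of $G$ with the same ratio), and for any function $\bar f$ constant on each piece, $\mathcal{E}_G(\bar f) = \mathcal{E}_H(\bar f)$ and $\Var_G(\bar f) = \Var_H(\bar f)$.

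Let $f$ be the $\lambda_2$-eigenfunction of the simple random walk on $G$, normalized so that $\Var(f) = 1$, and write $f = \bar f + g$, where $\bar f$ is the piecewise-$\pi$ average of $f$ on each $V_i$ and $g = f - \bar f$ is centered on each piece. Orthogonality in $L^2(\pi)$ yields $1 = \Var(\bar f) + \|g\|_\pi^2$. Cheeger's inequality inside each $G[V_i]$ shows its spectral gap is at least $c_1(\delta)^2/2$, so the internal-edge contribution to $\mathcal{E}_G(f)$ is at least $c_2(\delta)\,\|g\|_\pi^2$. For the between-edge contribution, the elementary inequality $(a+b)^2 \geq \tfrac{1}{2} a^2 - b^2$ applied with $a = \mu_i - \mu_j$ (where $\mu_i = \bar f|_{V_i}$) and $b = g(u) - g(v)$, summed over between-edges $(u,v) \in E(V_i, V_j)$, gives a contribution of at least $\tfrac{1}{2}\mathcal{E}_G(\bar f) - O(\eta)\,\|g\|_\pi^2$, the error being controlled by the small between-degree fraction.

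The main obstacle is the small-graph Cheeger inequality $\gamma(H) \geq c_3(\delta)\,\Phi(H)$ on the quotient. Since $H$ has $k = O_\delta(1)$ vertices with stationary probabilities $\pi_H(V_i) \asymp |V_i|/n$ bounded between constants depending only on $\delta$ (using the linear minimum degree of $G$ and the balancedness of the decomposition), any eigenfunction on $H$ attains at most $k$ distinct values. Cheeger's level-set argument applied to the positive part of the $\lambda_2$-eigenfunction of $H$ therefore reduces to a finite combinatorial sum of at most $k$ level-set boundary ratios, and a direct averaging argument can replace the Cauchy--Schwarz step to avoid the quadratic loss, giving the desired linear bound. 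Combining this with the previous estimates, $\mathcal{E}_G(\bar f) \geq c_3(\delta)\,\Phi(G)\,\Var(\bar f)$, hence $\mathcal{E}_G(f) \geq c(\delta)\,\Phi(G)(\Var(\bar f) + \|g\|_\pi^2) = c(\delta)\,\Phi(G)$, yielding $\gamma = \mathcal{E}_G(f) \geq c(\delta)\,\Phi(G)$.
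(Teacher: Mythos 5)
Your high-level strategy (decompose the graph, decompose the $\lambda_2$-eigenfunction $f = \bar f + g$ into a piecewise-constant part and a fluctuation, control the two contributions to the Dirichlet form separately) is in the same family as the paper's, which uses the projection/restriction chain decomposition of \cite{JSVT04} packaged as \cref{lem:dec:spec:gap}. But two steps in your sketch do not go through as written. First, the decomposition you request does not exist. You ask for a partition in which every $G[V_i]$ has Cheeger constant $\geq c_1(\delta)>0$ \emph{and} every vertex has between-piece degree at most an $\eta(\delta)$-fraction of its total degree, with $\eta$ small enough to beat $c_1(\delta)^2/2$ in your error estimate. \cref{lem:main:deco} (with any choice of $\beta$) does not give constant internal Cheeger constant — condition (3) only guarantees a spectral gap of order $\theta\beta/n^2$ inside each piece. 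The primary decomposition \cref{cl:prim:deco} does give a constant internal Cheeger constant, but makes no promise about between-degree fractions; indeed, if $G$ itself is a dense expander, every nontrivial partition forces $\Theta(n^2)$ cut edges, so some vertices have between-degree fraction bounded away from zero, and your error term $O(\eta)\|g\|_\pi^2$ is really $O(1)\|g\|_\pi^2$ and swamps the internal gain $c_2(\delta)\|g\|_\pi^2$. The paper never needs this condition: the cross-terms are handled by the JSVT theorem itself, whose proof is more delicate than the pointwise split $(a+b)^2 \geq a^2/2 - b^2$.

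Second, the crucial ``small-graph Cheeger inequality'' $\gamma(H) \geq c_3(\delta)\Phi(H)$ is asserted, not proved; ``a direct averaging argument can replace the Cauchy--Schwarz step'' does not tell the reader what to do. The claim is in fact true for a reversible chain on $k$ states with $\pi_{\min}\geq \delta_0$: sort the $k$ values $f_1\leq\cdots\leq f_k$ of the normalized eigenfunction, pick the largest gap $g_{\ell^*}\geq \sqrt{2}/k^2$, threshold there to get a set $S$ with $\pi(S)\geq\delta_0$, and observe that every crossing edge contributes at least $g_{\ell^*}^2 Q_{ij}$ to the Dirichlet form, so $Q(\partial S)\leq \gamma k^4/2$ and $\Phi\leq \gamma k^4/(2\delta_0)$. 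This is a real step and must be supplied. Note that the paper sidesteps the quotient-Cheeger question entirely: instead of comparing $\gamma(H)$ with $\Phi(H)$, it shows that the coarse graph $H(\deco,\,r\delta^2n^2/2k^2)$ is connected when $r=\Phi(G)$ (a two-line counting argument from the definition of $\Phi$), and then applies the path method (\cref{lem:path:method}) to the projection chain, which converts connectivity plus an edge-weight lower bound directly into a spectral gap lower bound — no Cheeger-type argument on the small graph is needed. If you replace your decomposition by \cref{cl:prim:deco}, keep track of cross-terms via \cref{thm:jsvt} rather than the crude pointwise split, and replace your small-graph Cheeger claim with the path-method bound, your outline becomes essentially the paper's proof.
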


\begin{remark} Our proof gives $c(\delta) = \delta^{19}/2^{34}$ but we have not tried to optimize this constant.
\end{remark}

\begin{remark}\label{rmrk:alternate}
	After posting this paper we learned from Majid Farhadi, Suprovat Ghoshal, Anand Louis, and Prasad Tetali of an alternate proof of \cref{thm:cheeger:is:everything}, which gives $c(\delta) = \Omega(\delta)$. Since the proofs are completely different we believe there is value in presenting both. We emphasize that our proof of \cref{thm:cheeger:is:everything} is just a byproduct of the tools we develop to prove our main result \cref{thm:main} and is also quite short given these tools. It is presented in \cref{sec:cheegereverything}.

	The alternate proof follows from Theorem 2 of \cite{KLLGT13}. In our notation, it states that there exists a universal constant $C>0$ such that \begin{equation}\label{eq:kklgt}
	\Phi(G) \leq \frac{Ck\gamma(G)}{\sqrt{1-\lambda_k}}.
	\end{equation}
	To obtain \cref{thm:cheeger:is:everything} from \eqref{eq:kklgt}, observe that when the minimal degree of $G$ is at least $\delta n$, each diagonal term of $P^2$ is bounded from above by $1/(\delta n)$ and therefore the trace of $P^2$ is bounded from above by $1/\delta$. Since the trace of $P^2$ equals the sum of squares of the eigenvalues of $P$, we have that at most $4/\delta$ eigenvalues of $P$ are larger than $1/2$. Hence, $\lambda_{\ceil{4/\delta}} \leq 1/2$. Plugging this into \eqref{eq:kklgt} and rearranging gives
	\begin{equation*}
	\gamma(G) \geq c'\delta\Phi(G)
	\end{equation*}
	for some  $c'>0$.
\end{remark}

\subsection{Preliminaries}\label{subsec:prelim}

For a finite graph $G = (V,E)$ and a vertex $v\in V$ we denote by $\deg_G(v)$ its degree. When $U\subseteq V$ we will write $\deg_G(v,U)$ for the number of edges between $v$ and $U$. We will sometimes omit the subscript when it is obvious to which $G$ we refer. A \defn{network} $(G,w)$ is a connected graph $G=(V,E)$ endowed with a non-negative function $w:E\to [0,\infty)$ on its edges. The \defn{simple random walk} $\{X_t\}_{t=0}^{\infty}$ on $G$ is the Markov chain on the state space $V$ that at each step moves along a uniformly chosen edge incident to it. Similarly, the simple random walk on a network $(G,w)$ is the Markov chain such that the transition probability from $v$ to $u$ is proportional to $w\left(\{v,u\}\right)$. In order to avoid issues of parity, we will sometimes consider the \defn{lazy random walk}. Formally, at each step, with probability $1/2$ the walker stays put and otherwise chooses a neighbour uniformly (or proportionally to $w(\{v,\cdot\})$. We will often consider random walks with different starting distributions. When $\mu$ is a probability measure on $V$, we will use the notation $\pr_\mu$ for the probability measure conditioned on $X_0 \sim \mu$. We will also use $\pr_v$ for the walk conditioned on $X_0 = v$. Also, for a non negative integer $t\geq 0$ and two vertices $v,u\in V$ we write $\vect{p}^t(v,u)$ for $\pr_v(X_t = u)$. 
For any $a,b\in[0,\infty)$, we write $X[a,b]$ for $\langle X_i\rangle_{i\in I}$ where $I=\left[\ceil{a},\floor{b}\right]\cap\NN$. Similarly, we write $X[a,b)$ if we wish to exclude $b$ from $I$.

We will frequently use some facts about the {\em mixing time} of the random walk on $G$ which we now define. The \defn{total variation distance} between two probability measures $\mu,\nu$ on $V$ is 
\begin{equation*}
	\dtv(\mu,\nu) := \frac{1}{2}\sum_{u\in V}|\mu(v)-\nu(v)|.
\end{equation*} 
For every $\eps \in (0,1/2)$, the \defn{$\eps$-mixing time} of $G$ is defined by
\begin{equation*}
	\tmix^G(\eps) := \max_{v\in V}\min\left\{t\geq 0 : \|\vect{p}^t(v,\cdot) - \pi(\cdot) \|_\tv < \eps \right\},
\end{equation*}
where $\pi(v) = \deg(V)/2|E|$, the stationary distribution of the random walk on $G$. To avoid issues of periodicity we emphasize that in this paper the quantity $\tmix^G(\eps)$ is defined \emph{only} for the lazy random walk. We liberally vary the choice of $\eps$ throughout the proof; this changes the mixing time by at most a multiplicative constant. Indeed, for every $\eps<1/4$ and any integer, we have (see, \cite{LPW2}*{Eq 4.34 and Eq 4.32})
\begin{equation}\label{eq:exp:mix}
	\tmix^G(\eps) \leq \log_2(\eps^{-1})\tmix(1/4) \qquad \hbox{and} \qquad  \tmix^G(\eps^k) \leq k\tmix(\eps/2).
\end{equation}
\\

The uniform spanning tree ($\UST$) of $G$ is the uniform measure over the set of all spanning trees of $G$. More generally, when $(G,w)$ is a finite network, we denote by $\UST(G)$ the weighted uniform spanning tree. That is, the probability measure supported on spanning trees of $G$ that assigns to each such tree $T$ a measure proportional to $\prod_{e\in T}w(e)$. We briefly describe here some useful properties of the $\UST$ involving sampling, conditioning and stochastic domination and refer the reader to \cite[Chapter 4]{LP} for a comprehensive overview.

Our analysis of the $\UST$ will rely on \defn{Wilson's algorithm} \cite{Wil96} for efficiently sampling the $\UST$. This popular algorithm is frequently used  not just to sample but rather to prove theorems about the $\UST$, see \cite{LP}.  
Let $G=(V,E)$ be a finite connected graph. A walk of length $L$ on $G$ is a sequence of vertices $(X_0,\ldots,X_L)$ such that $(X_{i},X_{i+1}) \in E$ for every $0 \leq i < L$. 
Given such a walk $X$, its \defn{loop-erasure} $\LE(X)$ is a sequence of vertices defined as follows. We put $\LE(X)_0 = X_0$ and inductively, for every $i>0$ and given $\LE(X)[0,i-1]$, define $s_i := \max\{t \leq L \mid X_t = \LE(X)_{i-1}\}$. If $s_i = L$, the loop erased random walk of $X$ is $(\LE(X)_0,\ldots,\LE(X)_{i-1})$. Otherwise, let $\LE(X)_i = X_{s_i + 1}$. In words, we walk along $(X_0,\ldots,X_L)$ and erase the loops as they are formed. Given two vertices $v,u$ the \defn{loop erased random walk} from $v$ to $u$ is defined to be $\LE(X)$ where $X$ is the simple random walk started at $v$ and terminated upon when hitting $u$. In a similar fashion we define the loop erased random walk from a vertex $v$ to a subset of vertices $U$. Wilson's algorithm works as follows. Choose any ordering $(v_1,\ldots,v_n)$ of the vertices of $G$ and let $T_1$ be the empty tree containing $v_1$ and no edges. At each step $i>1$, run a loop erased random walk from $v_i$ to $T_{i-1}$ let $T_i$ be the union of this loop erased random walk and $T_{i-1}$. This process terminates after going through all vertices and results a spanning tree $T_n$. A remarkable theorem of Wilson \cite{Wil96}, that we use throughout this paper, states that $T_n$ is distributed as $\UST(G)$.\\

Next, it is very simple to prove that conditioning on the existence or absence of edges in the $\UST$ results in a $\UST$ on the graph obtained from $G$ by contracting or erasing those edges, respectively.
\begin{lemma}\label{lem:contract} \cite[Section 4.2]{LP}
	Let $(G,w)$ be a network and let $e$ be an edge of $G$. The $\UST$ of $(G,w)$ conditioned to contain $e$ is distributed as the union of $e$ with the $\UST$ of the network obtained from $G$ by contracting the edge $e$ to a single vertex.
\end{lemma}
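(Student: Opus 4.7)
The plan is to exhibit an explicit weight-preserving bijection between spanning trees of $G$ containing the edge $e$ and spanning trees of the contracted network $G/e$. Writing $e=\{x,y\}$, let $G/e$ be the multigraph obtained from $G$ by identifying $x$ and $y$ into a single vertex, deleting $e$, and inheriting the weight function $w$ on all remaining edges; parallel edges are kept as distinct weighted edges and any self-loops may be discarded since no spanning tree uses them. Let $\mathcal{T}_e$ denote the set of spanning trees of $G$ containing $e$, and let $\mathcal{T}'$ denote the set of spanning trees of $G/e$.

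First I would define $\Phi:\mathcal{T}_e \to \mathcal{T}'$ by $\Phi(T) = T \sm \{e\}$, viewed as a subgraph of $G/e$. Since $T$ is a tree containing $e$, removing $e$ splits $T$ into two subtrees spanning vertex sets $V_x \ni x$ and $V_y \ni y$; identifying $x$ with $y$ rejoins these into a connected acyclic subgraph spanning $V(G/e)$, hence a spanning tree. The inverse $\Psi:\mathcal{T}' \to \mathcal{T}_e$ sends $T' \mapsto T' \cup \{e\}$, where the edges of $T'$ are interpreted as the corresponding edges of $G$; the result is a spanning tree of $G$ because it has the right number of edges, is connected, and any putative cycle would produce a cycle in $T'$ after contracting $e$. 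The two compositions $\Phi\circ\Psi$ and $\Psi\circ\Phi$ are clearly the identity maps.

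The final and key step is the weight identity $\prod_{f \in T} w(f) = w(e) \prod_{f \in \Phi(T)} w(f)$, which is immediate from the definition of $\Phi$. Because the factor $w(e)$ is common to every tree in $\mathcal{T}_e$, it cancels between numerator and normalizer when one writes the conditional distribution of $\UST(G)$ given $e \in \UST(G)$, leaving exactly the $\UST$ measure of $G/e$ pushed forward through $\Phi^{-1}$; unioning with $e$ then recovers $T$. I do not foresee a real obstacle; the one point requiring care is the bookkeeping around parallel edges produced by contraction, which is resolved once and for all by insisting that each parallel copy be a distinct weighted edge in its own right, so that the bijection and weight identity both go through verbatim.
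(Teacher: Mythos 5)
Your bijection argument is correct and is essentially the standard proof of this fact; the paper itself does not prove \cref{lem:contract} but cites it to \cite[Section~4.2]{LP}, where the argument is exactly the weight-preserving bijection $T\mapsto T\sm\{e\}$ between spanning trees of $G$ containing $e$ and spanning trees of $G/e$ that you describe. The one place your write-up is a little terse is the acyclicity of $\Psi(T')=T'\cup\{e\}$: a cycle through $e$ would leave a nontrivial $x$--$y$ path in $T'$, which contracts to a closed walk and hence a cycle in $T'$, while a cycle avoiding $e$ contracts to a closed walk in $T'$ as well; either way you contradict that $T'$ is a tree, so the map is well defined and the proof goes through.
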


Lastly, we recall a few highly useful corollaries  to a result of Feder and Mihail \cite{FM92}. Given two probability measures $\mu_1$ and $\mu_2$ on $2^E$, we say that $\mu_1$ is \defn{stochastically dominated} by $\mu_2$ if there exists a probability measure $\mu$ on $2^E\times 2^E$ with marginals $\mu_1$ and $\mu_2$ which is supported on
\begin{equation*}
	\{(T_1, T_2) \in 2^E\times 2^E \mid T_1\subseteq T_2\}.
\end{equation*} 

\begin{lemma}\label{lem:neg:ass} \cite[Lemma 10.3]{LP}
	Let $G$ be a connected subgraph of a finite connected graph $H$. 
	Then, $\UST(H)\cap E(G)$ is stochastically dominated by $\UST(G)$ when both are viewed as probability measures on $2^{E(G)}$.
\end{lemma}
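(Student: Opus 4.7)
The plan is to deduce this from the negative association theorem of Feder and Mihail \cite{FM92} for the $\UST$ measure, which in a convenient form says that for any finite connected graph $H'$, any non-bridge edge $e\in E(H')$, and any increasing event $\mathcal{A}\subseteq 2^{E(H')\setminus\{e\}}$,
\[\pr_{\UST(H')}(\mathcal{A}) \;\le\; \pr_{\UST(H')}(\mathcal{A}\mid e\notin T) \;=\; \pr_{\UST(H'\setminus e)}(\mathcal{A}),\]
where the equality is the deletion dual of \cref{lem:contract}. Thus deleting a non-bridge edge can only raise probabilities of increasing events on the surviving edges, and Strassen's theorem will then convert a chain of such inequalities into the desired monotone coupling.

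Concretely I would iteratively delete non-bridge edges of $E(H)\setminus E(G)$: set $H_0=H$, and at step $i$ pick some $f\in(E(H)\setminus E(G))\cap E(H_{i-1})$ that is not a bridge of $H_{i-1}$ and put $H_i=H_{i-1}\setminus\{f\}$. I would stop at a graph $H^*$ in which every remaining edge of $(E(H)\setminus E(G))\cap E(H^*)$ is a bridge of $H^*$; call this set of leftover bridges $B$, so that $E(H^*)=E(G)\cup B$. Chaining the Feder--Mihail inequalities yields $\pr_{\UST(H)}(\mathcal{A})\le\pr_{\UST(H^*)}(\mathcal{A})$ for every increasing $\mathcal{A}\subseteq 2^{E(G)}$.

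The next step is to show that $\UST(H^*)\cap E(G)\stackrel{d}{=}\UST(G)$. Since each edge of $B$ is a bridge of $H^*$ it lies in every spanning tree, so iterated application of \cref{lem:contract} gives $\UST(H^*)=B\cup\UST(H^*/B)$. Contracting $B$ partitions $V(H)$ into super-vertices: no super-vertex can be disjoint from $V(G)$ since that would be a connected component of $H^*$ avoiding $V(G)$, and no super-vertex can contain two distinct vertices of $V(G)$, because a $B$-path between them in $H^*$ together with a path between them in the connected subgraph $G$ would, upon trimming to the first shared vertex, produce a simple cycle of $H^*$ through a $B$-edge, contradicting the bridge property (here I use that $E(G)\cap B=\varnothing$ so the two paths are edge-disjoint). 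Hence super-vertices biject with $V(G)$, the edges surviving contraction are exactly $E(G)$, and $H^*/B\cong G$; thus $\pr_{\UST(H^*)}(\mathcal{A})=\pr_{\UST(G)}(\mathcal{A})$ and the result follows via Strassen.

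The only genuinely delicate point will be the identification $H^*/B\cong G$, which essentially uses the connectedness of $G$ to rule out bridge-containing cycles in $H^*$; the rest is standard manipulation of negative association and Strassen's theorem. In the easy case when $G$ is a spanning subgraph of $H$ the set $B$ is automatically empty and the argument reduces to simple iterated deletion.
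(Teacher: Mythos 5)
Your argument is correct and follows the same route as the cited reference \cite[Lemma~10.3]{LP}: reduce via the Feder--Mihail negative-association theorem and the deletion identity to a chain of single-edge deletions of non-bridge edges outside $E(G)$, invoke Strassen's theorem, and handle the residual bridges $B$ by noting that bridges are a.s.\ in the tree and that $H^*/B\cong G$ (which you justify carefully using connectivity of $G$ and edge-disjointness of the $B$-path and $G$-path to forbid a cycle through a bridge). The paper provides no proof of its own, only this citation, so there is nothing further to compare.
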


This lemma can be further generalized to our needs. We say that a network $(G,w)$ is a subnetwork of $(H,w')$ if $V(G) \subseteq V(H)$ and for every edge $(v,u)$ with $w(v,u) \neq 0$ we have $w(v,u) = w'(v,u)$. The same proof of \cite[Lemma 10.3]{LP} yields a more general statement.

\begin{lemma}\label{lem:neg:ass:net}
	Let $(G,w)$ be a subnetwork of a finite network $(H,w')$. Then, $\UST(H)\cap E(G)$ is stochastically dominated by $\UST(G)$ when both are viewed as probability measures on $2^{E(G)}$.
\end{lemma}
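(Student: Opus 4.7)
The plan is to reduce this to the unweighted Lemma \ref{lem:neg:ass} via a standard multigraph approximation. The weighted UST measure has probabilities $\pr(T = T_0) = \prod_{e \in T_0} w(e)/Z(w)$, which depend continuously on the weights, and stochastic domination between measures on the finite ground set $2^{E(G)}$ is preserved under weak limits. So it suffices to treat the case where $w$ and $w'$ take positive rational values on their supports, and then by clearing denominators I may assume all weights are positive integers.

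Next I would construct multigraphs $\tilde H$ and $\tilde G$ from $H$ and $G$ by replacing each edge $e$ of positive weight by $w'(e)$ (respectively $w(e)$) parallel copies. Because $w = w'$ on the support of $w$, $\tilde G$ sits naturally as a subgraph of $\tilde H$ on the same vertex set. Let $\phi$ denote the projection sending any set of multi-edges to the set of underlying edges of $H$ having at least one copy in it. A spanning tree of $\tilde H$ uses at most one copy from each bundle (else we have a $2$-cycle), so $\phi(T)$ is always a spanning tree of $H$, and a direct count shows that each spanning tree $T_0$ of $H$ has exactly $\prod_{e\in T_0} w'(e)$ preimages under $\phi$. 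Consequently the pushforward of $\UST(\tilde H)$ under $\phi$ has the distribution $\UST(H,w')$, with the analogous identity for $\tilde G$. Applying Lemma \ref{lem:neg:ass} to $\tilde G \subseteq \tilde H$ provides a coupling $(T_1,T_2)$ of $\UST(\tilde H) \cap E(\tilde G)$ and $\UST(\tilde G)$ with $T_1\subseteq T_2$ almost surely. Since $\phi$ is monotone ($A\subseteq B$ implies $\phi(A)\subseteq \phi(B)$), pushing the coupling forward by $\phi$ edge-by-edge yields the desired coupling of $\UST(H,w')\cap E(G)$ and $\UST(G,w)$.

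The only real subtlety is checking that Lemma \ref{lem:neg:ass}, stated for simple graphs, actually applies to multigraphs. This is immediate from its proof, which ultimately rests on the negative association (after Feder--Mihail) of the uniform measure on bases of the graphic matroid, a property completely indifferent to parallel edges and loops. Alternatively, as the excerpt itself hints, one can reread the proof of \cite[Lemma 10.3]{LP} while carrying the edge weights throughout; the Feder--Mihail input extends to weighted uniform spanning trees verbatim. I expect the multigraph reduction to be the cleanest route, and this small bookkeeping check is really the only hurdle.
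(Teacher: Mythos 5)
Your proof is correct, but it takes a genuinely different route from the paper's. The paper disposes of this lemma in one line: rerun the proof of \cite[Lemma~10.3]{LP} carrying the edge weights throughout, noting that the Feder--Mihail negative-association input is already stated for weighted graphic matroids. You instead reduce the weighted statement to the unweighted one (\cref{lem:neg:ass}) by first passing to rational weights via a continuity/weak-limit argument (stochastic domination is a finite collection of closed inequalities $\mu(A)\le\nu(A)$ over up-sets $A$, so it survives limits), then clearing denominators, expanding each edge into $w'(e)$ parallel copies, and pushing the resulting coupling forward along the bundle-collapsing map $\phi$. The key identity you implicitly use, $\phi\bigl(T_1 \cap E(\tilde G)\bigr) = \phi(T_1)\cap E(G)$, holds precisely because $w=w'$ on the support of $w$, so $\tilde G$ contains \emph{all} copies of each of its edges in $\tilde H$; together with monotonicity of $\phi$ this makes the pushforward of the multigraph coupling a valid coupling witnessing domination of $\UST(H,w')\cap E(G)$ by $\UST(G,w)$. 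One small slip: you say $\tilde G$ and $\tilde H$ are ``on the same vertex set,'' but the definition of subnetwork allows $V(G)\subsetneq V(H)$; this is harmless since \cref{lem:neg:ass} already allows a proper subgraph. The tradeoff between the two approaches: the paper's is shorter given that \cite{LP} proves the weighted case anyway, while yours is more self-contained if one only wants to accept the unweighted (multigraph) statement as the black box -- at the cost of the limiting argument and the (correct, and easy) observation that \cref{lem:neg:ass} is insensitive to parallel edges.
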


 Given a network $G$ and a subset of vertices $A$ we write $G/A$ for the network obtained from $G$ by contracting the vertices of $A$ to a single vertex and keeping all edges. The following is well known and can be obtained by a similar argument to the proof of \cite[Lemma 10.3]{LP}.

\begin{lemma}\label{lem:neg:ass:contract}
	Let $(G,w)$ be a finite network and let $A\subseteq B$ be two sets of vertices of $G$. Then, $\UST(G/A)$ stochastically dominates $\UST(G/B)$.
\end{lemma}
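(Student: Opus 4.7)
The plan is to deduce \cref{lem:neg:ass:contract} from \cref{lem:neg:ass:net} via a heavy-weight limiting argument. Let $a^*$ denote the contracted vertex of $G/A$ representing $A$, and set $B' := \{a^*\} \cup (B \setminus A)$, so that under the natural identification $(G/A)/B' = G/B$. For each $\lambda > 0$, I construct an auxiliary network $H_\lambda$ on the vertex set $V(G/A)$ by starting from $(G/A, w)$ and, for every $v \in B \setminus A$, adding an extra edge $e_v = \{v, a^*\}$ of weight $\lambda$; let $E_\lambda := \{e_v : v \in B \setminus A\}$.

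By construction $(G/A, w)$ is a subnetwork of $H_\lambda$, so \cref{lem:neg:ass:net} yields that $\UST(H_\lambda) \cap E(G/A)$ is stochastically dominated by $\UST(G/A)$ on the finite Boolean lattice $2^{E(G/A)}$. On the other hand, $E_\lambda$ is a star (in particular a tree) with centre $a^*$ and leaves $B \setminus A$, so iterating \cref{lem:contract} shows that conditionally on the event $\{E_\lambda \subseteq \UST(H_\lambda)\}$, the set $\UST(H_\lambda) \setminus E_\lambda$ has the law of $\UST(H_\lambda / B') = \UST(G/B)$, since contracting $B'$ in $H_\lambda$ turns every edge of $E_\lambda$ into a self-loop and identifies all of $B$ into one vertex.

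It then remains to verify that $\pr(E_\lambda \subseteq \UST(H_\lambda)) \to 1$ as $\lambda \to \infty$. Writing the partition function of $H_\lambda$ as a polynomial in $\lambda$, its degree is exactly $|B \setminus A|$ (since $E_\lambda$ is already a tree, no spanning tree of $H_\lambda$ can contain more than $|B \setminus A|$ edges of $E_\lambda$ without forming a cycle), and its leading coefficient equals the partition function of $G/B$, contributed precisely by spanning trees containing all of $E_\lambda$. Hence the conditional probability tends to $1$, and consequently $\UST(H_\lambda) \cap E(G/A)$ converges in total variation to $\UST(G/B)$ as $\lambda \to \infty$.

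Since stochastic domination on a finite Boolean lattice is preserved under weak limits, passing $\lambda \to \infty$ in the domination from the second paragraph yields the desired conclusion. The only genuinely new ingredient is the choice of auxiliary network: the star $E_\lambda$ is engineered so that $(G/A,w)$ remains a subnetwork of $H_\lambda$ (enabling \cref{lem:neg:ass:net}) while simultaneously forcing $E_\lambda$ into the tree implements exactly the further contraction of $B \setminus A$ into $a^*$. Once $H_\lambda$ is in place, every other step is a short partition-function computation.
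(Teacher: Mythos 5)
Your proof is correct, and it takes a genuinely different route from the one the paper intends. The paper points to the Feder--Mihail argument behind \cite[Lemma~10.3]{LP} and asks the reader to rerun that induction directly in the contraction setting; you instead \emph{reduce} \cref{lem:neg:ass:contract} to \cref{lem:neg:ass:net} by realizing the contraction $G/A \rightsquigarrow G/B$ as a weight$\to\infty$ limit. The mechanism is sound: $(G/A,w)$ sits as a subnetwork of $H_\lambda$, so $\UST(H_\lambda)\cap E(G/A) \preceq \UST(G/A)$ for every $\lambda$; the partition-function computation correctly shows that $\pr(E_\lambda\subseteq\UST(H_\lambda))\to 1$ (the leading coefficient is the partition function of $G/B$, which is positive since $G$ is connected and hence $E_\lambda$ extends to a spanning tree); iterating \cref{lem:contract} along the acyclic set $E_\lambda$ identifies the conditional law with $\UST(G/B)$; and stochastic domination on a finite lattice passes to the total-variation limit. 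What this approach buys is economy --- it re-uses \cref{lem:neg:ass:net} as a black box and avoids repeating a matroid induction --- at the cost of a one-paragraph limiting argument.

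One technical point deserves a remark. When $v\in B\setminus A$ is already adjacent to $a^*$ in $G/A$, your new edge $e_v$ is parallel to an existing edge, so $H_\lambda$ is a \emph{multigraph}, whereas the paper's definition of ``subnetwork'' is phrased for weighted simple graphs (one weight per unordered pair). This is not a real obstruction --- the Feder--Mihail negative association holds for graphic matroids of multigraphs, so \cref{lem:neg:ass:net} extends --- but as written the appeal to \cref{lem:neg:ass:net} is slightly out of scope. The cleanest fix within the simple-graph framework is to route $E_\lambda$ through a single fresh hub: add one new vertex $\rho$ and edges $\{\rho,a^*\}$ and $\{\rho,v\}$ for $v\in B\setminus A$, all of weight $\lambda$. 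Then $(G/A,w)$ is a bona fide subnetwork of $H_\lambda$ (no parallel edges are created because $\rho$ is new), contracting the star identifies $\{\rho\}\cup\{a^*\}\cup(B\setminus A)$ as required, and the rest of your argument goes through verbatim.
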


\subsection{Proof outline and organization}

It is easier to bound the diameter of the $\UST$ after conditioning on a long path in it. Indeed, a key lemma from \cite{MNS19+} (see \cref{lem:mns:lemma}) roughly states that if a vertex set $W\subset V$ is sufficiently spread out in the sense that the random walk is unlikely to avoid it, then one can upper bound the probability that the diameter of $\UST(G/W)$ is much larger than $|W|$, where $G/W$ is the graph obtained from $G$ by identifying $W$ to a single vertex. When $G$, say, is a regular expander (or any other ``high dimensional'' graph) the approach in \cite{MNS19+} is to take $W$ to be the vertices on the unique path in $\UST(G)$ between two independently drawn uniform vertices of $G$. The expansion property is then used to show that this set has size $\Theta(\sqrt{n})$ and is sufficiently spread out, so \cref{lem:mns:lemma} implies that $\UST(G/W)$ has diameter $\Theta(\sqrt{n})$.

The high level approach in this paper is to use our decomposition theorem (\cref{lem:main:deco}, proved in \cref{sec:dec}) and partition the graph into $O(1)$ sets so that with high probability the $\UST$ path between two random vertices in each set remains within the set, is of size $\Theta(\sqrt{n})$ and is sufficiently spread out within the set. Formalizing and proving this is performed in \cref{sec:rwdeco}. In \cref{sec:proofmainthm} we take the union of these $O(1)$ paths to be our set $W$ and apply \cref{lem:mns:lemma} from \cite{MNS19+} to obtain that $\UST(G/W)$ has diameter roughly of order $\sqrt{n}$ from which we deduce the desired upper bound on the diameter of $\UST(G)$.

\section{Decomposition of linear minimal degree graphs}\label{sec:dec}
In this section we prove that any finite connected graph $G=(V,E)$ on $n$ vertices with linear minimal degree can be decomposed into $k$ sets, each of them linear in the number of vertices, such that a random walk typically mixes inside every such set before leaving it. Hence, when considering short times, roughly $\sqrt{n}$ steps of the walk, a random walk on the graph can be approximated well by a random walk on one of its sets in this decomposition. We will denote a partition of $V$ by $\deco$ and sometimes more explicitly by $V=V_1\sqcup\ldots\sqcup V_k$. We write $[k]$ for the set $\{1,\ldots,k\}$.	

\begin{definition}\label{def:good:deco} Let $\eps\in(0,1),\delta\in(0,1]$ and $\beta>0$ be fixed and let $G=(V,E)$ be a graph on $n$ vertices with minimal degree at least $\delta n$. We say that that a partition $V=V_1 \sqcup \ldots \sqcup V_k$ is an \defn{$(\eps,\delta,\beta)$-good decomposition}
	if there exists some $\theta \in [\eps^{11\cdot 2^{2/\delta}},\eps]$ such that the following conditions are satisfied.
	\begin{enumerate}
		\item The number of sets in the decomposition, denoted by $k$, satisfies $k \leq 2/\delta$.
		\item For every $i \in[k]$ we have $|V_i| \geq \frac{\delta n}{2}$.
		\item For every $i \in [k]$, the spectral gap of $G[V_i]$ is at least $\frac{\delta^{15}\theta \beta}{2^{31}n^2}$.
		\item For every $i\in[k]$ and each $v\in V_i$ we have that $\deg(v,V_i) \geq \frac{\delta^4 n}{40}$. 
		\item For every $i\in[k]$  we have $|E(V_i,V\setminus V_i)| \leq \eps^9\theta^2\beta.$
	\end{enumerate}
\end{definition}

\noindent The majority of this section is devoted to proving the following decomposition lemma which will be key in the proof of \cref{thm:main}.

\begin{lemma}\label{lem:main:deco}
	For every  $\delta>0$, there exists a constant $c=c(\delta)>0$ such that the following holds. For every $\eps\in (0,c)$, every $\beta\in(0,240n^2/(\eps\delta^4))$ and any simple graph $G=(V,E)$ on $n$ vertices with minimal degree at least $\delta n$ there exists an $(\eps,\delta,\beta)$-good decomposition of $G$.
\end{lemma}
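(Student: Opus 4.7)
The plan is to prove \cref{lem:main:deco} by an iterative refinement procedure reminiscent of Szemer\'edi's regularity lemma. I would maintain a partition $\deco = V_1 \sqcup \cdots \sqcup V_k$ of $V$, starting from the trivial partition $\deco_0 = \{V\}$, together with a companion parameter $\theta$ eventually chosen in $[\eps^{11 \cdot 2^{2/\delta}}, \eps]$. At each step, if some $V_i$ violates condition~3, that is $\gamma(G[V_i]) < \delta^{15}\theta\beta/(2^{31}n^2)$, I would invoke Cheeger's inequality \eqref{eq:cheeger} on $G[V_i]$ to locate a set $A \subset V_i$ with $\pi_{V_i}(A) \leq 1/2$ and $|\partial_{V_i} A|/\Vol_{G[V_i]}(A) \leq \sqrt{2\gamma(G[V_i])}$, and then split $V_i$ into $A$ and $V_i \setminus A$. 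Since $\Vol_{G[V_i]}(A) \leq n^2/2$, the new cut has size at most $n\sqrt{\delta^{15}\theta\beta/2^{32}}$, which is small compared to the condition~5 threshold $\eps^9\theta^2\beta$ provided $\theta$ is not too small.

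Two auxiliary steps must accompany the splitting. First, after each split I would check condition~4: if some $v \in V_i$ has $\deg(v, V_i) < \delta^4 n / 40$, I would reassign $v$ to the part where it has the most neighbors. The hypothesis $\deg(v) \geq \delta n$ together with $|\deco| \leq 2/\delta$ ensures by pigeonhole that such a part contains at least $\delta^2 n / 2$ neighbors of $v$, comfortably exceeding $\delta^4 n / 40$. Second, to preserve condition~2, I would handle unbalanced Cheeger cuts, where $|A|$ is much smaller than $\delta n / 2$, by absorbing the small side into a neighboring part; the minimum-degree hypothesis forces vertices of a small $A$ to have many external neighbors, so this absorption does not leave isolated vertices or destroy the spectral gap of the remaining set.

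The crux of the argument is choosing $\theta$ so that conditions~3 and~5 hold simultaneously at the end. A Cheeger split yields a cut of size polynomial in $\sqrt{\theta}$, while the condition~5 threshold is quadratic in $\theta$; a balancing recursion of the form $\theta_{j+1} \approx \theta_j^{c}$ for a suitable exponent $c \in (0,1)$ depending on $n$, $\beta$, and $\eps$ therefore degrades $\theta$ by an exponential factor per iteration. Since the algorithm terminates after at most $2/\delta$ splits (enforced by condition~1 and the size requirement of condition~2), after that many iterations the parameter $\theta$ lands in the advertised range, matching the tower exponent $11 \cdot 2^{2/\delta}$ in its lower bound. The restriction $\beta < 240 n^2/(\eps \delta^4)$ should be exactly what keeps this recursion closed up.

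The main obstacle I anticipate is coordinating the three moving parts -- splitting, vertex reassignment, and the $\theta$-update -- so that all five conditions survive each refinement: vertex reassignment perturbs the spectral gap and the boundary of every part it touches, and the unbalanced-cut case requires delicate use of the minimum-degree hypothesis to prevent the algorithm from producing tiny pieces. The various powers of $\delta$ appearing in the definition of a good decomposition (most visibly $\delta^{15}$ in condition~3 and $\delta^4$ in condition~4) presumably reflect exactly the slack needed in these estimates to absorb the errors introduced at each step of the iteration.
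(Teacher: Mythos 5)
Your approach is genuinely different from the paper's, and it has a gap that I don't think can be repaired along the lines you sketch.

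The core difficulty is termination and bookkeeping. You propose a single top-down phase: whenever condition~3 fails, find a near-optimal Cheeger cut of $G[V_i]$ and split. But after splitting along a Cheeger cut, both new pieces can \emph{still} have small spectral gap (think of a chain of three near-cliques: cutting off the first clique leaves the other two still weakly joined), so there is no reason this process stops in $O(1)$ steps. Your claim that the algorithm terminates after at most $2/\delta$ splits is circular: condition~2 only gives that bound \emph{if} you manage to keep every piece of size $\geq \delta n/2$, but nothing in your procedure guarantees that, and absorbing ``small'' sides into neighbors is not well-defined when the Cheeger cut is genuinely balanced yet still leaves small-gap fragments. There is also no a priori bound on the \emph{total} number of cut edges you accumulate across all splits, whereas condition~5 must be verified for the final partition.

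The paper circumvents exactly this by a two-phase argument you are missing. Phase one (\cref{cl:prim:deco}) splits using the purely combinatorial criterion $|E(W_1,W_2)|\le \frac{\delta^3}{20}|W_1||W_2|$, which has the crucial property that summing $|W_1||W_2|$ over all refinement steps is bounded by $\binom{n}{2}$, giving a global $O(\delta^3 n^2)$ bound on the total cut; from this one extracts that the surviving ``good'' pieces have Cheeger constant $\Omega(\delta^5)$, hence constant-order spectral gap. Phase two (\cref{lem:dec:coarse}) goes in the \emph{opposite} direction: it \emph{merges} pieces whose mutual cut is too large so as to enforce condition~5, and the punchline is \cref{lem:dec:spec:gap}, an application of the projection/restriction chain theorem of \cite{JSVT04}, which certifies that merged pieces still have a spectral gap bounded below in terms of the pieces' gaps, the minimum internal degree, and the connectivity of the merge graph $H(\deco_i,\theta\beta)$. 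Your proposal never merges and never invokes anything like \cref{thm:jsvt}/\cref{lem:dec:spec:gap}, yet condition~3 for merged sets is where essentially all of the work is. Finally, the $\theta$-recursion $\theta_m = (\alpha/\ell^2)\theta_{m-1}^2$ in the paper is driven by the merging thresholds, not by the size of a Cheeger cut; for small $\beta$ your balancing inequality $n\sqrt{\delta^{15}\theta\beta/2^{32}} \le \eps^9\theta^2\beta$ simply fails, so the $\beta$-range in the statement is not what keeps your recursion closed (in the paper the upper bound $\beta < 240n^2/(\eps\delta^4)$ is used only to identify which term achieves the minimum in \eqref{eq:sp:gap:min:bound}).
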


We remark that the proof of \cref{thm:cheeger:is:everything} does not use this lemma, rather a simpler decomposition lemma, \cref{cl:prim:deco}, which is also the first step in the proof of \cref{lem:main:deco}. 

\subsection{Preliminary estimates on the spectral gap} \label{sssec:spec:gap}

In this subsection we prove the following lemma allowing us to lower bound the spectral gap of a decomposable graph. 
\begin{definition}\label{def:hp} Given a partition $\deco$ of $V$, denoted by $V=V_1 \sqcup \ldots \sqcup V_k$, and some $c>0$, we define the graph $H(\deco,c)$ as follows. The vertices of $H(\deco,c)$ are $[k]$ where each vertex $i\in[k]$ represents a set $V_i$ of $\deco$ and we join an edge $(i,j)$ if $|E(V_i,V_j)| > c$.
\end{definition}

\begin{lemma}\label{lem:dec:spec:gap}
	Let $G=(V,E)$ be a simple graph on $n$ vertices. Let $\deco$ be a partition of $V$ denoted by $V=V_1\sqcup\ldots\sqcup V_k$. Assume that there exists $a,b,c>0$ such that the following conditions hold.
	\begin{itemize}
		\item For every $i\in[k]$, the spectral gap of $G[V_i]$ is larger than $a$,
		\item For every $i\in[k]$ and every $v\in V_i$ we have $\deg(v,V_i) \geq b$,
		\item The graph $H(\deco,c)$ is connected.
	\end{itemize}
	Then, the spectral gap of $G$ is at least $\min\left\{a, \frac{abc}{6kn^3}\right\}$. 
\end{lemma}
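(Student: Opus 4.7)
The plan is to verify the Poincar\'e inequality $\mathcal{E}(f,f) \geq \gamma \cdot \Var_\pi(f)$ directly, with $\gamma := \min\{a,\, abc/(6kn^3)\}$, where $\pi(v) = \deg_G(v)/(2|E|)$ and $\mathcal{E}(f,f) := \frac{1}{2|E|}\sum_{\{u,v\}\in E}(f(u)-f(v))^2$. Given any $f$ with positive variance, I introduce the block-averages $\bar f_i := \E_{\pi_i} f$, where $\pi_i$ is the normalized restriction of $\pi$ to $V_i$, and set $f_1(v) := f(v) - \bar f_{i(v)}$. The standard within-/between-block identity
\begin{equation*}
\Var_\pi(f) \;=\; \underbrace{\sum_i \pi(V_i)(\bar f_i - \E_\pi f)^2}_{=: A} \;+\; \underbrace{\sum_i \pi(V_i)\Var_{\pi_i}(f)}_{=: B}
\end{equation*}
splits the task into bounding $A$ via the between-block structure (the connectivity of $H(\deco,c)$) and $B$ via the within-block structure (the spectral gap and degree hypotheses), which are then combined.

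For the within-block estimate I apply the spectral gap of $G[V_i]$ to $f_1|_{V_i}$. Intra-block edges contribute $(f_1(u)-f_1(v))^2$ to $\mathcal{E}(f,f)$, and the spectral-gap hypothesis yields $\sum_{e \in E(G[V_i])}(f(u)-f(v))^2 \geq 2a\,|E(G[V_i])|\,\Var_{\pi^{(i)}}(f_1)$, where $\pi^{(i)}$ is the stationary measure of the simple random walk on $G[V_i]$ (which weights $v$ by $\deg_{G[V_i]}(v)$ rather than $\deg_G(v)$). The key quantitative input is the assumption $\deg_G(v,V_i) \geq b$ together with $\deg_G(v) \leq n$, which implies the pointwise comparison $\pi^{(i)} \geq (b/n)\pi_i$ and hence $\Var_{\pi^{(i)}}(f_1) \geq (b/n)\Var_{\pi_i}(f)$. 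Combining this with $|E(G[V_i])| \geq |V_i|b/2$ and $\Vol_G(V_i) \leq |V_i|\,n$ produces an inequality of the form $\mathcal{E}(f,f) \geq c_1 \cdot B$ for an explicit $c_1 = c_1(a,b,n)$.

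For the between-block estimate I exploit the connectivity of $H(\deco,c)$. For every pair $(i,j)$ fix a path $i = i_0, \ldots, i_\ell = j$ in $H(\deco,c)$ of length $\ell \leq k-1$; Cauchy--Schwarz gives $(\bar f_i - \bar f_j)^2 \leq (k-1)\sum_t (\bar f_{i_t} - \bar f_{i_{t+1}})^2$, and summing with weights $\pi(V_i)\pi(V_j)$ yields $A \leq \tfrac{k}{2}\sum_{\{p,q\}\in H(\deco,c)}(\bar f_p - \bar f_q)^2$. For each edge $\{p,q\}$ of $H(\deco,c)$ I apply the elementary three-term bound $(\bar f_p - \bar f_q)^2 \leq 3[(f(u)-f(v))^2 + f_1(u)^2 + f_1(v)^2]$ to each of the $\geq c$ edges $(u,v) \in E(V_p,V_q)$ and average; summing over the edges of $H(\deco,c)$ and collapsing the $f_1^2$ contributions via $\sum_v \deg_G(v)\, f_1(v)^2 = 2|E|\, B$ then yields $A \leq c_2 \cdot (\mathcal{E}(f,f) + B)$ with $c_2$ of order $k|E|/c$.

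Substituting the within-block inequality $B \leq \mathcal{E}(f,f)/c_1$ into the between-block bound and using $A+B = \Var_\pi(f)$ produces $\mathcal{E}(f,f)/\Var_\pi(f) \geq \gamma$ of the stated form; the $a$ inside the minimum corresponds to the ``pure'' within-block regime, while the second term comes from the between-block pathway. The main obstacle is the careful bookkeeping of constants so that the claimed $abc/(6kn^3)$ emerges rather than a weaker bound such as $abc/(kn^4)$: the degree-ratio loss $b/n$ from the $\pi^{(i)}$-vs-$\pi_i$ comparison must be paid exactly once, matched against the volume bound $|V_i| \geq 2|E|\pi(V_i)/n$, which is the technical core of the argument.
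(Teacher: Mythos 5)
Your proposal is correct but follows a genuinely different route from the paper's. The paper applies the decomposition theorem of Jerrum--Son--Vigoda--Tetali (\cref{thm:jsvt}) as a black box, bounding the restriction-chain gaps through a Dirichlet-form comparison for added laziness (\cref{cl:weird:laziness}) and the projection-chain gap through the path method (\cref{lem:path:method}), then multiplying. You instead verify the Poincar\'e inequality for $G$ directly via the law of total variance with respect to the block structure, in effect reproving the special case of the JSVT decomposition needed here and fusing it with the within-block estimate into a single self-contained argument. This is more elementary, avoids invoking an external theorem, and does recover the stated constant: with the path weights summing to at most one, the between-block term satisfies $A \le \tfrac{k-1}{2}\sum_{\{p,q\}\in H}(\bar f_p-\bar f_q)^2$, and the three-term square bound with $|E|\le n^2/2$ and $c\le n^2$ gives exactly $abc/(6kn^3)$ once $B \le \tfrac{n}{ab}\,\mathcal{E}(f,f)$ is in hand.

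The one place where the steps you actually write down would fail to produce the stated constant is the within-block estimate, which is precisely the bookkeeping pitfall you flag at the end. Combining $\Var_{\pi^{(i)}}(f)\ge \tfrac{b}{n}\Var_{\pi_i}(f)$ with $|E(G[V_i])|\ge |V_i|b/2$ and $\Vol_G(V_i)\le |V_i|n$ pays the degree-ratio loss $b/n$ twice (once in the variance comparison, once through $2|E(G[V_i])|/\Vol_G(V_i)\ge b/n$), yielding only $\mathcal{E}(f,f)\ge \tfrac{ab^2}{n^2}B$ and hence a final bound of order $ab^2c/(kn^4)$. The fix is to keep the product together and use
\begin{equation*}
2|E(G[V_i])|\,\Var_{\pi^{(i)}}(f)\;=\;\min_{t\in\mathbb R}\sum_{v\in V_i}\deg_{G[V_i]}(v)\,(f(v)-t)^2\;\ge\;\frac{b}{n}\,\Vol_G(V_i)\,\Var_{\pi_i}(f),
\end{equation*}
so that the loss $b/n$ is paid exactly once and $\mathcal{E}(f,f)\ge \tfrac{ab}{n}B$ follows. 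Also note that for $k=1$ this argument gives only $ab/n$, not the claimed $a$; that case must be treated separately (as the paper does, trivially, since then $\pi^{(1)}=\pi_1=\pi$).
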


\cref{lem:dec:spec:gap} is an application of the main result of \cite{JSVT04} together with some quick estimates involving the Dirichlet form (see \cite[Chapter 13]{LPW2} for further reading on the Dirichlet form). In the rest of this subsection we cite and prove these necessary background results, then prove the lemma. Since the proof digresses from the main ideas of this paper, the reader may want to take this lemma as a ``black box'' and skip reading its proof. We will typically use this Lemma when $a$ is roughly a constant, $b$ is of order $n$ and $c$ is of order $\Phi(G)n^2$.

Let $(G,w)$ be a network where $G = (V,E)$ with a partition of its vertex set $V = V_1 \sqcup \ldots \sqcup V_k$. Let $\pi$ be the the stationary measure of the simple (or lazy) random walk on $(G,w)$.
For such a network with a partition of it vertex set to $k$ sets,
we define the distribution $\overline{\pi}$ on $[k]$ by setting $\overline{\pi}(i) = \sum_{v\in V_i}\pi(v)$. The \defn{projection chain} is a Markov chain on $[k]$ with the following transition probabilities
\begin{equation}\label{eq:proj:chain}
\overline{P}_{i,j} = \frac{1}{\overline{\pi}(i)}\sum_{v\in V_i, u\in V_j}\pi(v)P(v,u).
\end{equation}  
Note that $\overline{\pi}$ is the stationary distribution of this chain.
Furthermore, we define $k$ \defn{restriction chains}, to which we will also refer as the restriction walks. For every $i\in[k]$, this restriction walk is a Markov chain on $V_i$ with transition probabilities
\begin{equation*}
P_i(x,y) = \begin{cases}
P(x,y) & x\neq y, \\
1 - \sum_{w\in V_i\setminus \{x\}}P(x,w) & x=y.
\end{cases}
\end{equation*} 
We are now ready to state a weaker version of \cite{JSVT04}*{Theorem 1} which will be useful later. We remark that our version follows easily from \cite{JSVT04}*{Theorem 1} by applying trivial upper bounds to the spectral gap of the projection chain and to the probability to move from one set in the decomposition to another one.
\begin{theorem}[\cite{JSVT04}*{Theorem 1}]\label{thm:jsvt}
	Let $(G,w)$ be a network where $G=(V,E)$ and let $V=V_1 \sqcup \ldots \sqcup V_k$ be a partition of its vertex set. Denote by $\bar{\gamma}$ the spectral gap of the projection chain associated with it and for every $i\in[k]$ let $\gamma_i$ be the spectral gap of the restriction walk on $V_i$. Then, the spectral gap $\gamma$ of the random walk on $(G,w)$ satisfies
	\begin{equation*}
	\gamma \geq \min_i \frac{\bar{\gamma}\gamma_i}{6}.
	\end{equation*} 
\end{theorem}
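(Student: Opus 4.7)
The plan is to treat this as a direct corollary of the original [JSVT04]*Theorem 1, which is a stronger and more quantitative statement. That theorem provides a lower bound of the form
\[
\gamma \;\geq\; \min_i \frac{\bar{\gamma}\,\gamma_i}{\bar{\gamma} + 2\gamma^{\max}_{\text{out}} + \text{(lower-order terms)}},
\]
where the denominator involves the spectral gap of the projection chain itself together with a quantity controlling how quickly the chain can leave a single component $V_i$ of the partition (e.g., $\gamma^{\max}_{\text{out}} = \max_{i,j\neq i}\sum_{v\in V_i,u\in V_j}\pi(v)P(v,u)/\overline{\pi}(i)$, or the analogous holding probability appearing in their definition). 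So my plan is to first recall that theorem verbatim, identifying exactly which of its terms are the ones we intend to bound crudely.

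Next, I would apply the two trivial upper bounds mentioned in the remark preceding the theorem. First, $\bar{\gamma} \leq 1$ for any Markov chain, since the spectral gap of any reversible chain lies in $[0,2]$ and in fact in $[0,1]$ for a lazy/projection chain of the sort considered here. Second, any probability of moving from one component to another is bounded above by $1$, which immediately gives $\gamma^{\max}_{\text{out}} \leq 1$. Substituting both bounds into the denominator of the [JSVT04] estimate makes the denominator at most $6$ (the precise constant comes from their explicit formula — the factor of $2$ in front of $\gamma^{\max}_{\text{out}}$ plus the term $\bar{\gamma}$ plus the remaining small correction add up to at most $6$), which yields
\[
\gamma \;\geq\; \min_i \frac{\bar{\gamma}\,\gamma_i}{6},
\]
as desired.

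The only real step here is bookkeeping: read off the precise form of [JSVT04]*Theorem 1, check that the terms I propose to bound are indeed $\bar{\gamma}$ and an exit probability, and verify the arithmetic yielding the constant $6$ in the denominator. There is no substantive obstacle, and this is why the authors call it a weaker version obtained ``by applying trivial upper bounds.'' The main reason for stating this weakened version at all is convenience of application in \cref{lem:dec:spec:gap}: in the regime where $\bar{\gamma}$ and the exit probability are both small (as will be the case when the components of $\deco$ are only weakly connected), the original [JSVT04] bound and the simplified bound differ only by a universal constant, and the simplified form is cleaner to plug into the subsequent calculations.
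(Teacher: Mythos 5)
Your proposal matches the paper's treatment exactly: the paper does not prove this statement either, but simply cites \cite{JSVT04}*{Theorem 1} and remarks that the stated weaker form ``follows easily \ldots by applying trivial upper bounds to the spectral gap of the projection chain and to the probability to move from one set in the decomposition to another one,'' which is precisely the bookkeeping you describe (the actual \cite{JSVT04} bound is a minimum of two terms rather than a single fraction, but bounding $\bar{\gamma}$, $\gamma_i$ and the escape probability by their trivial maxima does yield the constant $6$). This is correct and the same approach.
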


For an irreducible Markov chain on a finite state space $\Omega$ with transition matrix $P$, stationary distribution $\pi$ and a function $f:\Omega \to \mathbb{R}$, we denote
\begin{equation*}
\mathcal{E}^P_\pi(f) := \frac{1}{2}\sum_{x,y\in \Omega}\pi(x)P(x,y)(f(x)-f(y))^2.
\end{equation*}

The following lemma which we will not prove is helpful in estimating spectral gaps of networks which are obtained by a small perturbation of another network.

\begin{lemma}[\cite{LPW2}*{Lemma 13.8}]\label{lem:comp:energy}
	Let $P_0$ and $P_1$ be transition matrices with stationary distributions $\pi_0$ and $\pi_1$ over the same finite state space $\Omega$. Let $\gamma^0$ and $\gamma^1$ be their spectral gaps, respectively. If there exists $\alpha > 0$ such that for all functions $f:\Omega\to\mathbb{R}$ we have $\mathcal{E}^{P_1}_{\pi^1}(f) \leq \alpha\mathcal{E}^{P_0}_{\pi^0}(f)$, then 
	\begin{equation*}
	\gamma^1 \leq \left(\max_{\omega\in \Omega}\frac{\pi^0(\omega)}{\pi^1(\omega)}\right) \alpha \gamma^0.
	\end{equation*}
\end{lemma}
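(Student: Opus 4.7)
The plan is to exploit the Rayleigh (Dirichlet) quotient characterization of the spectral gap for reversible chains, namely
\[
\gamma = \min_{f \text{ non-constant}} \frac{\mathcal{E}^P_\pi(f)}{\Var_\pi(f)}, \qquad \Var_\pi(f) = \sum_{x\in\Omega}\pi(x)\bigl(f(x)-\bar f\bigr)^2,
\]
where $\bar f = \sum_x \pi(x)f(x)$. Both chains will be assumed reversible (as is implicit in speaking of spectral gaps via this variational identity); the Dirichlet form $\mathcal{E}^P_\pi(f)$ depends only on $f$ and the transition kernel, not on any additive constant, so it is insensitive to replacing $f$ by $f-c$.

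First I would let $f_0$ be an eigenfunction of $P_0$ realizing the minimum, so that $\gamma^0 = \mathcal{E}^{P_0}_{\pi^0}(f_0)/\Var_{\pi^0}(f_0)$ with $f_0$ non-constant. Since $f_0$ is non-constant it is an admissible test function for the variational principle applied to $P_1$, giving
\[
\gamma^1 \leq \frac{\mathcal{E}^{P_1}_{\pi^1}(f_0)}{\Var_{\pi^1}(f_0)}.
\]
The hypothesis immediately supplies a numerator bound $\mathcal{E}^{P_1}_{\pi^1}(f_0) \leq \alpha\,\mathcal{E}^{P_0}_{\pi^0}(f_0)$, so the whole proof reduces to comparing the variances in the correct direction, namely showing $\Var_{\pi^1}(f_0) \geq \Var_{\pi^0}(f_0)/M$ where $M := \max_\omega \pi^0(\omega)/\pi^1(\omega)$.

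The key trick for the variance comparison is that $\Var_{\pi^0}(f_0) = \min_{c\in\RR}\sum_x \pi^0(x)(f_0(x)-c)^2$, so I may choose $c$ freely on the right-hand side. Taking $c := \sum_x \pi^1(x)f_0(x)$, the $\pi^1$-mean of $f_0$, yields
\[
\Var_{\pi^0}(f_0) \leq \sum_x \pi^0(x)\bigl(f_0(x)-c\bigr)^2 \leq M\sum_x \pi^1(x)\bigl(f_0(x)-c\bigr)^2 = M\,\Var_{\pi^1}(f_0),
\]
where the middle inequality uses $\pi^0(x) \leq M\pi^1(x)$ pointwise and the last equality is the definition of variance with respect to its own mean. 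Rearranging gives the desired bound on $\Var_{\pi^1}(f_0)$.

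Putting the three pieces together,
\[
\gamma^1 \leq \frac{\mathcal{E}^{P_1}_{\pi^1}(f_0)}{\Var_{\pi^1}(f_0)} \leq \frac{\alpha\,\mathcal{E}^{P_0}_{\pi^0}(f_0)}{\Var_{\pi^0}(f_0)/M} = M\alpha\,\gamma^0,
\]
which is the claim. The only conceptually non-routine step is the variance comparison, and the whole point is that choosing the centering constant to be the $\pi^1$-mean (rather than the $\pi^0$-mean that actually defines $\Var_{\pi^0}$) is exactly what lets the pointwise density bound $\pi^0 \leq M\pi^1$ be applied cleanly. Everything else is an application of the variational principle and the hypothesis.
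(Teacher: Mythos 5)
Your proof is correct. The paper does not prove this lemma at all --- it is quoted verbatim from \cite{LPW2}*{Lemma 13.8} as a black box --- and your argument is essentially the standard textbook proof: the variational (Rayleigh quotient) characterization of the gap, the hypothesis to control the Dirichlet form, and the centering trick $\Var_{\pi^0}(f)=\min_c\sum_x\pi^0(x)(f(x)-c)^2$ with $c$ taken to be the $\pi^1$-mean to compare the variances. The only hypothesis you add, reversibility of both chains, is indeed assumed in the cited source and holds in all of the paper's applications (random walks on networks), so there is nothing missing.
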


\begin{claim}\label{cl:dec:gap}
	Let $G=(V,E)$ and let $W_0 = (G,w_0)$ and $W_1 = (G,w_1)$ be two networks such that there exists a vertex $v\in V$ such that $w^0_{v,v} < w^1_{v,v}$ and for every other edge $(u,w) \neq (v,v)$ we have $w^0_{u,w} = w^1_{u,w}$. Denote by $\gamma^0$ and $\gamma^1$ the spectral gaps corresponding to $W_0$ and $W_1$, respectively. Then, $\gamma^1 \leq \gamma^0$. 
\end{claim}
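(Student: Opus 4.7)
The approach is to apply \cref{lem:comp:energy} directly, comparing the Dirichlet forms of the two networks. Set $c := w^1_{v,v} - w^0_{v,v} > 0$ and let $W^i := \sum_{x} w^i(x)$ denote the total weight of the network $W_i$. Since the two edge-weight functions agree everywhere except at the single loop $(v,v)$, we have $w^1(x) = w^0(x)$ for every $x \neq v$, while $w^1(v) = w^0(v) + c$ and hence $W^1 = W^0 + c$ (up to the conventional factor coming from how loops are counted, which will cancel at the end). Recall the closed form
\begin{equation*}
\mathcal{E}^{P}_{\pi}(f) \;=\; \frac{1}{2W}\sum_{x,y} w(x,y)\bigl(f(x)-f(y)\bigr)^2 .
\end{equation*}

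The central observation is that the self-loop at $v$ contributes $(f(v)-f(v))^2 = 0$ to this double sum, so changing only $w(v,v)$ leaves every summand unchanged. Therefore
\begin{equation*}
\mathcal{E}^{P_1}_{\pi^1}(f) \;=\; \frac{1}{2W^1}\sum_{x,y} w^0(x,y)\bigl(f(x)-f(y)\bigr)^2 \;=\; \frac{W^0}{W^1}\,\mathcal{E}^{P_0}_{\pi^0}(f),
\end{equation*}
and we may apply \cref{lem:comp:energy} with $\alpha = W^0/W^1 < 1$.

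It remains to compute the maximum of $\pi^0(\omega)/\pi^1(\omega)$ over vertices $\omega$. For $\omega \neq v$ the vertex weight $w^0(\omega)$ is unchanged while the normalizing constant grew from $W^0$ to $W^1$, so the ratio equals exactly $W^1/W^0$. For $\omega = v$ the numerator weight $w^0(v)$ is also boosted in $W_1$, which makes the ratio strictly smaller than $W^1/W^0$. Hence $\max_\omega \pi^0(\omega)/\pi^1(\omega) = W^1/W^0$, and \cref{lem:comp:energy} gives
\begin{equation*}
\gamma^1 \;\leq\; \frac{W^1}{W^0}\cdot\frac{W^0}{W^1}\cdot \gamma^0 \;=\; \gamma^0,
\end{equation*}
as claimed. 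There is essentially no obstacle here beyond bookkeeping the weight conventions; the pleasant feature of the argument is that the two factors coming from \cref{lem:comp:energy} cancel \emph{exactly}, so no loss is incurred and the sharp inequality $\gamma^1 \leq \gamma^0$ (rather than a weaker multiplicative comparison) falls out immediately.
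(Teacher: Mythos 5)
Your proof is correct and matches the paper's argument essentially line for line: both apply \cref{lem:comp:energy}, both establish $\mathcal{E}^{P_1}_{\pi^1}(f) = (Z_0/Z_1)\mathcal{E}^{P_0}_{\pi^0}(f)$ by observing that the self-loop term contributes zero to the Dirichlet sum, and both verify $\max_\omega \pi^0(\omega)/\pi^1(\omega) = Z_1/Z_0$ so that the two factors cancel exactly. The only difference is cosmetic notation ($W^i$ versus $Z_i$).
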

\begin{proof}
	For $i\in \{0,1\}$, we let $P_i$ be the transition matrix corresponding to $W_i$. We denote 
	\begin{align*}
	w^i_u = \sum_{e\ni u}w^i_e, \quad \quad Z_i = \sum_{u\in V} w^i_u.
	\end{align*}
	We also denote by $\pi^i$ the stationary distribution of $W_i$ and recall that $\pi^i(u) = w^i_u / Z_i$. 
	We will now use \cref{lem:comp:energy} to show that $\gamma^1 \leq \gamma^0$. A simple calculation shows that for every $f$
	\begin{equation}\label{eq:energy:ineq}
	\mathcal{E}^{P_1}_{\pi^1}(f) = \frac{1}{2}\sum_{u,v\in V}\frac{w^1_{u,v}}{Z_1}(f(u)-f(v))^2 =  \frac{Z_0}{Z_1}\mathcal{E}^{P_0}_{\pi^0}(f).
	\end{equation}
	Also, we write $\eps = w^1_{vv} - w^0_{vv} > 0$. Then, for every $u \in V$ we have
	\begin{equation*}
	\frac{\pi^0(u)}{\pi^1(u)} = \frac{\frac{w_u^0}{Z_0}}{\frac{w_u^1}{Z_1}} = \begin{cases}
	\frac{Z_1}{Z_0} & u\neq v \\
	\frac{Z_1}{Z_0}\cdot \frac{w_v^0}{w_v^0 + \eps} & u = v
	\end{cases}.
	\end{equation*}
	Hence, in any case $\pi^0(u)/\pi^1(u) \leq Z_1/Z_0$. By \eqref{eq:energy:ineq}, we can use \cref{lem:comp:energy} with $\alpha = Z_0 / Z_1$. We thus get that $\gamma^1 \leq \gamma^0$. 
\end{proof}

As mentioned in the last subsection, if $P$ is a transition matrix of some random walk and $Q$ is the transition matrix of its lazy version, then $Q = \frac{1}{2}(I+P)$ and hence $\gamma(Q) = \frac{1}{2}\gamma(P)$. Similarly, if $X$ is a random walk with transition $P$ and $Y$ is an $\alpha$-lazy random version of $X$, that is, at each step the walker stays put with probability $\alpha$ and otherwise chooses its next step according to $P$, then $Q = \alpha I + (1-\alpha)P$. Hence, $\gamma(Q) = (1-\alpha)\gamma(P)$. 

We can further generalize this. For every $v\in V$, let $p_v\in[0,1)$. We call $(p_v)_{v\in V}$ the \defn{lazy vector}. Let $X$ be some random walk on $V$ with transition matrix $P$ and let $Y$ be the following random walk on $V$. At each step, if the walker is at some $u\in V$, it stays put with probability $p_u$ and  otherwise chooses its next step according to $P$. The next claim shows that we can lower bound the spectral gap associated with this random walk.

\begin{claim}\label{cl:weird:laziness}
	Let $P$ be a transition matrix of a random walk $X$ on a finite connected graph $G$ with spectral gap $\gamma(P)$. Let $\alpha \in (0,1)$ and let $(p_v)_{v\in V} \in [0,\alpha)^V$ be a vector which we call the lazy vector. Let $Y$ be the following random walk on $G$. If $Y_{t} = v$, stay put with probability $p_v$. Otherwise, choose $X_{t+1}$ according to $P$. Let $Q$ be the transition matrix of $Y$. Then, $\gamma(Q) \geq (1-\alpha)\gamma(P)$.
\end{claim}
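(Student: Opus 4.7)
The approach is via Dirichlet forms and the variational characterization $\gamma = \inf_f \mathcal{E}(f)/\Var(f)$ of the spectral gap of a reversible chain. First, I would identify the stationary distribution of $Q$. Writing $Q(v,u)=(1-p_v)P(v,u)$ for $u\ne v$ and $Q(v,v)=p_v + (1-p_v)P(v,v)$, and assuming $P$ is reversible with respect to $\pi$ (as is always the case in this paper, since $P$ arises from a simple random walk on a network), the detailed balance equations for $P$ yield detailed balance for $Q$ with respect to
\[
\tilde{\pi}(v) \;:=\; \frac{1}{Z}\cdot\frac{\pi(v)}{1-p_v}, \qquad Z \;:=\; \sum_{u\in V}\frac{\pi(u)}{1-p_u}.
\]

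Next, I would compute the Dirichlet form of $Q$. The diagonal terms drop out since $(f(v)-f(v))^2=0$, so
\[
\mathcal{E}_{\tilde{\pi}}^Q(f) \;=\; \frac{1}{2}\sum_{v\ne u}\tilde{\pi}(v)(1-p_v)P(v,u)(f(v)-f(u))^2 \;=\; \frac{1}{Z}\,\mathcal{E}_{\pi}^P(f).
\]
For the variance I would use the elementary identity $\Var_{\tilde{\pi}}(f) = \min_c \sum_v \tilde{\pi}(v)(f(v)-c)^2$ with $c=\E_\pi[f]$, followed by the uniform bound $1-p_v \ge 1-\alpha$:
\[
\Var_{\tilde{\pi}}(f) \;\le\; \sum_v \frac{\pi(v)}{Z(1-p_v)}(f(v)-\E_\pi[f])^2 \;\le\; \frac{1}{Z(1-\alpha)}\Var_{\pi}(f).
\]

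Combining the two estimates and taking the infimum over non-constant $f$ gives
\[
\gamma(Q) \;=\; \inf_f \frac{\mathcal{E}_{\tilde{\pi}}^Q(f)}{\Var_{\tilde{\pi}}(f)} \;\ge\; (1-\alpha)\inf_f \frac{\mathcal{E}_{\pi}^P(f)}{\Var_{\pi}(f)} \;=\; (1-\alpha)\gamma(P),
\]
which is the claim. I do not anticipate any real obstacle here: the normalization $Z$ cancels cleanly between the Dirichlet form and the variance estimate, and the factor $(1-\alpha)$ emerges from the single lossy step in which the non-uniform reweighting $1/(1-p_v)$ is replaced by its worst-case bound $1/(1-\alpha)$. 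The only point requiring care is guessing the correct stationary measure $\tilde\pi$; once that is in hand, all the other manipulations are routine.
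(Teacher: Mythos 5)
Your proof is correct, and it takes a genuinely different (and more direct) route than the paper's. The paper first compares $Q$ to the uniformly lazy chain $\alpha I + (1-\alpha)P$ (whose gap is exactly $(1-\alpha)\gamma(P)$), and then argues that the walk with lazy vector $(p_v)$ can be reached from this uniformly lazy chain by iteratively decreasing the self-loop weight one vertex at a time; a separate monotonicity claim (\cref{cl:dec:gap}, itself an application of the LPW comparison lemma, \cref{lem:comp:energy}) guarantees the spectral gap cannot decrease at any step. You instead compare $Q$ directly to $P$ in one shot: identify $\tilde\pi(v) \propto \pi(v)/(1-p_v)$ as the stationary measure of $Q$, observe that $\mathcal{E}^Q_{\tilde\pi}(f) = Z^{-1}\mathcal{E}^P_\pi(f)$ \emph{exactly} (the off-diagonal weights only get rescaled), and absorb the entire loss into the worst-case bound $1/(1-p_v)\le 1/(1-\alpha)$ in the variance estimate, with the normalizing constant $Z$ cancelling. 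This makes the origin of the factor $(1-\alpha)$ transparent and avoids the detour through an auxiliary chain. You are right that the argument tacitly uses reversibility of $P$ (to verify detailed balance of $Q$ with respect to $\tilde\pi$); this is automatic in the paper's setting, where $P$ is always a (possibly weighted) simple random walk, but is worth stating explicitly since the claim's wording does not say it. Your approach, being a single application of the variational characterization $\gamma = \inf_{f\ \mathrm{nonconstant}}\mathcal{E}(f)/\Var(f)$, is arguably cleaner; the paper's telescoping version has the advantage of isolating \cref{cl:dec:gap} as a reusable monotonicity statement.
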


\begin{proof}
	Let $(G,w^\alpha)$ be the network associated with the graph $G$ such that $w^\alpha_{uv} = 1$ if $(u,v)\in E$ and $w^\alpha_{vv} = \beta_v$ where $\beta_v$ satisfies $\frac{\beta_v}{\deg(v) + \beta_v} = \alpha$. If $P$ is the transition matrix of the simple random walk on $G$, then $Q:= \alpha I + (1-\alpha)P$ is the transition matrix corresponding to $(G,w^\alpha)$. Note that the spectral gap of $Q$ satisfies $\gamma(Q) = (1-\alpha)\gamma(P)$. Let $(p_v)_{v\in V}$ be the lazy vector with all values non-negative and smaller than $\alpha$. The random walk that stays put at some $u\in V$ with probability $p_u$ and jumps according to $P$ otherwise can be seen as a random walk on a network $(G,w^p)$ which can be constructed from $(G,w^\alpha)$ by going iteratively over all vertices $v\in V$ and decreasing the weight of $w^\alpha_{vv}$ at each step according to $p_v$.  By \cref{cl:dec:gap}, at each step the spectral gap can be only increased. Hence, the spectral gap of the walk corresponding to $(G,w^p)$ is larger than $\gamma(Q) = (1-\alpha)\gamma(P)$, as required.
\end{proof}
Another canonical method of bounding the spectral gap from below is \emph{the path method}. 

\begin{claim}[The path method, see \cite{LPW2}*{Corollary 13.21}]\label{lem:path:method}
	Let $P$ be a transition matrix of a Markov chain on a finite state space $\Omega$ with stationary distribution $\pi$ and spectral gap $\gamma$. For every $x,y\in \Omega$, denote $Q(x,y) = \pi(x)P(x,y)$. Let $\{\varphi_{x,y}\}_{x,y\in \Omega}$ be a collection of paths in $\Omega$ from $x$ to $y$ such that every path $\varphi_{x,y}$ is a path from $x$ to $y$ with $Q(e) > 0$ for every $e\in \varphi_{x,y}$. Denote 
	\begin{equation*}
	B := \max_e \frac{1}{Q(e)}\sum_{\varphi_{x,y}\ni e}\pi(x)\pi(y)|\varphi_{x,y}|.
	\end{equation*}
	Then, $\gamma\geq B^{-1}$.
\end{claim}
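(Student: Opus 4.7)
The plan is to derive the bound from the variational characterization of the spectral gap via the Dirichlet form. Recall that because $P$ is self-adjoint in $L^2(\pi)$ (or, more generally, for reversible chains; in the irreversible case one works with the additive reversibilization, which has the same $Q(x,y)$ up to symmetrization), one has
\begin{equation*}
\gamma \;=\; \min_{f:\,\Var_\pi(f)>0}\;\frac{\mathcal{E}^P_\pi(f)}{\Var_\pi(f)},
\end{equation*}
where $\mathcal{E}^P_\pi(f)=\tfrac{1}{2}\sum_{x,y}\pi(x)P(x,y)(f(x)-f(y))^2$ and $\Var_\pi(f)=\tfrac{1}{2}\sum_{x,y}\pi(x)\pi(y)(f(x)-f(y))^2$. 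Thus it suffices to show that for every $f:\Omega\to\RR$,
\begin{equation*}
\Var_\pi(f)\;\le\;B\cdot\mathcal{E}^P_\pi(f).
\end{equation*}

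The first step is to telescope differences along the prescribed paths. For each ordered pair $(x,y)$, write $f(y)-f(x)=\sum_{e\in\varphi_{x,y}}\nabla f(e)$, where $\nabla f(e)=f(e^+)-f(e^-)$ with $e^-,e^+$ the tail and head of $e$ along $\varphi_{x,y}$. By the Cauchy–Schwarz inequality applied to the sum of $|\varphi_{x,y}|$ terms,
\begin{equation*}
(f(y)-f(x))^2 \;\le\; |\varphi_{x,y}|\sum_{e\in\varphi_{x,y}}(\nabla f(e))^2.
\end{equation*}

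The second step is to substitute into the variance and swap the order of summation. Doing so gives
\begin{equation*}
\Var_\pi(f)\;\le\;\frac{1}{2}\sum_{x,y}\pi(x)\pi(y)|\varphi_{x,y}|\sum_{e\in\varphi_{x,y}}(\nabla f(e))^2 \;=\;\frac{1}{2}\sum_{e}(\nabla f(e))^2\sum_{\varphi_{x,y}\ni e}\pi(x)\pi(y)|\varphi_{x,y}|.
\end{equation*}
By the definition of $B$, the inner sum is at most $B\cdot Q(e)$, so the right hand side is at most
\begin{equation*}
\frac{B}{2}\sum_{e}Q(e)(\nabla f(e))^2 \;=\; B\cdot\mathcal{E}^P_\pi(f),
\end{equation*}
the last equality being the definition of the Dirichlet form (writing $e=(x,y)$ and $Q(e)=\pi(x)P(x,y)$). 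This yields $\gamma\ge B^{-1}$.

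There is essentially no obstacle here: the argument is the classical Poincaré/canonical-paths estimate and the two nontrivial inputs — Cauchy–Schwarz and reordering a double sum — are both routine. The only point requiring mild care is the irreversible case, in which one should either assume reversibility (which holds for all networks considered in this paper) or apply the argument to the additive reversibilization $(P+P^\ast)/2$, whose edge weights $Q(x,y)$ coincide with those of $P$ after symmetrization and whose spectral gap is comparable to that of $P$; since the paper only uses the result for reversible walks on networks, no additional work is needed.
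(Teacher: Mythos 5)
Your proof is correct, and it is essentially the same canonical-paths argument that appears in the cited reference \cite{LPW2}*{Corollary~13.21} (telescoping along prescribed paths, Cauchy--Schwarz, interchange of sums, and the variational characterization $\gamma=\min_f \mathcal{E}^P_\pi(f)/\Var_\pi(f)$). The paper itself does not prove this claim --- it is quoted as a black box from Levin--Peres--Wilmer --- so there is no internal proof to compare against; your write-up supplies exactly the textbook derivation. One small point of bookkeeping: in the final step the identity $\tfrac{1}{2}\sum_e Q(e)(\nabla f(e))^2=\mathcal{E}^P_\pi(f)$ holds when $\sum_e$ ranges over all directed edges with $Q(e)>0$; if one instead sums only over edges actually used by some path, the equality becomes an inequality in the right direction, so the conclusion is unaffected. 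Your remark on reversibility is also apt and harmless here: the only chain to which the paper applies this lemma is the projection chain in the proof of \cref{lem:dec:spec:gap}, which is reversible with respect to $\overline{\pi}$ since the underlying network walk is.
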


\begin{proof}[Proof of \cref{lem:dec:spec:gap}]
	When $k=1$, this is trivial and the spectral gap of $G$ is $a$. We assume henceforth that $k\geq 2$ and we consider the projection and restriction chains associated with the decomposition of $V$ to sets of $\deco$ as described earlier in this section.
	Let $i\in[k]$ and consider the restriction chain associated with $V_i$, a set of $\deco$. 
	By our assumption, we have that the spectral gap of $G[V_i]$ is at least $a$. 
	However, the restriction walk on $V_i$ is different from the simple random walk on $G[V_i]$, as it is obtained from it by adding self loops for every edge that exits $V_i$. 
	For every $v\in V$ we denote by $p_v$ the probability to move from $v$ to itself in the restriction chain. 
	Since every $v\in V_i$ has $\deg(v, V_i) \geq b$, we have that $p_v \leq (1-\frac{b}{n})$. We thus have by \cref{cl:weird:laziness} that $\gamma_i$, the spectral gap of the restriction chain, satisfies $\gamma_i \geq \frac{ab}{n}$.
	
	We turn to the projection chain, which is a Markov chain on the state space $[k]$ with stationary distribution $\overline{\pi}$ and transition matrix $\overline{P}$ as in \eqref{eq:proj:chain}. We will use \cref{lem:path:method} to bound the spectral gap associated with it from below. For every edge $e = (l,m) \in H(\deco,c)$, we denote $Q(e) = \overline{\pi}(l)\overline{P}(l,m)$. We denote by $P$ the transition matrix of the original simple random walk. By the definition in \eqref{eq:proj:chain}, we have
	\begin{equation*}
	Q(e) = \overline{\pi}(l)\cdot \frac{1}{\overline{\pi}(l)}\sum_{u\in V_l, v\in V_m} \pi(u)P(u,v) = \sum_{u\in V_l} \frac{|E(u,V_m)|}{2|E|} \geq \frac{|E(V_l,V_m)|}{n^2} \geq \frac{c}{n^2}.
	\end{equation*}
	Since the graph $H(\deco,c)$ is connected, for every $x,y\in[k]$ we can choose a path $\varphi_{x,y}$ connecting $x$ and $y$ such that every edge in this path has $Q(e) \geq c/n^2$. We choose such paths for every $x,y$ arbitrarily. We obtain that for every edge $e$ which belongs to any path in this choice  $(\varphi_{x,y})_{x,y\in[k]}$ we have 
	\begin{equation*}
	\frac{1}{Q(e)}\sum_{\gamma_{x,y}\ni e}\overline{\pi}(x)\overline{\pi}(y)|\gamma_{x,y}| \leq \frac{kn^2}{c}\sum_{\gamma_{x,y}\ni e}\overline{\pi}(x)\overline{\pi}(y) \leq \frac{kn^2}{c}.
	\end{equation*}
	Hence, by \cref{lem:path:method}, the spectral gap of the projection chain $\overline{\gamma}$ is at least $c/kn^2$.
	Using \cref{thm:jsvt}, we conclude
	\begin{equation*}
	\gamma \geq \min_{i\in[k]} \frac{\overline{\gamma}\gamma_i}{6} \geq \frac{abc}{6kn^3}.\qedhere
	\end{equation*}
	
\end{proof}

\subsection{Primary decomposition}

\begin{definition}\label{def:primary} Let $G=(V,E)$ be a graph on $n$ vertices with minimal degree at least $\delta n$. A \defn{$\delta$-primary} decomposition of $G$ is a partition of its vertices $V = V_1 \sqcup \ldots \sqcup V_k$ which has the following properties.
	
	\begin{enumerate}
		\item The number of sets in the decomposition, denoted by $k$, satisfies $k \leq 2/\delta$.
		\item For every $i\in[k]$, we have $|V_i| \geq \frac{\delta n}{2}$.
		\item For every $i\in[k]$ and each $v\in V_i$ we have that $\deg(v,V_i) \geq \frac{\delta^4 n}{40}$. 
		\item For every $i\in[k]$, the spectral gap of $G[V_i]$ is at least $\frac{\delta^{10}}{2^{22}}$. 
	\end{enumerate}
\end{definition}

\begin{lemma}\label{cl:prim:deco}
	Let $G=(V,E)$ be a simple graph on $n$ vertices with minimal degree at least $\delta n$. Then, there exists a $\delta$-primary decomposition of $G$.	
\end{lemma}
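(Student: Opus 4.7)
My plan is to construct the primary decomposition by an iterative refinement algorithm starting from the trivial partition $P=\{V\}$ and alternating two operations. \emph{Split:} if some $V_i\in P$ has $\gamma(G[V_i])<\delta^{10}/2^{22}$, then by~\eqref{eq:cheeger} there exists $S\subset V_i$ with $\pi_{V_i}(S)\le 1/2$ and $|\partial_{G[V_i]}S|/\mathrm{Vol}_{G[V_i]}(S)\le \delta^5/2^{10}$; we replace $V_i$ by $S$ and $V_i\setminus S$. \emph{Reassign:} if some $v\in V_i$ has $\deg_G(v,V_i)<\delta^4 n/40$, we move $v$ to the part $V_j$ in which it has the most neighbours. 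Since $\deg_G(v)\ge \delta n$ and the current number of parts satisfies $k\le 2/\delta$, such a $V_j$ receives at least $\delta^2 n/2$ neighbours of $v$, well above the threshold in property~(3).

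The core of the argument is that the invariants~(2) and~(3), treated as preserved throughout the algorithm, force every Cheeger cut to be balanced. Indeed, if every vertex of $V_i$ has at least $\delta^4 n/40$ neighbours in $V_i$ and $|S|\le \delta^4 n/80$, then each $v\in S$ has at least $\delta^4 n/80$ neighbours in $V_i\setminus S$, so $|\partial_{G[V_i]}S|\ge |S|\cdot \delta^4 n/80$; combined with $\mathrm{Vol}_{G[V_i]}(S)\le |S|\cdot |V_i|\le |S|\cdot n$ this forces the Cheeger ratio to be at least $\delta^4/80$, contradicting the bound $\delta^5/2^{10}$ for any $\delta\le 1$. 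Hence $|S|>\delta^4 n/80$, and a symmetric argument using $\pi_{V_i}(S)\le 1/2$ together with the lower bound $\mathrm{Vol}(V_i\setminus S)\ge \mathrm{Vol}(V_i)/2$ and $\max\deg_{V_i}\le n$ gives $|V_i\setminus S|\ge \delta^4|V_i|/80$. To upgrade these bounds to the sharper threshold $\delta n/2$ demanded by~(2), one exploits the stronger global hypothesis $\deg_G(v)\ge\delta n$: the first split on $V$ itself is automatically balanced with both sides of size at least $\delta n/2$, and this bound is then preserved inductively, folding back into a neighbouring part any sub-part that a later split drops below the threshold.

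For termination and final verification, the number of splits is at most $2/\delta-1$ since each final part has size at least $\delta n/2$; between consecutive splits, the number of reassignments is finite because the integer-valued potential $\Phi(P)=\sum_{v\in V}\deg_G(v,V_{i(v)})\le 2|E(G)|\le n^2$ strictly increases with every move, and because moves \emph{into} a part never decrease the internal degree of any existing member of that part, so reassignments never cascade at the destination. When the algorithm halts, property~(4) holds by the split stopping rule, (3) by the reassignment rule, and (1)--(2) by the size bookkeeping. The main obstacle I anticipate is keeping the invariants~(2) and~(3) simultaneously compatible throughout the algorithm: closing the gap between the $\delta^4 n/80$ bound that comes directly from Cheeger and the sharper $\delta n/2$ demanded by~(2) will require a carefully designed merging step for small parts, together with a verification that such merges do not spoil the spectral gaps established by the preceding splits.
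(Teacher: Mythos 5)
Your proposal takes a genuinely different route from the paper, but it has a gap that you yourself flag at the end, and I do not think it can be patched along the lines you sketch. The paper's split criterion is not the spectral gap of the current part but the \emph{edge density} between the two halves: it refines $W$ into $W_1\sqcup W_2$ only when $|E(W_1,W_2)|\le\frac{\delta^3}{20}|W_1||W_2|$. This has two consequences that your approach lacks. First, it gives a \emph{global} budget: summing $|W_1||W_2|$ over all refinement steps is at most $\binom{n}{2}$, so the total number of ``negligible'' edges is at most $\frac{\delta^3}{20}\binom{n}{2}$. This is what lets the paper conclude that at most $\delta^2 n/10$ vertices are ``bad'' and that every good vertex keeps $\ge\delta n/2$ non-negligible edges confined to its own part, so every part containing a good vertex automatically has size $\ge\delta n/2$ — condition (2) comes for free, without any balancing or merging. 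Second, the spectral-gap condition (4) is verified \emph{once, at the end}, by the argument around equation~\eqref{eq:est:of:t}: if a set $X\subset V_i$ had small conductance, the non-evil part of the cut would violate the fact that $U_i$ was irrefinable. The isoperimetric lower bound is thus a consequence of the stopping rule, not a maintained invariant.

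Your scheme has the logic inverted. You split on spectral gap, so by Cheeger you only get a cut $S$ that is balanced at the scale $|S|\ge\delta^4 n/80$, not $\delta n/2$ — the internal-degree invariant (3) you maintain is too weak to force the stronger size bound (2). You then need a merging step to restore (2), but merging back a small piece $S$ produced by a Cheeger cut precisely reintroduces the sparse cut between $S$ and its former complement, so the merged part will again fail the spectral gap test, and the algorithm can loop. This is not a bookkeeping detail: spectral-gap-driven splitting and minimum-size maintenance are in genuine tension, and your potential $\sum_v\deg(v,V_{i(v)})$ increases under reassignments but \emph{decreases} under splits, so once merges are in play it no longer gives termination. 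Your bound of $2/\delta-1$ on the number of splits is also circular: it presupposes the conclusion (2). Without (2), condition (3) alone only gives parts of size $\ge\delta^4 n/40$, hence up to $40/\delta^4$ parts, violating (1). The paper avoids all of this by never using the spectral gap as a stopping rule and instead deriving it a posteriori from the density-irrefinability of each surviving part; you would need a comparably different mechanism to close the gap.
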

\begin{proof}
	We build the decomposition inductively each time refining the partition of $V$. We begin with the trivial partition $\{V\}$. 
	At each step, if there is a subset $W$ in the partition that can be further partitioned $W=W_1 \sqcup W_2$ such that $|E(W_1,W_2)| \leq \frac{\delta^3}{20} |W_1||W_2|$, then we refine the partition by replacing $W$ with $W_1,W_2$. We call the edges $E(W_1,W_2)$ in each such refinement step \defn{negligible}. The choice of $W$ and $W_1, W_2$ is not necessarily unique and at each step we choose arbitrarily among all possibilities. Since the graph is finite this process must stop and we denote the final decomposition by $V = U_1 \sqcup \ldots \sqcup U_\ell$, where edges between any pair $U_i$ and $U_j$ are negligible. The sum of $|W_1||W_2|$, described above, over each of the $\ell$ refinement steps is no more than the cardinality of pairs of vertices, hence, the number of negligible edges is at most $\frac{\delta^3}{20}{n \choose 2}$.

	We call a vertex \defn{bad} if the number of negligible edges touching it is larger than $\delta n / 2$. Our bound on the number of negligible edges implies that there are no more than $\delta^2n/10$ bad vertices. 
	Every vertex which is not bad is called \defn{good}. If a set $U_i$ in the partition contains a good vertex we call it a \defn{good set}, otherwise, a \defn{bad set}. Since the minimal degree in the graph is larger than $\delta n$, every good vertex $v$touches at least $\delta n / 2$ edges that are not negligible; the corresponding neighbors must be in the same set of the partition as $v$. Hence each good set is of size at least $\delta n / 2$ and so their number is at most  $2/\delta$. We call bad vertices belonging to bad sets \defn{evil}. To obtain our primary decomposition, we remove all bad sets from the partition and redistribute the evil vertices among the good sets as follows. Assume without loss of generality that the good sets of the partition are $U_1, \ldots, U_k$ where $k\leq \ell$. 
	Let $v \in V \setminus \cup_{i=1}^k U_i$ be an evil vertex. Since the number of good neighbors of $v$ is at least $\delta n - \delta^2n/10$ and $k \leq 2/\delta$, there exists some $i\in[k]$ for which $d(v, U_i) \geq \delta^2 n / 3$. We add $v$ to one such set chosen arbitrarily. 
	
	We denote the resulting decomposition by $V = V_1 \sqcup \ldots \sqcup V_k$ (with $U_i \subset V_i$ for all $i\in[k]$) and argue that it satisfies the desired conditions of \cref{def:primary}. Conditions (1) and (2) are immediate. To see that condition $(3)$ is satisfied, let $i\in[k]$ and let $v\in V_i$. If $v$ is good, then $\deg(v,V_i) \geq \delta n/2$. If $v$ is bad but not evil, then $|E(\{v\}, U_i)| \geq \frac{\delta^3}{20} |U_i| \geq \delta^4 n / 40$ since otherwise we would have partitioned $U_i$ to $\{v\}$ and $U_i\setminus\{v\}$. If $v$ is evil, then it was added to $V_i$ since $d(v,U_i)\geq \delta^2 n / 3$.
	
	It remains to prove condition (4). Due to Cheeger's inequality (see \cref{eq:cheeger}), it is enough to show that for every $i\in[k]$
	\begin{equation}\label{eq:cheeg:to:prove}
	\Phi(G[V_i]) \geq \frac{\delta^5}{1200}.
	\end{equation}
	Fix $i\in [k]$. We slightly abuse notation and write $\Vol$ and $\pi$ for the volume and stationary measures on $G[V_i]$ respectively, that is, for any $S\subset V_i$ we have $\Vol(S)=\sum _{s \in S} \deg(s, V_i)$ and $\pi(S)=\Vol(S)/\Vol(V_i)$. Let $X \subset V_i$ be a subset with $\pi(X) \leq 1/2$. If at least half of the vertices of $X$ are evil, then its size is at most twice the number of bad vertices, i.e. $|X| \leq \delta^2 n / 5$. Each evil vertex of $X$ has at least $\delta^2n/3 - \delta^2n/5$ of its neighbors in $V_i$ outside $X$. Hence,
	\begin{equation*}
	\frac{|\partial X|}{\vol(X)} \geq \frac{\frac{2\delta^2n}{15}\cdot\frac{|X|}{2}}{n|X|} = \frac{\delta^2}{15}.
	\end{equation*} Suppose otherwise that at least half of the vertices of $X$ are non-evil. Denote the set of non-evil vertices of $X$ by $R$ and let $T$ be the other non-evil vertices of $V_i$. Note that $R\sqcup T = U_i$. The number of edges between them is at least $\frac{\delta^3}{20} |R||T|$, since otherwise $U_i$ would have been partitioned further. Thus,
	\begin{equation}\label{eq:est:of:t}
	\frac{|\partial X|}{\Vol(X)} \geq \frac{|E(R,T)|}{\Vol(X)} \geq \frac{\frac{\delta^3}{20}|R||T|}{\Vol(X)} \geq \frac{\frac{\delta^3}{20} |T| |X|}{2n|X|} \geq \frac{\delta^3|T|}{40n}.
	\end{equation}
	It remains to lower bound $|T|$. Denote by $G_i$ and $B_i$ the sets of good and bad vertices of $V_i$, respectively. We have that $|G_i| \geq \delta n/2 - \delta^2 n/10$. Each vertex $v\in G_i$ has $\deg(v,V_i)\geq \delta n/2$ hence $\Vol(G_i) \geq \delta^2 n^2/5$. On the other hand, since the total number of bad vertices is at most $\delta^2 n/10$ we have $\Vol(B_i)\leq \delta^2 n^2/10$. We deduce that $\pi(G_i)\geq 2/3$. Since $\pi(V_i \setminus X) \geq 1/2$, we have that $\pi(T) \geq \pi(G_i \cap (V_i \setminus X)) \geq 1/6$. We bound $\Vol(T)\leq |T|n$ and $\Vol(V_i)\geq\Vol(G_i) \geq \delta^2 n^2/5$ which with the last estimate gives $|T|\geq \delta^2 n /30$. We plug this into \eqref{eq:est:of:t} to obtain that $\frac{|\partial X|}{\Vol(X)} \geq \frac{\delta^5}{1200}$, as required.
\end{proof}

\subsection{The Cheeger constant and spectral gap are comparable on graphs with linear degree}\label{sec:cheegereverything}

\begin{proof}[Proof of \cref{thm:cheeger:is:everything}]
	For brevity we denote $\Phi(G) = r$.
	By \cref{cl:prim:deco}, there exists a $\delta$-primary decomposition $\deco$ of $G$, also denoted by $V=V_1 \sqcup \ldots \sqcup V_k$.
	We claim that the graph $H(\deco, r\delta^2 n^2 / 2k^2)$ is connected. Indeed, assume to the contrary that it is not connected and let $S\subseteq [k]$, $S \neq [k]$, be a connected component in this graph. Let $V_S := \cup_{i\in S}V_i$ be the set corresponding to $S$ in $G$. We may assume that $\pi(V_S) \leq 1/2$ (otherwise, we will take one of the connected components of $V_{[k] \setminus S}$). Since $\deco$ is $\delta$-primary and $\Phi(G) = r$, we have that
	\begin{equation*}
	|\partial V_S| \geq r\vol(V_S) \geq r \delta^2 n^2 / 2. 
	\end{equation*}
	Hence, we can find a component $V_j \subseteq (V\setminus V_S)$ and a component $V_i \subseteq V_S$ with $|E(V_i,V_j)| \geq r \delta^2 n^2 / 2k^2$, contradicting the assumption that $S$ is a connected component of $H(\deco, r\delta^2 n^2 / 2k^2)$. We can therefore apply \cref{lem:dec:spec:gap} and obtain that \cref{thm:cheeger:is:everything} holds with the constant $\frac{\delta^{19}}{2^{34}}$ (note that $\delta \leq 1$ so the minimum in the conclusion of \cref{lem:dec:spec:gap} is attained in the second item).
\end{proof}

\subsection{Coarsening}

Given two partitions $\deco$ and $\deco'$, we say that $\deco$ is a \defn{coarsening} of $\deco'$ if every set in $\deco$ is a union of sets in $\deco'$.  
Suppose that $G=(V,E)$ has minimal degree at least $\delta n$. Let $\deco$ be a partition denoted by $V=V_1 \sqcup \ldots \sqcup V_k$, and let $\deco'$ be a partition $V=V'_1\sqcup\ldots\sqcup V'_\ell$ such that $\deco$ is a coarsening of $\deco'$. For each $i\in[k]$ we write $\deco_i$ for the partition of $V_i$ into sets of $\deco'$. 

\begin{definition} \label{def:good:coarse} For $\eps,\alpha\in(0,1)$ and $\beta>0$ we say that $\deco$ is an \defn{$(\eps,\alpha,\beta)$-good coarsening} of $\deco'$, if there exists some $\theta \in \left[\eps\left(\frac{\eps\alpha}{\ell^2}\right)^{2^\ell},\eps\right]$ for which the following conditions are satisfied.
	\begin{enumerate}
		\item  For every $i\in[k]$ we have that $H(\deco_i,\theta\beta)$ is connected ($H$ is defined in \cref{def:hp}).
		\item For every $i\in[k]$ we have $|E(V_i,V\setminus V_i)| \leq \theta^2 \beta\alpha$.
	\end{enumerate}
\end{definition}

\begin{lemma}\label{lem:dec:coarse}
	For any $\eps,\alpha\in (0,1)$ and any $\beta>0$, if $G=(V,E)$ is a finite graph and $\deco'$ is a partition of $V$, then there exists an $(\eps,\alpha,\beta)$-good coarsening of $\deco'$.
\end{lemma}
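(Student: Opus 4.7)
The plan is to define a decreasing sequence of thresholds and take, as the candidate coarsening, the partition $\deco^{(j)}$ of $V$ whose parts are the vertex sets of the connected components of $H(\deco',\theta_j\beta)$. Specifically, set $\theta_0:=\eps$ and inductively $\theta_{j+1}:=\eps\theta_j^2\alpha/\ell^2$. By construction each $\deco^{(j)}$ is a coarsening of $\deco'$, and for every part $V_i$ of $\deco^{(j)}$ the graph $H(\deco_i,\theta_j\beta)$ is precisely the connected component of $H(\deco',\theta_j\beta)$ spanning $V_i$, which is connected by definition. Hence condition (1) of \cref{def:good:coarse} is automatic with the choice $\theta=\theta_j$, and the entire task reduces to locating an index $j$ for which condition (2) holds and $\theta_j$ lies in the required range.

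The core technical step will be to show that each failure of condition (2) strictly decreases the number of parts. Suppose $\deco^{(j)}$ fails condition (2), so some part $V_i$ satisfies $|E(V_i,V\sm V_i)|>\theta_j^2\beta\alpha$. The boundary decomposes as $\sum_{V_p'\subseteq V_i,\,V_q'\subseteq V\sm V_i}|E(V_p',V_q')|$ with at most $\ell^2$ nonzero summands, so the pigeonhole principle yields a pair $(V_p',V_q')$ with $|E(V_p',V_q')|>\theta_j^2\beta\alpha/\ell^2$. Since $\eps<1$ this exceeds $\theta_{j+1}\beta=\eps\theta_j^2\beta\alpha/\ell^2$, so $(V_p',V_q')$ is an edge of $H(\deco',\theta_{j+1}\beta)$ joining two $\deco'$-parts that lay in different components of $H(\deco',\theta_j\beta)$. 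Consequently $\deco^{(j+1)}$ has strictly fewer parts than $\deco^{(j)}$.

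Since $\deco^{(0)}$ has at most $\ell$ parts, after at most $\ell-1$ such drops we either reach a coarsening that satisfies condition (2) or the trivial one-part partition $\{V\}$, which satisfies condition (2) vacuously because $|E(V,\es)|=0$. Let $j^*\le\ell-1$ denote the first index that works; I will return $\deco^{(j^*)}$ with $\theta=\theta_{j^*}$. To verify $\theta_{j^*}\in[\eps(\eps\alpha/\ell^2)^{2^\ell},\eps]$, solve the recursion to obtain $\theta_j=\eps^{2^{j+1}-1}(\alpha/\ell^2)^{2^j-1}$, which is decreasing in $j$; the upper bound $\theta_{j^*}\le\theta_0=\eps$ is then immediate, while $\theta_{j^*}\ge\theta_{\ell-1}=\eps^{2^\ell-1}(\alpha/\ell^2)^{2^{\ell-1}-1}$ comfortably exceeds the required lower bound $\eps(\eps\alpha/\ell^2)^{2^\ell}=\eps^{2^\ell+1}(\alpha/\ell^2)^{2^\ell}$, since the ratio of the two equals $\eps^{-2}(\ell^2/\alpha)^{2^{\ell-1}+1}\ge 1$.

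The main obstacle is intrinsic to this argument: a failure of condition (2) only guarantees that one of the at most $\ell^2$ cross-pairs of $\deco'$-parts carries weight of order $\theta_j^2\beta$ rather than $\theta_j\beta$, which forces the threshold to be \emph{squared} (not merely multiplied by a constant factor) at each iteration. Iterating this quadratic contraction up to $\ell-1$ times produces the doubly-exponential decay in $\ell$ and is precisely the source of the $2^\ell$ appearing in the exponent of the lower bound on $\theta$ in \cref{def:good:coarse}.
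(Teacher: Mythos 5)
Your proof is correct and follows the same iterated quadratic-threshold-reduction strategy as the paper: decrease the threshold quadratically when condition~(2) fails, and terminate within $\ell$ steps since each failure strictly decreases the number of parts. The only (minor, and arguably cleaner) difference is presentational: you take $\deco^{(j)}$ to be the connected-component partition of $H(\deco',\theta_j\beta)$ directly, so condition~(1) is automatic, whereas the paper builds the coarsening by merging one pair of parts at a time and verifies condition~(1) inductively.
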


\begin{proof}
	Let $\eps>0$. We will construct $\deco$, an $(\eps,\alpha,\beta)$-good coarsening of $\deco'$, which will satisfy conditions (1) and (2) of \cref{def:good:coarse} with some parameter $\theta$. At first, if $\deco'$ satisfies condition (2) with $\eps$ playing the role of $\theta$, then $\deco'$ is an $(\eps,\alpha,\beta)$ good coarsening of itself. Otherwise, we build recursively a finite sequence of length at most $\ell$ of coarsenings $\langle\deco_i\rangle$ of $\deco'$ and parameters $\langle\theta_i\rangle$, such that $\deco_i$ satisfies condition (1) with $\theta_i$. We set $\deco_1 := \deco'$ and $\theta_1 = \eps$.
	
	At step $m>1$, we are given with $\deco_{m-1}$ and $\theta_{m-1}$ such that $\deco_{m-1}$ satisfies condition (1) with parameter $\theta_{m-1}$. If $\deco_{m-1}$ also satisfies condition (2) with $\theta_{m-1}$, then $\deco_{m-1}$ is an $(\eps,\alpha,\beta)$ good coarsening and we halt the process, denoting $\deco := \deco_{m-1}$ and $\theta := \theta_{m-1}$. Otherwise, there exists a set $U$ in the partition $\deco_{m-1}$ which has $|E(U,V\setminus U)| \geq \theta_{m-1}^2\alpha\beta$. Since there are $\ell$ sets in $\deco'$, there exists at least one pair of sets $W_1 \subseteq U$ and $W_2 \subseteq V\setminus U$, both are sets of the partition $\deco'$, such that $|E(W_1,W_2)| \geq \theta_{m-1}^2\alpha\beta / \ell^2$. We denote 
	\begin{equation}\label{eq:theta:relation}
	\theta_m = \frac{\alpha}{\ell^2}\theta_{m-1}^2
	\end{equation}  
	and form $\deco_m$ by replacing $U$ and the set containing $W_2$ in $\deco_{m-1}$ with their union. We note that since we assumed that $\deco_{m-1}$ is a coarsening of $\deco'$ and satisfies condition (1) with $\theta_{m-1}$, then $\deco_m$ is also a coarsening of $\deco'$ which satisfies condition (1) with $\theta_m$. 
	
	Eventually, since the number of sets in $\deco'$ is $\ell$, this process halts within at most $\ell$ steps and we obtain an $(\eps,\alpha,\beta)$ good coarsening $\deco$ and $\theta$, a parameter satisfying $\theta \geq \theta_\ell$, with which conditions (1) and (2) are satisfied. Solving \eqref{eq:theta:relation} with initial condition $\theta_1 = \eps$, we obtain
	\begin{equation*}
	\theta_m = \eps\left(\frac{\eps\alpha}{\ell^2}\right)^{2^{m-1}-1}.
	\end{equation*} Therefore, we have
	\begin{equation*}
	\theta \geq \theta_{\ell} \geq \eps\left(\frac{\eps\alpha}{\ell^2}\right)^{2^{\ell-1}-1} \geq \eps\left(\frac{\eps\alpha}{\ell^2}\right)^{2^\ell},
	\end{equation*} 
	as required.
\end{proof}

\subsection{Proof of \cref{lem:main:deco}}
To prove \cref{lem:main:deco} we will show that with the right choice of $\alpha$ an $(\eps,\alpha,\beta)$-good coarsening of a $\delta$-primary decomposition is in fact a $(\eps,\delta,\beta)$-good decomposition.
\begin{proof}[Proof of \cref{lem:main:deco}]
	Let $G=(V,E)$ be a graph on $n$ vertices with minimal degree at least $\delta n$, let $\eps>0$ and let $0 < \beta\leq 240n^2/(\eps\delta^4)$. By \cref{cl:prim:deco}, there exists a $\delta$-primary decomposition of $V$. We denote this decomposition by $\deco'$. By \cref{lem:dec:coarse}, we obtain that there exists an $(\eps,\eps^9,\beta)$-good coarsening of $\deco'$, denoted by $\deco$. We also denote this coarsening explicitly by $V=V_1 \sqcup \ldots \sqcup V_k$ and we let $\theta$ be the parameter from \cref{lem:dec:coarse} to which the coarsening corresponds.
	
	We claim that $\deco$ is indeed an $(\eps,\delta,\beta)$-good decomposition satisfying conditions (3) and (5) of \cref{def:good:deco} with this $\theta$. We first note that for $\eps>0$ small enough and by the properties of a good coarsening
	\begin{equation*}
	\eps^{11\cdot 2^{2/\delta}} \leq \eps\left(\frac{\eps^{10}\delta^2}{4}\right)^{2^{2/\delta}} \leq \theta \leq \eps.
	\end{equation*}Conditions (1), (2) and (4) of \cref{def:good:deco} are immediate for every coarsening of a $\delta$-primary decomposition. Condition (5) is satisfied by condition (2) of the coarsening in \cref{def:good:coarse}. We are then left with verifying that condition (3) of \cref{def:good:deco} holds.
	To this end, we let $V_i$ be some set in $\deco$ and $V_i = V_{i,1}\sqcup\ldots\sqcup V_{i,\ell_i}$ be its partition to $\ell_i$ sets of $\deco'$ which we denote by $\deco_i$. 
	Note that for every $j\in[\ell_i]$ and for every $v\in V_{i,j}$ we have $\deg(v,V_{i,j}) \geq \delta^4n/40$. Also, by condition (4) of the primary decomposition, \cref{def:primary}, we have that the spectral gap corresponding to $G[V_{i,j}]$ is at least $\delta^{10}/2^{22}$. Finally, by the properties of the coarsening, the graph $H(\deco_i, \theta \beta)$ is connected. Hence, denoting $\gamma(G[V_i])$ for the spectral gap of $G[V_i]$ and using \cref{lem:dec:spec:gap} we get that
	\begin{equation}\label{eq:sp:gap:min:bound}
	\gamma(G[V_i]) \geq \min\left\{\frac{\delta^{10}}{2^{22}},\quad \frac{\delta^4 n}{40}\cdot\frac{\delta^{10}}{2^{22}}\cdot\theta \beta\cdot\frac{1}{6\ell_in^3}\right\}. 
	\end{equation}
	Since $\theta \leq \eps$ and $\beta\leq 240n^2/(\eps\delta^4)$ and $\ell_i \geq 1$ we learn that the minimum above is attained in the second term. As $\deco$ is a coarsening of a $\delta$-primary decomposition, we have that $\ell_i\leq 2/\delta$, hence,
	\begin{equation*}
	\gamma(G[V_i]) \geq \frac{\delta^4 n}{40}\cdot\frac{\delta^{10}}{2^{22}}\cdot\theta \beta\cdot\frac{1}{6\ell_in^3} \geq	\frac{\delta^{15}\theta\beta}{2^{31}n^2}
	\end{equation*}
and we deduce that condition (3) of \cref{def:good:deco} holds. 
\end{proof}

\section{Random walks and Wilson's algorithm on decomposed graphs}\label{sec:rwdeco}

In the rest of this paper on we take $\beta=n^{3/2}$ in the decomposition of \cref{sec:dec}. The main goal of this section is to prove the following estimate. 

\begin{theorem}\label{lem:paths:in:deco}
	For any $\delta\in(0,1]$ there exists $C = C(\delta)<\infty$ such that the following holds. 
	Let $G=(V,E)$ be a connected simple graph on $n$ vertices with minimal degree at least $\delta n$ and $\eps>0$. Denote by $V=V_1 \sqcup \ldots \sqcup V_k$ an $(\eps,\delta,n^{1.5})$-good decomposition of $G$ with parameter $\theta$ (as guaranteed to exist by \cref{lem:main:deco}). Then for every $i\in[k]$ there are two vertices $v_1^i, v_2^i \in V_i$ such that if $\cT$ is a uniform spanning tree of $G$ and $\varphi$ is the unique path between $v_1^i$ and $ v_2^i$ in $\cT$, then
	\begin{equation*}
	\pr\left(\eps^8\theta\sqrt{n} \leq |\varphi| \leq \frac{\sqrt{n}}{\theta\eps^{8}} \pand \varphi \subseteq V_i	 \right) \geq 1-C\eps^2.
	\end{equation*}
\end{theorem}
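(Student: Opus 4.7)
The plan is to sample $\varphi$ by Wilson's algorithm and analyze the loop-erased random walk using the structural properties promised by the good decomposition.

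For the choice of endpoints, I use that $|E(V_i, V\setminus V_i)| \le \eps^9 \theta^2 n^{3/2}$ and $|V_i| \ge \delta n/2$, so by averaging most vertices of $V_i$ satisfy $\deg(v, V\setminus V_i) = O(\eps^9 \theta^2 \sqrt n/\delta)$; I pick $v_1^i, v_2^i$ to be two such ``interior'' vertices. Ordering $v_1^i$ first and $v_2^i$ second in Wilson's algorithm, we obtain $\varphi = \LE(X[0,\tau])$, where $X$ is the simple random walk on $G$ from $v_2^i$ and $\tau$ is its hitting time of $v_1^i$. The analysis exploits that $G[V_i]$ has at least $\delta n/2$ vertices, minimum degree $\ge \delta^4 n/40$, and spectral gap $\gamma_i \ge \delta^{15}\theta/(2^{31}\sqrt n)$ (by conditions (3) and (4) of the good decomposition with $\beta = n^{3/2}$), so $G[V_i]$ is ``expander-like'' with $\gamma_i \sqrt n \gtrsim \theta$.

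The first technical ingredient is a LERW length bound inside $G[V_i]$: if $\widetilde X$ denotes the simple random walk on $G[V_i]$ from $v_2^i$ stopped at $v_1^i$, then $|\LE(\widetilde X)| \in [\eps^8\theta\sqrt n,\; \sqrt n/(\theta\eps^8)]$ with probability at least $1 - O(\eps^2)$. The upper bound of order $1/\gamma_i = O(\sqrt n/\theta)$ is derived from the Wilson/transfer-current identity $\pr(w \in \LE(\widetilde X)) \propto \pr_{v_2^i}(\tau_w < \tau_{v_1^i}) \cdot \pr_w(\tau_{v_1^i} < \tau_w^+)$, estimated via hitting and escape probabilities that follow from the spectral gap and from $\pi_i(v_1^i) \gtrsim 1/n$. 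The lower bound follows by summing this identity over a typical subset of $V_i$ (second moment) to show $\E|\LE(\widetilde X)| = \Omega(\theta\sqrt n)$ with controlled variance.

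The second and main obstacle is the containment $\varphi \subseteq V_i$. The walk $X$ on $G$ typically leaves $V_i$ many times before reaching $v_1^i$, since its hitting time is of order $n^{3/2}/\theta$ while its typical exit time from $V_i$ is only $\sim \sqrt n/(\eps^9\theta^2)$, so no naive coupling of $X$ with $\widetilde X$ survives for the whole walk. Instead I apply the Wilson identity to bound $\pr(w \in \LE(X))$ for $w \in V\setminus V_i$: summing over such $w$ and using $|\partial V_i| \le \eps^9\theta^2 n^{3/2}$ together with hitting-probability estimates obtained by electrical-network arguments using $\gamma_i$ and the small boundary, one concludes $\pr(\varphi \not\subseteq V_i) = O(\eps^2)$. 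On the good event $\{\varphi \subseteq V_i\}$, all of $X$'s excursions outside $V_i$ are loop-erased, so $\LE(X)$ is measurable with respect to the restricted walk $\widehat X$ on $V_i$; a short coupling between $\widehat X$ and $\widetilde X$ (whose transition kernels differ by $O(\eps^2)$ in total variation, again controlled by $|\partial V_i|$) then transfers the length bound to $|\varphi|$ and completes the proof.
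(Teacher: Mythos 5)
Your proposal runs Wilson's algorithm directly on $G$, taking $\varphi=\LE(X)$ where $X$ is the walk from $v_2^i$ stopped at $v_1^i$. This is a genuinely different route from the paper, and it is exactly the route the paper is designed to avoid: the authors introduce the network $G^\rho$ (adding a uniformly connected root $\rho$ with weights tuned so that $\tau_\rho$ is geometric with mean $\sqrt{n}/\theta\eps^4$, following Wilson and Peres--Revelle), and run Wilson's algorithm with ordering $(\rho, v_1^i, v_2^i)$. The sped-up walk hits $\rho$ in $\Theta(\sqrt n/\theta\eps^4)$ steps, which is smaller than the typical exit time from $V_i$, so the walk stays in $V_i$ until absorption; the geometric stopping time also gives the upper bound $|\varphi'|\le\sqrt n/\theta\eps^8$ for free by Markov's inequality, and the final fraction of the walk surviving loop-erasure gives the lower bound. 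Stochastic domination (\cref{lem:neg:ass:net}) then transfers the conclusion from $G^\rho$ back to $G$.

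The main gap in your proposal is precisely where this trick is doing the work. You correctly observe that the walk from $v_2^i$ to $v_1^i$ on $G$ has hitting time far exceeding the typical exit time from $V_i$, so it leaves $V_i$ many times and no coupling with the walk on $G[V_i]$ survives. Your workaround --- sum the Wilson-type identity $\pr(w\in\LE(X))=\pr_{v_2^i}(\tau_w<\tau_{v_1^i})\cdot\pr_w(\tau_{v_1^i}<\tau_w^+)$ over $w\notin V_i$ and control it by "electrical-network arguments using $\gamma_i$ and the small boundary" --- is not filled in, and it is not clear it can be. That identity requires Green-function and hitting-probability estimates for the walk on the \emph{whole} of $G$, not just on $G[V_i]$, and the global spectral gap of $G$ can be as small as $\Theta(1/n)$ (two cliques joined by one edge). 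In particular, for $w\notin V_i$ the first factor $\pr_{v_2^i}(\tau_w<\tau_{v_1^i})$ is typically close to $1$, so the bound must come entirely from the escape probability $\pr_w(\tau_{v_1^i}<\tau_w^+)$, which is a delicate quantity in a graph you only control piecewise. You give no calculation showing the sum is $O(\eps^2)$. Similarly, your upper bound $|\LE(\widetilde X)|\le \sqrt n/(\theta\eps^8)$ with probability $1-O(\eps^2)$ for the walk on $G[V_i]$ is asserted via transfer-current identities but not derived; in the paper this upper bound is almost trivial because the stopping time in $G^\rho$ is geometric. A final smaller issue: the stated "coupling between $\widehat X$ and $\widetilde X$ whose transition kernels differ by $O(\eps^2)$ in total variation" compares one-step kernels, but the walks run for $\Omega(\sqrt n)$ steps, so a per-step TV bound of $O(\eps^2)$ would not keep the coupling alive; the paper's corresponding statement (\cref{cl:stay:comp:good:deco}) bounds the probability of any boundary crossing over the entire time window, which is a stronger and more carefully calibrated estimate.

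In short, your approach sidesteps the paper's central device ($G^\rho$) and replaces it with an unproved potential-theoretic estimate on the whole graph $G$; this is where the real difficulty lies, and I don't see how the argument closes without something like the sped-up walk.
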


In \cref{subsec:prelimrw} we prove a couple of preliminary useful random walk estimates on graphs with linear minimal degree that do not involve the decomposition. In \cref{sec:rw:stay:in:comp}, we show that a random walk on $G$ stays inside one set of the decomposition for at least $\sqrt{n}$ steps with high probability; since the spectral gap of each set in the decomposition is at least of order $n^{-1/2}$ and the induced graph on the set has linear minimal degree, the random walk is mixed in this set (even though the mixing time of $G$ may be much larger than $\sqrt{n}$). We use this estimate in \cref{sec:rw} to prove the aforementioned \cref{lem:paths:in:deco}.

\subsection{Preliminary random walk estimates}\label{subsec:prelimrw}

It is a classical fact that the mixing time of the random walk on a connected graph $G$ is always $O(\gamma^{-1} \log n)$ where $n$ is the number of vertices and $\gamma=\gamma(G)$ is the spectral gap, see for instance \cite[Theorem 12.4]{LPW2}. This estimate is sharp as is seen on bounded degree expander graphs where the gap is $\Omega(1)$ but at least $\Omega(\log n)$ steps are needed for the walker to be able to reach the majority of the graph. However, when the minimal degree is linear, after a single step the location is already spread on a set of linear size and this estimate can be improved.

\begin{lemma}\label{lem:mix:spec}
	For any $\delta\in(0,1]$ there exists a constant $C = C(\delta)<\infty$ such that the following holds. For any simple graph $G$ on $n$ vertices with minimal degree at least $\delta n$ and any $\eps\in(0,1)$ we have
	\begin{equation*}
		\tmix^G(\eps) \leq C\log(1/\eps)\left(\gamma^{-1}(G) + \log(n)\right).
	\end{equation*} 
\end{lemma}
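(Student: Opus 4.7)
The plan is to split the mixing process into two phases: a \emph{spreading phase} of length $s = O_\delta(\log n)$ that uses only the minimal degree hypothesis, followed by a \emph{relaxation phase} of length $t = O_\delta(\gamma^{-1}\log(1/\eps))$ driven by the spectral gap. The key observation is that the $\log n$ factor appears only once and only additively: it is the cost of flattening the walk's distribution from a point mass to sup-norm scale $O(1/n)$, after which the spectral gap alone controls convergence to stationarity.

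For the spreading phase, write the lazy transition matrix as $\tilde P = \tfrac12(I + P)$, so that $\tilde P^s = \sum_{k=0}^{s}\binom{s}{k}2^{-s}P^k$. Since $\deg(w) \geq \delta n$ for every $w$, we have $P(w,u) \leq 1/(\delta n)$ uniformly, and a one-line induction extends this to $P^k(v,u) \leq 1/(\delta n)$ for every $k \geq 1$ and all $v,u$. Isolating the $k=0$ term yields
\[ \tilde P^s(v,u) \leq 2^{-s}\mathbf{1}\{v=u\} + \frac{1}{\delta n}. \]
Choosing $s:=\lceil \log_2(\delta n)\rceil$ makes this $\leq 2/(\delta n)$ uniformly in $v,u$. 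Combined with the trivial bound $\pi(u)\geq \delta/n$, this gives the chi-squared estimate
\[ \chi^2\bigl(\tilde P^s(v,\cdot),\pi\bigr) \;=\; \sum_u \frac{\tilde P^s(v,u)^2}{\pi(u)} \,-\, 1 \;\leq\; \frac{2}{\delta^2}. \]

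For the relaxation phase, I would invoke the standard $L^2$ contraction for reversible chains: $\tilde P$ is self-adjoint on $L^2(\pi)$ with nonnegative eigenvalues whose second-largest equals $1-\gamma/2$ (laziness halves the spectral gap, and nonnegativity of lazy eigenvalues rules out any contribution from $\lambda_n$), so the function $f_s := \tilde P^s(v,\cdot)/\pi - 1$, which lies in the orthogonal complement of the constants, satisfies $\|\tilde P^t f_s\|_{L^2(\pi)} \leq (1-\gamma/2)^t\|f_s\|_{L^2(\pi)} \leq e^{-\gamma t/2}\sqrt{2}/\delta$. Cauchy--Schwarz then bounds the total variation distance:
\[ \|\tilde P^{s+t}(v,\cdot) - \pi\|_{\tv} \;\leq\; \tfrac12\sqrt{\chi^2\bigl(\tilde P^{s+t}(v,\cdot),\pi\bigr)} \;\leq\; \frac{e^{-\gamma t/2}}{\delta\sqrt{2}}. \]
Choosing $t_0 := \lceil (2/\gamma)\log(2\sqrt2/\delta)\rceil$ makes this at most $1/4$, so $\tmix^G(1/4) \leq s + t_0 \leq C_1(\delta)(\log n + \gamma^{-1})$. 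Finally, the sub-multiplicative estimate \eqref{eq:exp:mix} gives $\tmix^G(\eps) \leq \log_2(\eps^{-1})\tmix^G(1/4) \leq C(\delta)\log(1/\eps)(\log n + \gamma^{-1})$, as required. The only mildly delicate ingredient is the $L^2$ contraction identity, but this is standard once reversibility and nonnegativity of lazy eigenvalues are in place; the remaining calculation is bookkeeping, and I do not anticipate any real obstacles.
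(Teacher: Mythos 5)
Your proof is correct and follows essentially the same two-phase strategy as the paper's: bound the transition kernel pointwise by $O(1/(\delta n))$ after $O(\log n)$ lazy steps (you via the binomial expansion of $\tilde P^s$, the paper by conditioning on the lazy/non-lazy pattern — the same estimate either way), deduce a $\chi^2$ bound of $O(1/\delta^2)$, and then run the $L^2$ spectral-gap contraction. The only cosmetic difference is that you first establish $\tmix(1/4)$ and then invoke \eqref{eq:exp:mix}, whereas the paper carries the $\eps$-dependence through the contraction step directly; both yield the stated bound.
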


\begin{proof}
Let $P$ be the transition matrix of the lazy random walk on $G$. Recall that $P$ is a self-adjoint operator $P:L^2(\pi)\to L^2(\pi)$ where $\pi(v)=\deg_G(v)/2|E(G)|$ is the stationary distribution. We denote the eigenvalues of $P$ by $1 = \lambda_1 \geq \ldots \geq \lambda_n\geq 0$ and $\gamma(P) = 1-\lambda_2$ and by $\ind$ the all $1$ vector which is the eigenvalue corresponding to the eigenvalue $1$.  
Let $\mu$ be any probability measure on $V$ and write $f$ for the vector $f(v)=\mu(v)/\pi(v)$. We have that $f-\ind$ is orthogonal to $\ind$ and for any integer $t\geq 1$ we have that $\pi(v) Pf(v) = \pr_\mu(X_t = \cdot)$ so in particular $P^t f - \ind$ is orthogonal to $\ind.$ Hence
\begin{equation}\label{eq:mixing:time:second:ev}
		\left\| P^t f - \ind  \right\|_2 = \left\| P^t\left(f - \ind\right) \right\|_2  
		\leq \lambda_2^t\left\|f - \ind \right\|_2 .	
\end{equation}
We rewrite this as 
	\begin{equation*}
		\left\| \frac{\pr_\mu(X_t = \cdot)}{\pi(\cdot)} - \ind \right\|_2 \leq \lambda_2^t \left\|f - \ind \right\|_2.
	\end{equation*} 
	We now claim that for any $v\in V$ and every $u\in V$ we have that
	\begin{equation*}
		\pr_v(X_{\ceil{\log_2(n)}} = u) \leq \frac{2}{\delta n}.
	\end{equation*}
	Indeed, if the random walker made a non-lazy step at some time in $\{1, \ldots, \ceil{\log_2(n)}\}$, then the probability to be at any vertex $u$ at time $\ceil{\log_2(n)}$ is bounded by $1/(\delta n)$. On the other hand, the probability of staying put $\ceil{\log_2(n)}$ steps is at most ${1 \over n}$. Since ${\delta \over n} \leq \pi(\cdot) \leq {1 \over \delta n}$ it follows that
	\begin{equation}\label{eq:boun:on:logn}
		\left\| \frac{\pr_v(X_{\ceil{\log_2(n)}} = \cdot)}{\pi(\cdot)} - \ind \right\|_2^2  \leq \frac{2}{\delta^2}.
	\end{equation}
	Using \eqref{eq:mixing:time:second:ev} and \eqref{eq:boun:on:logn}, for $t = \ceil{\log_2(n)} + \log(\sqrt{2}/\eps \delta)\gamma^{-1}$ and every $v\in V$ we have
	\begin{equation*}
		\left\| \frac{\pr_v(X_t = \cdot)}{\pi(\cdot)} - \ind \right\|_2 \leq \lambda_2^{t-\ceil{\log_2(n)}}\left\| \frac{\pr_v(X_{\ceil{\log_2(n)}} = \cdot)}{\pi(\cdot)} - 1 \right\|_2 \leq \frac{\sqrt{2}}{\delta}(1-\gamma)^{t-\ceil{\log_2(n)}} \leq \eps. \end{equation*} By \cite{LPW2}*{Lemma 12.18} we have that
	\begin{equation*}
		2\|\vect{p}_t(v,\cdot) - \pi(\cdot)\|_{\dtv} \leq \left\| \frac{\pr_v(X_t = \cdot)}{\pi(\cdot)} - 1 \right\|_2 \leq \eps \, ,
	\end{equation*}
concluding our proof.
\end{proof}

\begin{claim}\label{cl:hit:large:set}
	For any $\delta>0$ there exists $c = c(\delta)>0$ such that the following holds. For any $\eps\in(0,c)$, any simple graph $G = (V,E)$ on $n\geq \eps^{-2}$ vertices with minimal degree at least $\delta n$, any $U\subset V$ with $|U| \geq \eps\sqrt{n}$ and any vertex $v\in G$ 
	\begin{equation*}
		\pr_v(X[0,2(\tmix^G(\eps/2)+\floor{\sqrt{n}})] \cap U = \es) \leq 1-\frac{\eps\delta}{4} \, ,
	\end{equation*}
	where $X$ is the simple random walk on $G$.
\end{claim}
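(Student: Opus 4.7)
The plan is to reduce to a walk from stationarity via the TV-mixing coupling, then bound the hitting probability by a first-moment-over-maximum estimate on the number of visits of the walk to $U$ during the subsequent $\sim\sqrt n$ steps. Write $T_0 := \tmix^G(\eps/2)$ and $T := 2(T_0+\lfloor\sqrt n\rfloor)$. The essential numerical input is
\[
\pi(U) \;=\; \frac{\vol(U)}{2|E(G)|} \;\geq\; \frac{|U|\,\delta n}{n^2} \;\geq\; \frac{\eps\delta}{\sqrt n},
\]
using $|U|\geq\eps\sqrt n$, minimum degree $\delta n$ and $2|E(G)|\leq n^2$; hence the expected number of visits of a stationary walk to $U$ in $2\sqrt n$ steps is already $\geq 2\eps\delta$, matching up to constants the bound we want. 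The argument splits on whether $\pi(U)$ is large or small.

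If $\pi(U)\geq \eps$, the TV-mixing estimate alone suffices: by the definition of $T_0$, $\pr_v(X_{T_0}\in U)\geq \pi(U)-\eps/2\geq \eps/2\geq \eps\delta/4$, and the claim follows since $T_0\leq T$.

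Otherwise, $\pi(U)<\eps$. Couple $X$ with a walk $Y$ from $\pi$ so that $X_t=Y_t$ for all $t\geq T_0$ with probability at least $1-\eps/2$, and let $N$ be the number of $t\in[T_0,T]$ with $X_t\in U$. The coupling gives $\E_v[N]\geq (1-\eps/2)\E_\pi[N_Y]\geq \sqrt n\,\pi(U)\geq \eps\delta$, while the strong Markov property at the first visit of $X$ to $U$ yields $\E_v[N\mid N\geq 1]\leq 1+\max_u\E_u[N']$, where $N'$ counts visits to $U$ of a walk started from $u$ over a window of length $T-T_0$. The substantive task is to bound $\max_u\E_u[N']\leq C(\delta)$. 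The key observation is that under $\pi(U)<\eps$ we have $|U|\leq \pi(U)\,n/\delta<\eps n/\delta\leq \delta n/2$ for $\eps\leq \delta^2/2$ (absorbed into the threshold $c(\delta)$), so every $u\in U$ satisfies $\deg(u,U^c)\geq \delta n/2$, and hence the lazy walk exits $U$ from $u$ with probability at least $\delta/4$ per step. Visits to $U$ therefore come in geometric-length clusters of mean $\leq 4/\delta$, while by Kac's formula the expected return time to $U$ from stationarity in $U$ is $1/\pi(U)\geq \sqrt n/(\eps\delta)$, which controls the typical excursion lengths in $U^c$. Combining these yields $\max_u\E_u[N']\leq C(\delta)$, and we conclude
\[
\pr_v(T_U\leq T)\;\geq\;\pr_v(N\geq 1)\;\geq\;\frac{\E_v[N]}{1+\max_u\E_u[N']}\;\geq\;\frac{\eps\delta}{1+C(\delta)}\;\geq\;\frac{\eps\delta}{4}
\]
after further shrinking $c(\delta)$ if necessary.

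The main obstacle is the bound $\max_u\E_u[N']\leq C(\delta)$: a naive split of $[0,T-T_0]$ into ``pre-'' and ``post-mixing'' phases would incur both a factor of $T_0$ (from the crude bound $\pr_u(X_t\in U)\leq 1$ for $t<T_0$) and a factor of $\sqrt n\eps$ (from summing the post-mixing TV error $\eps/2$ per step), neither of which is acceptable. The block decomposition via per-step exit probabilities and Kac's formula for the return time circumvents both, and is where the minimum-degree assumption enters the argument in an essential way.
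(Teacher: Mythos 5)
Your overall strategy (first moment for the walk at stationarity plus a bound on the conditional expectation of the number of visits given at least one) is a legitimate alternative to the paper's Paley--Zygmund argument, and the case split on $\pi(U)$ is a nice touch. However, there are two genuine gaps, both stemming from working with the full post-mixing window $[T_0,T]$ of length $T-T_0 = T_0 + 2\lfloor\sqrt n\rfloor$ rather than a window of length $\lfloor\sqrt n\rfloor$.

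First, the inequality $\E_v[N]\geq(1-\eps/2)\E_\pi[N_Y]$ does not follow from a TV-coupling with failure probability $\eps/2$. Writing $a(y)$ for the coupling mass with $\sum_y a(y)\geq 1-\eps/2$ and $a(y)\leq\pi(y)$, the coupling only yields $\E_v[N]\geq\sum_y a(y)\E_y[N_Y]=\E_\pi[N_Y]-\sum_y(\pi(y)-a(y))\E_y[N_Y]$, and the error term can only be bounded by $(T-T_0+1)\cdot\eps/2$, giving $\E_v[N]\geq(T-T_0+1)(\pi(U)-\eps/2)$, which is vacuous in the case $\pi(U)<\eps/2$ you are trying to treat. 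This is precisely why the paper couples at time $\tmix^G(\eps^2)$ rather than $\tmix^G(\eps/2)$ (using $\tmix^G(\eps^2)\leq 2\tmix^G(\eps/2)$, so this time still fits inside $[0,T]$): the coupling failure probability $\eps^2$ is then negligible against the target bound $\eps\delta/4$, and one can afford to simply subtract it rather than try to control it multiplicatively.

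Second, the asserted bound $\max_u\E_u[N']\leq C(\delta)$ over a window of length $T-T_0$ is false. Take $G$ to be two cliques of size $n/2$ joined by a single edge, so that $\tmix^G(\eps/2)=\Theta(n^2)$, and take $U$ to be $\lceil\eps\sqrt n\rceil$ vertices in one clique. Starting from $u$ in that clique, the walk stays there for $\Theta(n^2)$ steps and visits $U$ roughly $n^2\cdot|U|/n\sim\eps n^{3/2}$ times. The Kac-formula heuristic does not save this: even granting it, the number of excursion renewals in a window of length $L$ scales like $L\pi(U)$, which is unbounded when $L\sim\tmix^G(\eps/2)$. The remedy is exactly as in the paper: after coupling at $\tmix^G(\eps^2)$, count visits only over a window of length $\lfloor\sqrt n\rfloor$. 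Then, after the reduction (which you also omit but which is needed) to $|U|=\lceil\eps\sqrt n\rceil$, the elementary per-step bound $\pr_u(X_t\in U)\leq|U|/(\delta n)$ for $t\geq 1$ immediately gives $\max_u\E_u[N']\leq 1+\eps/\delta$, and the first-moment-over-maximum argument closes without needing Kac's formula at all. With those three corrections your route would be sound and is a genuine alternative to the paper's Paley--Zygmund computation.
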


\begin{remark}\label{rmrk:tmix} We emphasize a potentially confusing point: $\tmix$ is defined in terms of the lazy random walk, but in this claim, as well as the rest of this paper, we study the \emph{non-lazy} random walk running for times depending on $\tmix$. 
\end{remark}

\begin{proof} We prove this for the \emph{lazy} simple random walk and trivially it follows for the usual random walk. Without loss of generality we may assume that $|U|=\ceil{\eps \sqrt{n}}$, otherwise we may take a subset of $U$ of that size. By \cref{eq:exp:mix} we have that $2(\tmix^G(\eps/2) + \floor{\sqrt{n}}) \geq \tmix^G(\eps^2) + \floor{\sqrt{n}}$ so it is then enough to bound from below the probability that $X$ hits $U$ within $\tmix^G(\eps^2) + \floor{\sqrt{n}}$ steps. Recall that 
	\begin{equation}\label{eq:paley:zygmund}
		\pr(Z>0) \geq \frac{\E^2[Z]}{\E[Z^2]} \, ,
	\end{equation}
	for any non-negative random variable $Z$. 
	Let $Y$ be a lazy random walk starting from the stationary distribution and define $Z = |\{t\in[0,\floor{\sqrt{n}}] \mid Y_t\in U\}|$. We will use \eqref{eq:paley:zygmund} to bound $\pr(Z>0)$ from below. Since $|U|\geq\eps\sqrt{n}$ and the minimal degree is $\delta n$, we have that $\pi(U) \geq \eps \delta n^{-1/2}$ and so $\E[Z] = (\floor{\sqrt{n}}+1) \pi(U) \geq \eps\delta$. 
	
	To bound $\E[Z^2]$, let $t<r$ be two positive integers. If $Y_t \in U$, there is a probability of $2^{-(r-t)}$ that the walker made $r-t$ lazy steps and then $Y_r = Y_t$. Else, since the minimal degree is at least $\delta n$, the probability that $Y_r \in U$ is bounded by $|U|/(\delta n)$. Therefore, 
	\begin{equation*}
		\pr(Y_t \in U, Y_r \in U) \leq \pr(Y_t \in U)\cdot\left(\frac{|U|}{\delta n} + \frac{1}{2^{r-t}}\right) = \pi(U) \cdot\left(\frac{|U|}{\delta n} + \frac{1}{2^{r-t}}\right).
	\end{equation*} 
	Hence, by summing over all $t\leq r\leq\floor{\sqrt{n}}$
	\begin{align*}
		\E[Z^2] &= \sum_{t=0}^{\floor{\sqrt{n}}} \pr(Y_t\in U) + 2\sum_{t=0}^{\floor{\sqrt{n}}}\sum_{r=t+1}^{\floor{\sqrt{n}}} \pr(Y_t\in U, Y_r\in U) 
		\\&\leq \E[Z] + 2\binom{\floor{\sqrt{n}}+1}{2}\frac{\pi(U)|U|}{\delta n} + 2\sum_{t=0}^{\floor{\sqrt{n}}}\pi(U)\sum_{r=t+1}^{\floor{\sqrt{n}}}\frac{1}{2^{r-t}}.
	\end{align*}
	Since $\E[Z] = (\floor{\sqrt{n}}+1)\pi(U)$ we upper bound the last term of the right-hand side by $2\E[Z]$. For the middle term we write
	\begin{align*}
		2\binom{\floor{\sqrt{n}}+1}{2}\frac{\pi(U)|U|}{\delta n} \leq (\floor{\sqrt{n}} + 1)\cdot\frac{\pi(U)|U|}{\delta \sqrt{n}}\leq \E[Z]\left(\frac{\eps\sqrt{n} +1}{\delta\sqrt{n}}\right)\leq \frac{1}{2}\E[Z],  
	\end{align*}
	where the last inequality holds for $\eps$ small enough. Therefore, for $\eps$ small enough we have that $\E[Z^2] \leq \frac{7}{2}\E[Z]$ and thus by \eqref{eq:paley:zygmund}
	\begin{equation*}
		\pr(Z>0) \geq \frac{2\E[Z]}{7} \geq \frac{2\eps\delta}{7}.
	\end{equation*}
	By the definition of $\tmix^G(\eps^2)$, if $X$ is a random walk starting from some $v\in V$, we have that $\dtv(X_{\tmix(\eps^2)},\pi) \leq \eps^2$. Therefore, we can couple the walk $X$ starting from time $\tmix^G(\eps^2)$ with an independent random walk $Y$ starting from the stationary distribution such that the walks coincide with probability larger than $1-\eps^2$. We thus obtain that for $\eps$ small enough
	\begin{equation*}
		\pr_v(X[\tmix^G(\eps^2), \tmix^G(\eps^2)+\floor{\sqrt{n}}]\cap U = \es) \geq \eps\delta/3.5 - \eps^2 \geq \eps\delta/4.\qedhere
	\end{equation*}
\end{proof}

\subsection{Random walks of length $\sqrt{n}$ stay in the same set of the decomposition}\label{sec:rw:stay:in:comp}

We now show that with high probability the random walker on a graph with linear minimal degree will stay in the same set of its $(\eps,\delta,n^{1.5})$-good decomposition that it walked to in its first step.

\begin{lemma}\label{cl:stay:comp:good:deco}
	Let $\delta\in(0,1]$, $\eps>0$ and $G=(V,E)$ be a simple graph on $n$ vertices with minimal degree at least $\delta n$. Also let $V=V_1 \sqcup \ldots \sqcup V_k$ be an $(\eps,\delta,n^{1.5})$-good decomposition of $G$ with parameter $\theta$ (as guaranteed to exist by \cref{lem:main:deco}). Then for any $C>0$ and any $i\in [k]$
	\begin{equation}\label{eq:stay:in:comp}
	\pr_v\left(\exists t\in\left[1,C\sqrt{n}\right]: X_t \in V_i, X_{t+1} \notin V_i\right) \leq \frac{C\theta^2\eps^9}{\delta^2} \, ,
	\end{equation}
	where $X$ is the simple random walk on $G$. Furthermore, for any $i\in [k]$ there exists a set $V_i'\subseteq V_i$ satisfying $|V_i'| \geq \delta^4 n / 80$ such that for every $v\in V_i'$  
	\begin{equation}\label{eq:good:vertex:stays:in:set}
		\pr_v\left(X\left[0,C\sqrt{n}\right] \subseteq V_i\right) \geq  1-\frac{80C\theta^2\eps^9}{\delta^6}.
	\end{equation}
\end{lemma}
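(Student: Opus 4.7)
My plan is to prove \cref{cl:stay:comp:good:deco} by a direct first-moment (Markov) argument based on a single crucial inequality: in a simple graph with minimal degree at least $\delta n$, the $t$-step heat kernel satisfies $p^t(v,u)\leq 1/(\delta n)$ for every $t\geq 1$ and every $v,u\in V$. This is immediate for $t=1$ since $p^1(v,u)\leq 1/\deg(v)\leq 1/(\delta n)$, and for $t\geq 2$ it follows from
$p^t(v,u)=\sum_w p^{t-1}(v,w)p(w,u)\leq (1/(\delta n))\sum_w p^{t-1}(v,w)=1/(\delta n)$,
using $p(w,u)\leq 1/\deg(w)\leq 1/(\delta n)$. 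This is the ``one-step smoothing'' phenomenon on dense graphs, and it removes the need to wait for mixing before summing over time.

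To establish \eqref{eq:stay:in:comp}, I will apply Markov's inequality to the random variable counting $V_i$-to-outside transitions in $[1,C\sqrt n]$. Writing the expectation as
$\sum_{t=1}^{C\sqrt n}\sum_{u\in V_i} p^t(v,u)\,\deg(u,V\setminus V_i)/\deg(u)$
and using both the key bound and the min-degree estimate $\deg(u)\geq\delta n$, this is at most
$(C\sqrt n/(\delta^2 n^2))\cdot |E(V_i,V\setminus V_i)|$. The fifth property of an $(\eps,\delta,n^{3/2})$-good decomposition gives $|E(V_i,V\setminus V_i)|\leq \eps^9\theta^2 n^{3/2}$, yielding the claimed bound $C\eps^9\theta^2/\delta^2$.

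For \eqref{eq:good:vertex:stays:in:set}, note that the event $\{X[0,C\sqrt n]\subseteq V_i\}$ additionally requires $X_1\in V_i$, which depends on the boundary degree of the starting vertex $v$. I will define
$V_i':=\{v\in V_i: \deg(v,V\setminus V_i)\leq 4\eps^9\theta^2 n^{3/2}/|V_i|\}$,
and use Markov's inequality on $\sum_{v\in V_i}\deg(v,V\setminus V_i)=|E(V_i,V\setminus V_i)|\leq \eps^9\theta^2 n^{3/2}$ to obtain $|V_i\setminus V_i'|\leq |V_i|/2$. Consequently $|V_i'|\geq |V_i|/2\geq \delta n/4\geq \delta^4 n/80$ (the last inequality using $\delta\in(0,1]$). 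For $v\in V_i'$, we have $\pr_v(X_1\notin V_i)=\deg(v,V\setminus V_i)/\deg(v)\leq 8\eps^9\theta^2/(\delta^2\sqrt n)$, and a union bound with \eqref{eq:stay:in:comp} gives
$\pr_v(X[0,C\sqrt n]\not\subseteq V_i)\leq C\eps^9\theta^2/\delta^2 + 8\eps^9\theta^2/(\delta^2\sqrt n)$,
which is comfortably inside $80C\theta^2\eps^9/\delta^6$ (absorbing the lower-order term for $n$ at least a constant depending on $C$).

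The proof is essentially bookkeeping once the key bound $p^t(v,u)\leq 1/(\delta n)$ is in hand; the main conceptual point is that linear minimum degree provides enough one-step spreading to bypass mixing-time considerations entirely for this estimate. No serious obstacle is anticipated; the only care required is in the choice of the threshold defining $V_i'$ so that the stated size lower bound $\delta^4 n/80$ holds while keeping the first-step exit probability within the target $\delta^6$ error.
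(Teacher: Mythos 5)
Your proof of \eqref{eq:stay:in:comp} matches the paper's exactly: both rest on the one-step smoothing bound $p^t(v,u)\le 1/(\delta n)$ and a first-moment/union bound over the at most $C\sqrt n$ possible exit times, combined with condition (5) of the decomposition. For \eqref{eq:good:vertex:stays:in:set}, however, you take a genuinely different (and equally valid) route. You define $V_i'$ explicitly as the set of $v\in V_i$ with small boundary degree $\deg(v,V\setminus V_i)$, bound its complement via Markov's inequality applied to $|E(V_i,V\setminus V_i)|$, and then combine a first-step estimate with the already-proved \eqref{eq:stay:in:comp}. The paper instead defines $V_i'$ implicitly as the set of $u\in V_i$ for which the target inequality $\pr_u(X[0,C\sqrt n]\subseteq V_i)\ge 1-80C\theta^2\eps^9/\delta^6$ already holds, and shows it is large by averaging $\pr_u(X[0,C\sqrt n]\not\subseteq V_i)$ over the at least $\delta^4 n/40$ neighbors of any fixed $v\in V_i$ that lie in $V_i$, reusing \eqref{eq:stay:in:comp} applied from $v$. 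Your approach buys an explicit, deterministic description of $V_i'$; the paper's is slightly more economical in that no separate first-step term appears and no side condition on $n$ or $C$ needs to be discussed.

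One place to tighten: you close by saying the bound holds after "absorbing the lower-order term for $n$ at least a constant depending on $C$," but the lemma is stated for every $C>0$ and every $n$. You do not actually need $n$ large. If $C\sqrt n<1$ then $\floor{C\sqrt n}=0$, so $X[0,C\sqrt n]=(X_0)=(v)\subseteq V_i$ almost surely and \eqref{eq:good:vertex:stays:in:set} is trivial. If $C\sqrt n\ge 1$ then $8\theta^2\eps^9/(\delta^2\sqrt n)\le 8C\theta^2\eps^9/\delta^2$, and the total $9C\theta^2\eps^9/\delta^2\le 80C\theta^2\eps^9/\delta^6$ since $\delta\le 1$. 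With that case split the argument is complete for all $n$ and $C$.
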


\begin{proof}
Let $i\in[k]$ and let $\theta$ be the parameter from the $(\eps,\delta,n^{1.5})$-good decomposition $\deco$. For every $t\geq 1$, we have 
	\begin{equation*}
	\pr(X_t \in V_i, X_{t+1}\not \in V_i) \leq \sum_{w\in V_i} \pr(X_t = w)\cdot \vect{p}(w,V_i^c) \leq \sum_{w\in V_i}\frac{1}{\delta n}\frac{|E(w,V_i^c)|}{\delta n} \leq \frac{|E(V_i,V_i^c)|}{\delta^2 n^2} \, .
	\end{equation*}
Hence by taking the union over $t\in[1,C\sqrt{n}]$ and using condition (5) of a $(\eps,\delta,n^{1.5})$-good decomposition (\cref{def:good:deco}) we immediately obtain \eqref{eq:stay:in:comp}.
To prove \eqref{eq:good:vertex:stays:in:set} let $C>0$ and fix some $v\in V_i$. By condition (4) of \cref{def:good:deco}, we have that $\deg(v,V_i) \geq \delta^4n/40$. By \eqref{eq:stay:in:comp} we have
		\begin{equation*}
		\pr_v(X_1\in V_i \pand X\left[1,C\sqrt{n} + 1\right] \not\subseteq V_i) \leq \frac{C\theta^2\eps^9}{\delta^2}.
		\end{equation*}
Yet on the other hand,
		\begin{align*}
		&\pr_v(X_1 \in V_i \pand X[1,C\sqrt{n}] \not\subseteq V_i) \geq \frac{1}{n} \sum_{u\sim v, u\in V_i} \pr_u\left(X[0,C\sqrt{n}] \not\subseteq V_i\right).
		\end{align*}
Thus the number of $u \in V_i$ with $u\sim v$ satisfying 
\begin{equation*}
		\pr_u\left(X[0,C\sqrt{n}] \not\subseteq V_i\right) \geq \frac{80C\theta^2\eps^9}{\delta^6}.
\end{equation*}
cannot be larger than $\delta^4n/80$. Since $\deg(v,V_i) \geq \delta^4n/40$ we conclude the proof of \eqref{eq:good:vertex:stays:in:set}.
\end{proof}

\subsection{$\LERW$s and Wilson's Algorithm on the decomposed graph}\label{sec:rw}

We now proceed to the proof of \cref{{lem:paths:in:deco}}. In the rest of this section we assume that $\delta\in(0,1]$ and $\eps>0$ are given, that $G = (V,E)$ is a connected simple graph on $n$ vertices with minimal degree $\delta n$ and that $V=V_1\sqcup \ldots \sqcup V_k$ is an $(\eps,\delta,n^{1.5})$-good decomposition with parameter $\theta$ as guaranteed to exist by \cref{lem:main:deco}. Lastly, note that if $\sqrt{n} \leq 1/ (\theta \eps^8)$, then \cref{{lem:paths:in:deco}} is trivial, so we assume the contrary. 

As described in \cref{subsec:prelim}, the $\UST$ path between two vertices of $G$ is distributed as the $\LERW$ between them. A recurring problem in analyzing the $\UST$ is that the random walk path between two vertices may be much longer than its loop-erasure, meaning that most of the random walk path is erased during the loop erasure. To overcome this obstacle we use the following idea which goes back to Wilson \cite{Wil96} and was used extensively by Peres and Revelle \cite{PR04+}. Let $G^\rho$ be the network obtained from $G=(V,E)$ by adding a vertex $\rho$, connecting it to each $v\in V$ and assigning edge weights
\begin{equation*}
w(v,\rho) = \frac{\theta\eps^4\deg_G(v)}{\sqrt{n}-\theta\eps^4}.
\end{equation*}
for any $v\in V$. An immediate calculation shows that with these edge weights the probability that the random walk starting from any $v\in V$ moves to $\rho$ in the first step is $ \theta \eps^{4} n^{-1/2}$ and so $\tau_\rho$ is a geometric random variable with expectation $\sqrt{n}/\theta\eps^{4}$. Furthermore, if $X$ is a simple random walk on this network, then conditioned on $\tau_{\rho} = m$, we have that $X[0,m-1]$ is distributed like a simple random walk on $G$ of length $m-1$. 
Thus, in $G^\rho$, the random walk typically takes $\sqrt{n}$ steps to hit $\rho$. It turns out that a positive fraction of such a walk survives the loop erasure with high probability, and is hence easier to analyze. To deduce information about the $\LERW$ in $G$ rather than $G^\rho$ we use \cref{lem:neg:ass:net} stating that $\UST(G)$ stochastically dominates $\UST(G^\rho)\cap E(G)$. 
Hence, if $v_1$ and $v_2$ are two distinct vertices and $\varphi$ and $\varphi'$ are the unique paths between them in $\UST(G)$ and $\UST(G^\rho)$, respectively, then 
\begin{equation}\label{eq:dtv:original:path}
\dtv(\varphi,\varphi') \leq \pr(\rho \in \varphi').
\end{equation}

The proof strategy of \cref{lem:paths:in:deco} is as follows. Fix some $i \in [k]$ and two vertices $v_1,v_2$ in $V_i$ which we will choose according to \eqref{eq:good:vertex:stays:in:set}. We run Wilson's algorithm (see \cref{subsec:prelim}) on $G^\rho$ where the first three vertices in the ordering of the vertices of $G^\rho$ are $(\rho,v_1,v_2)$. We will first show that with high probability the random walk from $v_1$ to $\rho$ stays within $V_i$ except for the last step (\cref{cl:hit:rho:earlier}) and that the length of its loop-erasure is at least $\eps\sqrt{n}$ (\cref{cl:first:lerw}); it is also unlikely to contain $v_2$. Next (\cref{lem:path:builder}) we show that conditioned on this first $\LERW$, with high probability the second $\LERW$ starting at $v_2$ hits the first $\LERW$ in a vertex different than $\rho$ and stays in $V_i$ until that visit. We will also show there that the second $\LERW$ is typically longer than $\eps^8\theta\sqrt{n}$. This gives a lower bound on $|\varphi'|$ and by \eqref{eq:dtv:original:path}  a lower bound for $|\varphi|$ is obtained.

\begin{claim}\label{cl:hit:rho:earlier}
	For any $i\in[k]$ there exists a set $V'_i\subseteq V_i$ with $|V'_i|\geq \delta^4 n / 80$ such that for every $v\in V_i'$, a simple random walk on $G^\rho$ starting from $v$ satisfies
	\begin{equation*}
	\pr_v(\tau_\rho < \tau_{V\setminus V_i}) \geq 1-C\eps^2,
	\end{equation*} 
	for some $C = C(\delta)<\infty$.
\end{claim}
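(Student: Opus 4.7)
The plan is to take $V'_i$ to be precisely the set supplied by \cref{cl:stay:comp:good:deco} and to exploit the natural coupling between the simple random walk $X$ on $G^\rho$ and the simple random walk $Y$ on $G$. From the definition of $G^\rho$ one checks that at each step the walker moves to $\rho$ with probability exactly $p := \theta\eps^4/\sqrt{n}$ and otherwise takes a uniform step of the SRW on $G$. Hence we may realise $X$ by letting $Y$ evolve from $v$ and, independently at each step, with probability $p$ declaring that $X$ has jumped to $\rho$. Under this coupling $\tau_\rho$ is geometric with mean $\sqrt{n}/(\theta\eps^4)$ and is independent of $Y$, and the event $\{\tau_{V\setminus V_i}\le\tau_\rho\}$ for $X$ coincides with the event that $Y$ leaves $V_i$ strictly before time $\tau_\rho$.

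Set the threshold $T := \sqrt{n}/(\theta\eps^5)$. The geometric tail bound gives
\[
\pr(\tau_\rho > T) \le (1-p)^T \le e^{-pT} = e^{-1/\eps},
\]
which is much smaller than $\eps^2$ once $\eps$ is below an absolute constant. A union bound then yields
\[
\pr_v\bigl(\tau_{V\setminus V_i}\le\tau_\rho\bigr) \le \pr(\tau_\rho > T) + \pr_v\bigl(Y[0,T]\not\subseteq V_i\bigr).
\]
The second term is estimated by \eqref{eq:good:vertex:stays:in:set} of \cref{cl:stay:comp:good:deco} applied with parameter $C=1/(\theta\eps^5)$, so that $C\sqrt{n}=T$: for every $v\in V'_i$ this gives
\[
\pr_v\bigl(Y[0,T]\not\subseteq V_i\bigr) \le \frac{80\theta\eps^4}{\delta^6} \le \frac{80\eps^5}{\delta^6},
\]
where the last inequality uses the bound $\theta\le\eps$ that is built into the definition of an $(\eps,\delta,n^{3/2})$-good decomposition.

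Combining the two estimates gives
\[
\pr_v(\tau_\rho < \tau_{V\setminus V_i}) \ge 1 - e^{-1/\eps} - \frac{80\eps^5}{\delta^6} \ge 1 - C(\delta)\eps^2
\]
for a suitable $C(\delta)=O(\delta^{-6})$, as required. There is no substantive obstacle here: the claim is essentially a routine combination of the ``does not leave $V_i$'' estimate of \cref{cl:stay:comp:good:deco} with the geometric tail of $\tau_\rho$. The only delicate point is calibrating $T$ so that $\pr(\tau_\rho>T)$ is already $o(\eps^2)$ while the window $[0,T]$ is short enough for \eqref{eq:good:vertex:stays:in:set} to still deliver an $\eps^2$-size error; the slack $\theta\le\eps$ afforded by a good decomposition is exactly what makes this calibration close.
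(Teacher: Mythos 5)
Your proposal is correct and follows essentially the same strategy as the paper: take $V'_i$ from \cref{cl:stay:comp:good:deco}, observe that conditioned on $\tau_\rho$ the pre-$\rho$ portion of the walk on $G^\rho$ has the law of the SRW on $G$, bound the tail of $\tau_\rho$, and combine with a union bound. The only difference is cosmetic: you use the exact geometric tail $\pr(\tau_\rho>T)\le e^{-pT}$, which lets you get away with the shorter threshold $T=\sqrt{n}/(\theta\eps^5)$, whereas the paper applies Markov's inequality with $T=\sqrt{n}/(\theta\eps^6)$; both give the required $O(\eps^2)$ bound.
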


\begin{proof} Apply \cref{cl:stay:comp:good:deco} with $C=1/\theta\eps^6$ to obtain a set $V'_i$ such that for every $v\in V_i'$, if $Y$ is a simple random walk on the graph $G$, then
	\begin{equation}\label{eq:stay in set bound}
	\pr_v(Y[0,\sqrt{n}/\theta\eps^6] \subseteq V_i) \geq 1-\frac{80\theta\eps^3}{\delta^6} \geq 1-C\eps^3.
	\end{equation}
	Also, for a random walk $X$ on $G^\rho$ we have $\E_v \tau_\rho = \sqrt{n}/\theta\eps^{4}$ and thus Markov's inequality gives 
	\begin{equation}\label{eq:avoid the sun bound}
	\pr_v(\tau_\rho \leq \sqrt{n}/\theta\eps^{6})\geq 1-\eps^2
	\end{equation} 
	Let $X$ be a random walk on $G^\rho$. Conditioned on $\tau_\rho$ the random path $X[0,\tau_\rho-1]$ has the distribution of a random walk on $G$. Hence combining \eqref{eq:stay in set bound} and \eqref{eq:avoid the sun bound} yields the desired result.
\end{proof}

\begin{claim}\label{cl:first:lerw}
	For any $i\in[k]$ there exists a set $V'_i\subseteq V_i$ with $|V'_i|\geq \delta^4 n / 80$ such that for every $v\in V_i'$ if $X$ is a simple random walk on $G^\rho$ starting at $v$ and stopped when hitting $\rho$, then 
	\begin{equation*}
	\pr_v(|\LE(X)|  \geq \eps\sqrt{n} \pand \LE(X) \subseteq V_i\cup\{\rho\}) \geq 1-C\eps^2,
	\end{equation*}
	for some $C = C(\delta)<\infty$.
\end{claim}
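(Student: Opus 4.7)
My plan is to handle the two conclusions of the claim separately: the containment $\LE(X) \subseteq V_i \cup \{\rho\}$ and the length bound $|\LE(X)| \geq \eps \sqrt{n}$. The containment will follow essentially for free from \cref{cl:hit:rho:earlier}: I take $V_i'$ to be the set produced there, so that for every $v \in V_i'$ the event $A := \{\tau_\rho < \tau_{V \setminus V_i}\}$ has probability at least $1 - C\eps^2$. On $A$, the walk $X[0, \tau_\rho - 1]$ is contained in $V_i$, and hence $\LE(X[0, \tau_\rho]) \subseteq V_i \cup \{\rho\}$.

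For the length bound, I would set $k := \ceil{\eps \sqrt{n}}$ and show that, on the intersection of three events, the prefix $X_0, X_1, \ldots, X_k$ of the walk survives as a prefix of the loop erasure. The three events are: $A_1 = \{\tau_\rho > k\}$, $A_2 = \{X_0, X_1, \ldots, X_k \text{ are distinct vertices of } V\}$, and $A_3 = \{X[k+1, \tau_\rho] \cap \{X_0, \ldots, X_{k-1}\} = \emptyset\}$. A direct induction using the definition of $\LE$ shows that on $A_1 \cap A_2 \cap A_3$ one has $\LE(X[0,\tau_\rho])_j = X_j$ for each $j = 0, 1, \ldots, k$, and hence $|\LE(X[0, \tau_\rho])| \geq k + 1 \geq \eps \sqrt{n}$.

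The first two events are easy. Since $\tau_\rho$ is geometric with parameter $p = \theta\eps^4 / \sqrt{n}$, one has $\pr(A_1^c) \leq k p = O(\eps^5)$. For $A_2$, the minimum-degree assumption gives $\pr_w(X_t^G = u) \leq 1/(\delta n)$ for every $u, w \in V$ and $t \geq 1$ (by conditioning on the step leading into $u$, whose weight is at most $1/\deg(u) \leq 1/(\delta n)$), so a union bound over pairs $0 \leq i < j \leq k$ yields $\pr(A_2^c) \leq \binom{k+1}{2}/(\delta n) = O(\eps^2)$.

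The hard part will be $A_3$. A naive union bound over the $k$ target vertices, using the Green's-function estimate $\pr_w(\tau_u < \tau_\rho) \leq G^{G^\rho}(w, u)/G^{G^\rho}(u, u) \leq 1/(p \delta n) = O(1/(\theta\eps^4 \delta\sqrt{n}))$, yields only $O(k/(\theta\eps^4 \delta\sqrt{n})) = O(1/(\theta\eps^3 \delta))$, a constant in $\eps$ rather than the required $O(\eps^2)$. To improve this, I would exploit that the random target set $\{X_0, \ldots, X_{k-1}\}$ is itself a walk trajectory rather than an arbitrary set of $k$ vertices, and combine this with the strong mixing inside $V_i$ coming from the decomposition---recall $\gamma(G[V_i]) \gtrsim \theta/\sqrt{n}$ from \cref{def:good:deco}, so the walk mixes in $V_i$ within $O(\sqrt{n}/\theta)$ steps by \cref{lem:mix:spec}---to obtain a sharper second-moment estimate on the number of visits of $X[k+1, \tau_\rho]$ to the set $\{X_0, \ldots, X_{k-1}\}$, exploiting the near-uniformity of the walker after it has mixed in $V_i$ by time $k$.
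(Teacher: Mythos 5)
Your handling of the containment $\LE(X)\subseteq V_i\cup\{\rho\}$ matches the paper's: both cite \cref{cl:hit:rho:earlier} and take $V_i'$ from there. The problem is the length bound. The event $A_3 = \{X[k+1,\tau_\rho]\cap\{X_0,\ldots,X_{k-1}\}=\es\}$ that you need is \emph{not} a high-probability event, and this cannot be repaired by a cleverer second-moment argument. The walk runs for $\approx\sqrt{n}/(\theta\eps^4)$ steps after time $k$, and at each such step the chance of landing in the prefix set (of size $k\approx\eps\sqrt{n}$) is $\approx \eps\sqrt{n}/(\delta n)$, so the \emph{expected} number of returns to the prefix is $\approx 1/(\theta\eps^3\delta)\gg 1$. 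Since the chain mixes inside $V_i$ in time $O(\sqrt{n}/\theta)$, which is tiny compared to $\sqrt{n}/(\theta\eps^4)$, these returns are close to independent over $\Theta(1/\eps^4)$ consecutive mixing windows, so $\pr(A_3^c)$ is in fact bounded away from $0$ (close to $1$), not $O(\eps^2)$. In other words, the first $\eps\sqrt{n}$ steps typically do \emph{not} survive the loop erasure intact, and the number you computed, $O(1/(\theta\eps^3\delta))$, is essentially the truth rather than a slack upper bound.

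The paper avoids this by arguing about the \emph{last} $\eps\sqrt{n}$ steps of the walk before $\tau_\rho$, conditioning on $\tau_\rho$ and on $X[0,\tau_\rho-\eps\sqrt{n})$, and then splitting into two cases. If the loop erasure of that long prefix is already at least $\eps\sqrt{n}$ long, its first $\eps\sqrt{n}$ vertices $U$ can only be destroyed by the remaining $\eps\sqrt{n}$ steps of the walk, and a simple union bound ($\eps\sqrt{n}$ steps each hitting the size-$\eps\sqrt{n}$ set $U$ with probability $\leq 1/(\delta n)$) gives a failure probability $\eps^2/\delta$. If instead the prefix loop erasure is short, then the last $\eps\sqrt{n}$ steps, which are self-avoiding and avoid the short prefix loop erasure with probability $1-O(\eps^2/\delta)$, form a long suffix of $\LE(X)$. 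The crucial point in both cases is that the ``threat window'' to the surviving portion of the loop erasure is only $\eps\sqrt{n}$ steps, not $\sqrt{n}/(\theta\eps^4)$; by orienting the argument towards the end of the walk you shrink the time over which you must take a union bound by a factor of $\theta\eps^5$, which is exactly what rescues the $O(\eps^2)$ error. If you rewrite your proof around the last $\eps\sqrt{n}$ steps in this way, the rest of your estimates go through.
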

\begin{proof}
	As in the previous proof, for every $v\in V_i$ we have that $\pr(\tau_{\rho} > \sqrt{n}) \geq 1-\eps^4$. We condition on this event on $\tau_\rho$ and on the first $\tau_\rho - \eps \sqrt{n}$ steps of the random walk. Let us assume first that 
	\begin{equation*}
	|\LE(X[0,\tau_{\rho} - \eps\sqrt{n}))| \geq \eps\sqrt{n}.
	\end{equation*}
	In this case we denote by $U$ the first $\eps \sqrt{n}$ vertices of $\LE(X[0,\tau_{\rho} - \eps\sqrt{n}))$. 
	As explained earlier, under this conditioning the random walk at times $[\tau_{\rho} - \eps\sqrt{n},\tau_{\rho})$ is distributed as a unconditional random walk. Hence, since the minimal degree of $G$ is at least $\delta n$, the probability that $X_t\in U$ for some $t\in [\tau_{\rho} - \eps\sqrt{n},\tau_{\rho}]$ is at most $\eps^2/\delta$. If this does not occur, then the set $U$ survives the loop erasure. It follows that
	\begin{equation*}\label{eq:lerw:prefix:long:total:short}
	\pr\left(	|\LE\left(X[0,\tau_{\rho} - \eps\sqrt{n}] \right)| \geq \eps\sqrt{n} \pand |\LE(X)| \leq \eps\sqrt{n} \right) \leq \eps^4 + \eps^2/\delta \leq C\eps^2 ,
	\end{equation*}
	for some $C=1+\delta^{-1}$. In the second case $|\LE(X[0,\tau_{\rho} - \eps\sqrt{n}))| \leq \eps\sqrt{n}$. In this case the last $\eps\sqrt{n}$ steps of $X$ will survive the loop erasure high probability. Indeed, by Markov's inequality and since the degrees are at least $\delta n$ we deduce that the walk $X[\tau_{\rho} - \eps\sqrt{n},\tau_{\rho}]$ has no loops with probability at least $1-\eps^2/\delta$. By the same reasoning, the walk $X[\tau_{\rho} - \eps\sqrt{n},\tau_{\rho}]$ does not visit  $|\LE(X[0,\tau_{\rho} - \eps\sqrt{n}))|$ with probability at least $1-\eps^2/\delta$. On these two events $\LE(X)$ contains $X[\tau_{\rho} - \eps\sqrt{n},\tau_{\rho}]$. 
	It follows that
	\begin{equation*}\label{eq:lerw:prefix:short:total:short}
		\pr(	|\LE(X[0,\tau_{\rho} - \eps\sqrt{n}))| \leq \eps\sqrt{n} \pand |\LE(X)| \leq \eps\sqrt{n} ) \leq \eps^4 + 2\eps^2/\delta \leq C\eps^2  ,
	\end{equation*}
	for some $C=1+2\delta^{-1}$. Combining the last two inequalities and using \cref{cl:hit:rho:earlier} finishes the proof.
\end{proof}

\begin{lemma}\label{lem:path:builder}
For any $i\in[k]$ there exist two distinct vertices $v_1, v_2 \in V_i$ such that the following holds. Let $X$ be a simple random walk in $G^\rho$ starting at $v_1$ and stopped when hitting $\rho$, and conditioned on $X$, let $Y$ be an independent simple random walk on $G^\rho$ starting at $v_2$ and stopped when hitting $\LE(X)$. Then,
	\begin{equation}\label{eq:second:path:is:good}
	\pr\left(Y_{\tau_{\LE(X)}} \neq \rho \pand |\LE(Y)| \geq \eps^8\theta\sqrt{n} \pand \LE(Y)\subseteq V_i)\right) \geq 1-C\eps^2,
	\end{equation}
	for some $C = C(\delta)<\infty$.
\end{lemma}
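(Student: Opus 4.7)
The plan is to take $v_1, v_2$ to be two distinct vertices of the subset $V_i' \subseteq V_i$ of size $\geq \delta^4 n/80$ furnished by \cref{cl:stay:comp:good:deco} and \cref{cl:first:lerw}; the specific choice of $v_2$ is additionally required to make $\pr(v_2 \in \LE(X))$ small, which is possible by an averaging argument since $\E|\LE(X)|/|V_i'|$ is $O(1/(\theta\eps^4\sqrt n))$ (constants depending on $\delta$ absorbed throughout). Write $T := \eps^8 \theta \sqrt n$ and $L := \sqrt n/(\theta\eps^6)$, and note that $T \geq 1$ under the standing assumption $\sqrt n > 1/(\theta\eps^8)$. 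Then \cref{cl:first:lerw} combined with Markov's inequality applied to the geometric variable $\tau_\rho$ (of mean $\sqrt n/(\theta\eps^4)$) ensures that with probability $\geq 1 - O(\eps^2)$ the walk $X$ satisfies $\LE(X) \subseteq V_i \cup \{\rho\}$, $|\LE(X)| \geq \eps\sqrt n$, and $|X| \leq L$. I condition on these events throughout.

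For the walk $Y$ on $G^\rho$ starting at $v_2$, \eqref{eq:good:vertex:stays:in:set} applied with $C = 1/(\theta\eps^6)$ combined with Markov for $\tau_\rho^Y$ yields $Y[0, L] \subseteq V_i \cup \{\rho\}$ with probability $\geq 1 - O(\eps^2)$. Since every vertex of $G$ has at least $\delta n$ neighbours, one obtains the uniform one-step estimate $\pr_{v_2}(Y_t = w) \leq 1/(\delta n)$ for every $t \geq 1$ and every $w \in V$; for $t \geq 2$ this iterates via strong Markov. A union bound then gives
\begin{equation*}
\pr\bigl(\tau^Y_{\LE(X)} \leq 2T\bigr) \leq 2T \cdot \frac{|\LE(X)| + 1}{\delta n} = O(\eps^2),
\end{equation*}
so $\tau := \tau^Y_{\LE(X)} > 2T$ with the desired probability. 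To exclude $Y_\tau = \rho$, I couple $Y$ with the walk $\tilde Y$ on $G$ that coincides with $Y$ until its first $\rho$-jump. By \cref{cl:hit:large:set} iterated over blocks of length $O(\sqrt n/\theta)$ (the mixing-time bound coming from \cref{lem:mix:spec} together with the spectral-gap estimate in condition (3) of \cref{def:good:deco}), the expected hitting time $\E\tau^{\tilde Y}_{\LE(X) \cap V_i}$ is $O(\sqrt n/(\theta\eps))$, and multiplying by the per-step $\rho$-probability $\theta\eps^4/\sqrt n$ produces $\pr(Y_\tau = \rho) = O(\eps^3) = O(\eps^2)$. Together with the containment in $V_i \cup \{\rho\}$, this gives $\LE(Y) \subseteq V_i$ on the good events.

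The main obstacle is the bound $|\LE(Y)| \geq T$. I claim that on the event
\begin{equation*}
A := \bigl\{Y[0, 2T] \text{ is self-avoiding}\bigr\} \cap \bigl\{Y[2T+1, L] \cap Y[0, 2T-1] = \es\bigr\},
\end{equation*}
together with the earlier $\{\tau \in (2T, L]\}$, the loop-erasure of $Y[0, \tau]$ begins with the prefix $(Y_0, Y_1, \ldots, Y_{2T-1})$: each vertex $Y_k$ with $k \in [0, 2T-1]$ is visited by $Y[0,\tau]$ only at time $k$, since self-avoidance on $[0, 2T]$ excludes equality with any $Y_j$, $j \in [0, 2T] \setminus \{k\}$, while the second event excludes visits during $(2T, L]$; hence its last-visit time is $k$ and the loop-erasure construction emits $Y_0, Y_1, \ldots, Y_{2T-1}$ in order, giving $|\LE(Y)| \geq 2T \geq T$. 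The same one-step estimate yields $\pr(Y_s = Y_t) \leq 1/(\delta n)$ for all $0 \leq s < t$ with $t \geq 1$, so the expected number of self-intersections in $Y[0, 2T]$ is at most $\binom{2T+1}{2}/(\delta n) = O(\eps^{16})$ and
\begin{equation*}
\pr\bigl(Y[2T+1, L] \cap Y[0, 2T-1] \neq \es\bigr) \leq L \cdot 2T / (\delta n) = O(\eps^2),
\end{equation*}
so both parts of $A$ hold with probability $\geq 1 - O(\eps^2)$. A union bound over all the listed events concludes the proof with some $C = C(\delta) < \infty$.
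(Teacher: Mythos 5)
Your overall structure mirrors the paper's: pick $v_1,v_2$ in the set $V_i'$ furnished by \cref{cl:first:lerw}, use \cref{lem:mix:spec} and condition (3) of \cref{def:good:deco} to control mixing within $G[V_i]$, use \cref{cl:hit:large:set} iteratively to show $Y$ hits $\LE(X)$ before jumping to $\rho$, and show the initial segment $Y[0,\eps^8\theta\sqrt n)$ survives the loop erasure. Your argument that $Y[0,2T)$ survives loop erasure (self-avoidance plus non-return on $[2T+1,L]$) and the averaging argument for $v_2\notin\LE(X)$ are both fine, and a harmless variant of what the paper does.

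There is, however, a genuine gap in your bound $\pr(Y_\tau=\rho)=O(\eps^3)$. You write that ``the expected hitting time $\E\tau^{\tilde Y}_{\LE(X)\cap V_i}$ is $O(\sqrt n/(\theta\eps))$'' by iterating \cref{cl:hit:large:set} over blocks of length $O(\sqrt n/\theta)$. But the mixing-time bound you cite applies to the random walk on $G[V_i]$, whose spectral gap is controlled by condition (3) of \cref{def:good:deco}; it does \emph{not} apply to $\tilde Y$, which is a walk on all of $G$. The walk $\tilde Y$ leaves $V_i$ with positive probability, and once outside, nothing you have set up bounds its return time — indeed $\gamma(G)$ could be as small as $\Theta(1/n)$ (two cliques joined by an edge), making $\tmix^G$ of order $n$ and the worst-case expected hitting time of a $\Theta(\sqrt n)$-sized set of order $n^{3/2}$. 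Even after multiplying by the small exit probability from \cref{cl:stay:comp:good:deco}, the contribution of the bad event to $\E\tau^{\tilde Y}$ need not be $O(\sqrt n/(\theta\eps))$. The fix, and what the paper actually does, is to replace the expectation estimate by a \emph{probability} estimate: introduce an auxiliary walk $Z$ on $G[V_i]$ and couple $Y$ to $Z$ for a bounded time window of length $2A(\tmix^{G[V_i]}(\eps/2)+\floor{\sqrt n})$ with coupling failure controlled by \cref{cl:stay:comp:good:deco} plus a Markov bound on $\tau_\rho$, then apply \cref{cl:hit:large:set} $A$ times to $Z$ to show $\pr(\tau^Z_{\LE(X)}>2A(\tmix+\floor{\sqrt n}))\leq(1-\eps\delta^4/160)^A$. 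Equivalently in your framing, bound $\pr(Y_\tau=\rho)\leq\pr\bigl(\tau^{\tilde Y}_{\LE(X)}>m \text{ or } Y[0,m]\not\subseteq V_i\bigr)+m\cdot\theta\eps^4/\sqrt n$ for a suitable $m=O(\log(1/\eps)\sqrt n/(\theta\eps))$, rather than passing through $\E\tau^{\tilde Y}$.
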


\begin{proof}
We apply \cref{cl:first:lerw} to obtain the set $V_i'$ and take $v_1,v_2$ to be any two distinct vertices of $V_i'$. Let $Z$ be simple random walk on $G[V_i]$ starting at $v_2$ independent of $X$ (note that unlike $Y$, the random walk $Z$ does not leave $V_i$ and does not visit $\rho$).  By \cref{lem:mix:spec} and condition (3) of an $(\eps,\delta,n^{1.5})$-good decomposition (\cref{def:good:deco}) we have that for some $C=C(\delta)$ 
	\begin{equation*}
	\tmix^{G[V_i]}(\eps/2)+\floor{\sqrt{n}} \leq C\log(1/\eps)\frac{\sqrt{n}}{\theta}.
	\end{equation*}

Fix a constant
	$$A = \ceil{320\log(1/\eps)/\eps\delta^4} ,$$ 
so that by the last estimate, and since $v_2 \in V_i'$ the assertion of \cref{cl:stay:comp:good:deco} implies that 
	\begin{align}\label{eq:Ytakestimes}
	\pr_{v_2}\left(Y\left[0,2A(\tmix^{G[V_i]}(\eps/2)+\floor{\sqrt{n}})\right] \subseteq V_i\right) &\geq \pr_{v_2}\left(Y\left[0,\frac{2AC\log(1/\eps)\sqrt{n}}{\theta}\right] \subseteq V_i\right)\nonumber  \\&\geq 1-\frac{160AC\theta\eps^8}{\delta^3} - \pr\left(\tau_{\rho} \leq \frac{2AC\log(1/\eps)\sqrt{n}}{\theta}\right) \\\nonumber&\geq 1 - C\eps^2,
	\end{align}
for some $C = C(\delta)$. Thus we learn that with probability larger than $1-C\eps^2$ we can couple $Y$ and $Z$ such that $Y_t = Z_t$ for all $t\leq 2A(\tmix^{G[V_i]}(\eps/2)+\floor{\sqrt{n}})$. Now, since $v_1\in V_i'$ the assertion of \cref{cl:first:lerw} states that with probability at least $1-C\eps^2$ we have that $|\LE(X)|\geq \eps \sqrt{n}$ and $\LE(X)\subseteq V_i \cup \{\rho\}$. The minimal degree in $G[V_i]$ is at least $\delta^4n /40$ and $\delta n /2 \leq |V_i|\leq n$, allowing us to apply \cref{cl:hit:large:set} $A$ times together with the previous estimate to obtain that
	\begin{equation*}
	\pr_{v_2}(Z[0,2A(\tmix^{G[V_i]}(\eps/2)+\floor{\sqrt{n}})] \cap \LE(X) = \es) \leq \left(1-\frac{\eps\delta^4}{160}\right)^A + C\eps^2 \leq  (C+1) \eps^2 ,
	\end{equation*}
by our choice of $A$. This together with the coupling of $Y$ and $Z$ and \eqref{eq:Ytakestimes} gives
	\begin{equation*}
	\pr\left(Y_{\LE(X)} \neq \rho \pand \LE(Y) \subset V_i \right) \geq 1-C\eps^2.
	\end{equation*}

We are left with bounding $|\LE(Y)|$ from below. We will show that with large probability $Y[0,\eps^8\theta\sqrt{n})\subseteq \LE(Y)$. Indeed, since $\E_{v_1} \tau_\rho=\sqrt{n}/\theta\eps^{4}$ we have that $|\LE(X)|\leq \sqrt{n}/\theta\eps^6$ with probability at least $1-\eps^2$. Furthermore, since $v_1\neq v_2$ and the degree is at least $\delta n$ we have that $v_2\not \in \LE(X)$ with probability at least $1-\eps^2-\delta^{-1}n^{-1/2}/\theta \eps^6 \geq 1-C\eps^2$. Hence
	\begin{align}\label{eq:one:bound:avoid:U}
		\pr(Y[0,\eps^8\theta\sqrt{n}) \cap \LE(X) \neq \es) &\leq \eps^8\theta\sqrt{n} \frac{\sqrt{n}/\theta \eps^6}{\delta n} + C\eps^2  \leq C\eps^2.
	\end{align} 
Furthermore, since the degree is at least $\delta n$, the union bound gives that
		\begin{equation}\label{eq:prefix:hit:suffix}
		\pr(Y[0,\eps^{8}\theta\sqrt{n}) \cap Y[\eps^{8}\theta\sqrt{n},
		\sqrt{n}/\theta\eps^{6}) \neq\es )
		\leq \frac{\eps^2}{\delta}.
		\end{equation} 
Since $\tau_\rho \geq \sqrt{n}/\theta \eps^6$ occurs with probability at most $\eps^2$, we deduce by \eqref{eq:one:bound:avoid:U}, \eqref{eq:prefix:hit:suffix} that 
		\begin{equation*}
			\pr\left( \LE(Y[0,\eps^{8}\theta\sqrt{n})) \subseteq \LE(Y)\right) \geq 1 - C\eps^2.
		\end{equation*}
for some $C=C(\delta)<\infty$. Lastly, again by the linear minimal degree and the union bound, the probability that there is a repeating vertex in $Y[0,\eps^{8}\theta\sqrt{n})$ is at most $\eps^{16}\theta^2/\delta$; when this does not occur $Y[0,\eps^{8}\theta\sqrt{n})=\LE(Y[0,\eps^{8}\theta\sqrt{n}))$, concluding our proof. \end{proof}

\begin{proof}[Proof of \cref{lem:paths:in:deco}]
	If $\theta\eps^8 \leq 1 / \sqrt{n}$, the claim is trivial. We assume the converse is true, and we take the vertices $v_1^i$ and $v_2^i$ from \cref{lem:path:builder}. We denote by $\varphi$ and $\varphi'$ the paths between them in $\UST(G)$ and $\UST(G^\rho)$, respectively. As mentioned in the beginning of this subsection, we couple $\UST(G)$ and $\UST(G^\rho)$ such that $\UST(G^\rho)\cap E(G)\subseteq\UST(G)$. We use \eqref{eq:dtv:original:path} and recall that under this coupling, if $\rho\notin \varphi'$, then $\varphi=\varphi'$. Hence it suffices to show that
	\begin{equation*}
			\pr\left(\eps^8\theta\sqrt{n} \leq |\varphi'| \leq \frac{\sqrt{n}}{\theta\eps^{8}} \pand \varphi' \subseteq V_i \pand \rho\notin\varphi' \right) \geq 1-C\eps^2,
	\end{equation*}
	for some $C = C(\delta)$.
	By Wilson's algorithm, we can sample $\varphi'$ by sampling $\LE(X)$, a $\LERW$ from $v_1^i$ to $\rho$ and then sampling $\LE(Y)$, another $\LERW$ from $v_2^i$ to $\LE(X)$. The path between $v_1^i$ and $v_2^i$ in $\LE(X)\cup\LE(Y)$ is distributed as the path between $v_1^i$ and $v_2^i$ in $\UST(G^\rho)$.
	By \cref{lem:path:builder} and \cref{cl:first:lerw}, this path is contained in $V_i$ and does not contain $\rho$ with probability larger than $1-C\eps^2$. By construction $|\varphi'|\geq \LE(Y)$ hence \cref{lem:path:builder}  gives the required lower bound on $|\varphi'|$. Finally, as the length of $\LE(X)$ and $\LE(Y)$ is bounded by two independent random variables with the distribution of $\tau_{\rho}$, by Markov's inequality
	\begin{equation*}
	\pr\left(|\LE(X)| + |\LE(Y)| \geq \frac{\sqrt{n}}{\theta\eps^8}\right) \leq 1-C\eps^4,
	\end{equation*}
concluding the proof. 
\end{proof}

\section{Proof of main theorem} \label{sec:proofmainthm}

In \cite{MNS19+}, the following strategy was used to show that the diameter grows like $\sqrt{n}$. First, a small part of the $\UST$ is sampled. This part contains roughly $\sqrt{n}$ vertices (in \cite{MNS19+}, it is simply a path between two vertices). Then, it is shown that this part of the $\UST$ is difficult to avoid in the sense that random walks starting from any vertex of the graph will hit it with positive probability within roughly $\sqrt{n}$ steps. To formalize and quantify this we first define
\begin{equation*}
\vect{p}^t_W(v,v) = \pr_v(X_t = v, \tau_W > t) ,
\end{equation*} 
for any $W\subset V$. Next we define the \defn{$W$-bubble sum} by
\begin{equation*}
\bub_W(G) = \sum_{t=0}^{\infty}(t+1)\sup_{v\in G}\vect{p}^t_W(v,v).
\end{equation*}
If the set $W$ is difficult to avoid, then the $\vect{p}^t_W(v,v)$ decays fast with $t$ and thus $\bub_W(G)$ is small. It is shown in \cite{MNS19+} that if $\bub_W(G)$ is small, then the diameter of $\UST(G/W)$ cannot be too large:

\begin{lemma}[\cite{MNS19+}*{Lemma 3.13}]\label{lem:mns:lemma}
	Let $G = (V,E)$ be a connected graph, let $D=\frac{\max_v \deg(v)}{\min_v \deg(v)}$ and let $W$ be a non-empty vertex set. 
	Let $\cT_W$ be a $\UST$ on the graph $G/W$. Then
	\begin{equation*}
	    \pr(\diam(\cT_W) \geq \ell) \leq \frac{C_3|W|}{\ell},
	\end{equation*}
	for $C_3 = 138420\cdot D^4\bub_W(G)^3\log(192D\bub_W(G))$.
\end{lemma}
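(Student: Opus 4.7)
The plan is to exploit Wilson's algorithm on $G/W$ rooted at the contracted vertex $w$ obtained from identifying $W$. In this rooted setting, the height of any vertex $v$ in $\cT_W$ is exactly the length of the $\LERW$ from $v$ to $W$ in $G$. Since $\diam(\cT_W)\le 2\max_v \mathrm{height}(v)$, it suffices to produce a tail estimate for the maximum height, and in fact it is enough to bound $\E[\diam(\cT_W)]$ by $C|W|$ and apply Markov's inequality.

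The main technical step is to estimate $\E|\LE(X^v)|$ in terms of $\bub_W(G)$, where $X^v$ is the simple random walk from $v$ killed on first hitting $W$. Writing $|\LE(X^v)|=|\{X^v[0,\tau_W]\}|=\sum_{u\in V\sm W}\mathbf{1}_{\tau_u<\tau_W}$ and using the Green-function identity $\pr_v(\tau_u<\tau_W)=G_W(v,u)/G_W(u,u)$, one gets
\begin{equation*}
\E|\LE(X^v)|=\sum_{u\in V\sm W}\frac{G_W(v,u)}{G_W(u,u)}.
\end{equation*}
The ratio $G_W(v,u)/G_W(u,u)$ is controlled by the bubble sum: the denominator is bounded below by $1$ (trivially) and the numerator by the supremum of $G_W$, which is dominated by $\bub_W(G)$ via the definition $\bub_W(G)=\sum_t(t+1)\sup_v \vect{p}_W^t(v,v)$. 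A careful bookkeeping, using the degree ratio $D$ to pass between stationary and uniform weightings, yields $\E|\LE(X^v)|\le CD\bub_W(G)$.

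To upgrade this to a tail estimate I would block-decompose the walk into windows of length proportional to $\bub_W(G)$ and argue that, in each window, with constant probability either $W$ is hit or at least one new vertex is permanently added to the loop erasure. Iterating through $O(\log \ell)$ windows gives an exponential tail $\pr(|\LE(X^v)|\ge t)\le \exp(-ct/\bub_W(G))$, which after a union bound over the vertices of $G/W$ combined with a second-moment estimate yields the claimed polynomial dependence on $\bub_W(G)$ in $C_3$. The factor $|W|$ in the numerator arises naturally by relating the expected tree diameter to the stationary weight of $W$ (which is proportional to $|W|/n$ up to factors of $D$) and using Markov's inequality in the form $\pr(\diam\ge \ell)\le \E[\diam]/\ell$; summing the vertex-wise height bounds weighted by stationary mass produces $|W|$ in the numerator and absorbs the $n$ from the vertex count.

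The hardest part will be converting a sharp first-moment bound on $|\LE(X^v)|$ into a sufficiently strong tail estimate while tracking the correct polynomial dependence on $\bub_W(G)$ (the statement requires a cubic power and a logarithm). The subtlety is that LERW segments produced by successive iterations of Wilson's algorithm are not independent, so to bound the maximum over all vertices I would need to control the probability that a new vertex, added after earlier LERWs have accumulated, produces a long LERW segment to the partial tree rather than to $W$ directly; here the stochastic-domination principle (\cref{lem:neg:ass}) allows one to compare to a LERW killed at $W$ only. Handling the boundary of $W$ delicately and obtaining the logarithmic window count explain the $\log(192D\bub_W(G))$ factor in the final constant.
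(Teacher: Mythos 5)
This lemma is cited from \cite{MNS19+} (their Lemma 3.13) and is not proved in the present paper, so there is no in-paper argument to compare your outline against; I can only examine it on its own terms, and several steps are not merely incomplete but quantitatively wrong. The claimed identity $|\LE(X^v)|=|\{X^v[0,\tau_W]\}|$ is false: the right-hand side is the \emph{range} of the killed walk, which in general strictly exceeds the length of the loop erasure (loop erasure keeps only a subsequence of the path). More seriously, even using the correct inequality $|\LE(X^v)|\le|\{X^v[0,\tau_W]\}|$, the Green-function step cannot give $\E|\LE(X^v)|\le CD\,\bub_W(G)$. Since $G_W(v,u)/G_W(u,u)=\pr_v(\tau_u<\tau_W)$, the sum $\sum_{u\notin W}G_W(v,u)/G_W(u,u)$ is exactly the expected range; bounding each of its $\Theta(n)$ summands by $\sup G_W\le\bub_W(G)$ only yields $n\,\bub_W(G)$, and the bound you assert is in fact false. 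In $K_n$ with $|W|=\sqrt n$ one has $\bub_W(G)=O(1)$ while $\E|\LE(X^v)|=\Theta(\sqrt n)$; the LERW length is governed by $\bub_W(G)$ \emph{together with} $|W|$, and since your first-moment estimate contains no $|W|$ at all, the later attempt to reinsert a $|W|$ factor ``by stationary mass'' has no mechanism to latch onto.

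The block-decomposition step is also not sound as written. You claim that in each window of length $O(\bub_W(G))$, with constant probability ``either $W$ is hit or at least one new vertex is permanently added to the loop erasure.'' The second alternative does not follow from what you have set up: the walk can spend an entire window forming a loop and then erase everything it did there, so neither alternative occurs. Making the loop erasure actually grow requires a non-intersection (second-moment) estimate, which is precisely where the bubble sum earns its keep and where the cubic power and logarithm in $C_3$ come from; a bare hitting-time argument in each window cannot substitute for it, and the resulting $\exp(-ct/\bub_W(G))$ tail for $|\LE(X^v)|$ is unjustified (and, by the $K_n$ example above, false as stated). So the outline identifies the right cast of characters (Wilson's algorithm, Green functions, stochastic domination) but the two central quantitative claims it rests on do not hold.
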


In our context, we will take $W$ to be the union of $k$ paths in the $\UST$ drawn according to an $(\eps,\delta,n^{1.5})$-good decomposition. In the next few claims, using the results we obtained in \cref{sec:rw}, we will show that with high probability $\bub_W(G)=O(1)$, after which we will prove \cref{thm:main}.

\begin{lemma}\label{lem:pbub} For any $\delta>0$ there exists $b(\delta)>0$ such that for any $\eps \in (0,b)$ there exists $C = C(\eps, \delta)<\infty$ and $c=c(\eps,\delta)>0$ such that the following holds. Let $G=(V,E)$ be a connected simple graph on $n$ vertices with minimal degree at least $\delta n$ and $\eps>0$. Denote by $V=V_1 \sqcup \ldots \sqcup V_k$ an $(\eps,\delta,n^{1.5})$-good decomposition of $G$ with parameter $\theta$ (as guaranteed to exist by \cref{lem:main:deco}). Then, for every set $W$ that satisfies $|W\cap V_i| \geq \eps^{8}\theta\sqrt{n}$ for every $i\in[k]$, we have that
	\begin{equation*}
	\sum_{t=1}^{\infty}(t+1)\sup_{v\in G} \vect{p}^t_W(v,v) \leq C.
	\end{equation*}
\end{lemma}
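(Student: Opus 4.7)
The plan rests on three ingredients: (i) a uniform return-probability bound $\vect{p}^t(v,u) \leq 1/(\delta n)$ valid for all $t\geq 2$; (ii) an exponential tail $\sup_v \pr_v(\tau_W > t) \leq 2e^{-t/T}$ with $T = O_\delta(\sqrt{n}\log(1/\eps)/(\theta^2\eps^8))$; and (iii) a Chapman--Kolmogorov split at time $2$ which combines them. Ingredient (i) follows since $\vect{p}^2(v,u) = \sum_{w} P(v,w)P(w,u) \leq \sum_{w \sim v} 1/(\deg(v)\deg(w)) \leq 1/(\delta n)$ by the minimum-degree assumption, and hence $\vect{p}^t(v,u) = \sum_w \vect{p}^{t-2}(v,w)\vect{p}^2(w,u) \leq 1/(\delta n)$ for every $t\geq 2$.

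For the tail (ii), I set $T_1 := 2(\tmix^{G[V_i]}(\eps^8\theta/2)+\sqrt{n})$. By condition (3) of a good decomposition combined with \cref{lem:mix:spec} (applicable since $G[V_i]$ has minimum degree $\geq \delta^4 n/40$ by condition (4)), one has $T_1 = O_\delta(\sqrt{n}\log(1/\eps)/\theta)$ uniformly in $i$. Applying \cref{cl:hit:large:set} to $G[V_i]$ with the set $W\cap V_i$ of size $\geq \eps^8\theta\sqrt{n}$ shows the $G[V_i]$-walk from any starting vertex hits $W\cap V_i$ within $T_1$ steps with probability $\geq c(\delta)\eps^8\theta$. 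The path-probability comparison $\pr^G(X[0,T_1]=\gamma) \leq \pr^{G[V_i]}(Y[0,T_1]=\gamma)$ for paths $\gamma\subseteq V_i$ (which holds since $\deg_{G[V_i]}(w)\leq \deg_G(w)$), combined with \cref{cl:stay:comp:good:deco} applied with constant $T_1/\sqrt{n}$ (yielding a $G$-exit probability of only $O_\delta(\theta\eps^9\log(1/\eps))$, negligible next to $\eps^8\theta$ for small $\eps$), gives $\sup_v\pr^G_v(\tau_W>T_1)\leq 1-c'(\delta)\eps^8\theta$. Iterating via the Markov property yields the claimed exponential tail with $T = T_1/(c'\eps^8\theta)$.

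For (iii), the Markov property applied at time $2$ gives, for $t\geq 2$,
\begin{equation*}
\vect{p}^t_W(v,v) = \sum_u \vect{p}^2_W(v,u)\,\vect{p}^{t-2}_W(u,v) \leq \frac{1}{\delta n}\sum_u \vect{p}^{t-2}_W(u,v).
\end{equation*}
The killed chain is reversible with respect to $\pi$, so the detailed balance $\pi(u)\vect{p}^{t-2}_W(u,v)=\pi(v)\vect{p}^{t-2}_W(v,u)$ together with $\max\pi/\min\pi\leq 1/\delta$ bounds the inner sum by $\pr_v(\tau_W>t-2)/\delta$. Hence $\vect{p}^t_W(v,v) \leq \pr_v(\tau_W > t-2)/(\delta^2 n)$ for $t\geq 2$, and summing with weights $(t+1)$ yields
\begin{equation*}
\sum_{t\geq 1}(t+1)\sup_v\vect{p}^t_W(v,v) \;\leq\; 2 \;+\; \frac{O(\E[\tau_W^2])}{\delta^2 n} \;=\; O_\delta\!\left(\frac{T^2}{n}\right) \;=\; O_\delta\!\left(\frac{\log^2(1/\eps)}{\theta^4\eps^{16}}\right),
\end{equation*}
a finite constant depending only on $\eps$ and $\delta$ since $\theta\geq \eps^{11\cdot 2^{2/\delta}}$ by the definition of a good decomposition.

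The main obstacle is calibrating the window $T_1$ in step (ii) so that the hitting probability on $G[V_i]$ (only $\Omega_\delta(\eps^8\theta)$) dominates the probability that the $G$-walk escapes $V_i$ within $T_1$ steps; one must exploit the fact that the escape probability carries an extra factor of $\theta\eps$ beyond the hitting bound. A naive split of the bubble sum at the time $t^*\approx T\log n$ where $\pr(\tau_W>t)$ crosses $1/n$ would produce a spurious $\log^2 n$ factor, so the Chapman--Kolmogorov step is essential: it extracts the $1/n$ directly from $\vect{p}^2$ and transfers the entire $t$-dependence onto $\E[\tau_W^2]/n$, a quantity that is $n$-free.
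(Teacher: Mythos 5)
Your overall strategy matches the paper's: establish that the random walk hits $W$ within $O_{\eps,\delta}(\sqrt{n})$ steps with probability $\Omega_{\eps,\delta}(1)$, then convert this into an exponential tail for $\tau_W$ and combine it with the two-step return bound $\mathbf{p}^2(v,u)\leq 1/(\delta n)$ to bound the bubble sum. Your step (iii) (Chapman--Kolmogorov at time $2$, reversibility to convert $\sum_u \mathbf{p}^{t-2}_W(u,v)$ into a tail probability, and the explicit $\E[\tau_W^2]/n$ bound) is a tidy reorganization of the paper's dyadic summation, and your step (i) matches the paper exactly. These are essentially the same proof.

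There is, however, a genuine gap in step (ii). You claim $\sup_{v\in V}\pr^G_v(\tau_W>T_1)\leq 1-c'(\delta)\eps^8\theta$ by writing $\pr^G_v(\tau_W>T_1)\leq \pr^{G[V_i]}_v(\tau_W>T_1)+\pr^G_v(X[0,T_1]\not\subseteq V_i)$ and invoking \cref{cl:stay:comp:good:deco} to make the second term $O_\delta(\theta\eps^9\log(1/\eps))$. But \cref{cl:stay:comp:good:deco} does \emph{not} give a uniform bound on $\pr^G_v(X[0,T_1]\not\subseteq V_i)$ over all $v$: inequality \eqref{eq:good:vertex:stays:in:set} holds only for $v$ in a proper subset $V_i'\subseteq V_i$, while \eqref{eq:stay:in:comp} controls exits only at times $t\geq 2$ (it uses $\pr_v(X_t=w)\leq 1/(\delta n)$, which fails for $t=0$). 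For a general $v\in V_i$, the first-step exit probability $\pr_v(X_1\notin V_i)=\deg(v,V_i^c)/\deg(v)$ can be as large as $1-\delta^4/40$, at which point your decomposition gives a right-hand side close to $1$ and the conclusion is vacuous. The paper's fix is to burn the first step: pick $V_j$ with $\pr_v(X_1\in V_j)\geq 1/k$, restrict to the event $\{X_1\in V_j\}$, and then apply the hitting estimate and \eqref{eq:stay:in:comp} \emph{from time $1$ onward}. This costs an $\Omega_\delta(1)$ factor (either $1/k$ or $\delta^4/40$, since $v$ always has $\pr_v(X_1\in V_i)\geq \delta^4/40$ for its own set $V_i$) but makes the exit-probability term genuinely small, because after one step the walk is spread out over $V_i$ with density at most $1/(\delta n)$ per vertex. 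You should add this conditioning; without it the inequality $\sup_v\pr^G_v(\tau_W>T_1)\leq 1-c'\eps^8\theta$ is asserted rather than proved.

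A minor remark: your path-probability inequality $\pr^G(X[0,T_1]=\gamma)\leq\pr^{G[V_i]}(Y[0,T_1]=\gamma)$ for $\gamma\subseteq V_i$ is correct and gives $\pr^G_v(\tau_W>T_1,\ X[0,T_1]\subseteq V_i)\leq\pr^{G[V_i]}_v(\tau_W>T_1)$, which is a clean substitute for the coupling the paper uses implicitly; it is not the source of the problem. The problem is entirely in bounding the complementary event uniformly over $v$.
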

\begin{proof} Let $W$ be such a set and fix $v\in V$. We will first show that 
\begin{equation}\label{eq:pbubtoshow}
		\pr_v\left(X\left[0,C\sqrt{n}\right] \cap W \neq \es\right) \geq c.
\end{equation}
for some $C,c$ depending on $\eps$ and $\delta$. 
There exists at least one component $V_i$ in the decomposition such that $\pr_v(X_1 \in V_i) \geq 1/k$ (note that $v$ does not necessarily belong to $V_i$). Let $Y$ be a random walk on $G[V_i]$ starting from some $u\in V_i$. By condition (4) of an $(\eps,\delta,n^{1.5})$-good decomposition (\cref{def:good:deco}), the minimal degree of $G[V_i]$ is at least $\delta^4n/40$. We apply \cref{cl:hit:large:set} to the graph $G[V_i]$, with $\eps^{8}\theta$ playing the role of $\eps$ in the claim, to obtain that for every $u\in V_i$
	\begin{equation}\label{eq:prob:to:hit:W}
	\pr_u \Big (Y[0,2(\tmix^{G[V_i]}(\eps^{8}\theta) + \floor{\sqrt{n}})] \cap W \neq \es\Big ) \geq \frac{\eps^{8}\theta\delta^{4}}{160}.
	\end{equation}
	By definition of an $(\eps,\delta,n^{1.5})$-good decomposition (\cref{def:good:deco}) we have $\theta \geq \eps^{11\cdot 2^{2/\delta}}$. Hence by \eqref{eq:exp:mix}, \cref{lem:mix:spec} and condition (3) of \cref{def:good:deco} we get
	\begin{equation}\label{eq:up:bound:mix:time}
	\tmix^{G[V_i]}(\eps^{8}\theta) + \floor{\sqrt{n}} \leq (8+11 \cdot 2^{2/\delta})(\tmix^{G[V_i]}(\eps/2)) + \floor{\sqrt{n}} \leq \frac{B\log(1/\eps)\sqrt{n}}{\theta},
	\end{equation}  
	for some $B = B(\delta)$. By \cref{cl:stay:comp:good:deco}
	\begin{equation}\label{eq:couple:failure:hit:W}
	\pr\Big (X_1 \in V_i \pand \exists t \leq 2(\tmix^{G[V_i]}(\eps^{8}\theta) + \floor{\sqrt{n}}) \hbox{ with }  X_t\notin V_i \Big ) \leq \frac{2B\theta\log(1/\eps)\eps^9}{\delta^2}.
	\end{equation}
	Hence, conditioned on $X_1 \in V_i$, if we set $Y_0 = X_1$ then we can couple these two walks such that $Y_t = X_{t+1}$ for all $t\leq2(\tmix^{G[V_i]}(\eps^{8}\theta) + \floor{\sqrt{n}})$ with failure probability bounded by the right-hand side of \eqref{eq:couple:failure:hit:W}. This and \eqref{eq:prob:to:hit:W} imply that
	\begin{align*}
	&\pr_v\Big (X[0,2(\tmix^{G[V_i]}(\eps^{8}\theta) + \floor{\sqrt{n}})	] \cap W \neq \es\Big ) \geq {1\over k} \Big (\frac{\eps^{8}\theta\delta^{4}}{160} - \frac{2B\theta\log(1/\eps)\eps^9}{\delta^2} \Big ) .
	\end{align*}
	Plugging in \eqref{eq:prob:to:hit:W} and \eqref{eq:couple:failure:hit:W} we obtain that the right-hand side is bounded from below by
	\begin{align*}
	\frac{1}{k}\left(\frac{\eps^{8}\theta\delta^{4}}{160} - \frac{2B\theta\log(1/\eps)\eps^9}{\delta^2}\right),
	\end{align*}
	which is lower bounded by some $c = c(\eps,\delta)$. Now \eqref{eq:pbubtoshow} follows by \eqref{eq:up:bound:mix:time} and taking $C=\frac{B\log(1/\eps)}{\theta}$. Now by \eqref{eq:pbubtoshow} and the Markov property, for any positive integer $m$ and any $t\in [mC\sqrt{n},(m+1)C\sqrt{n}]$ we have
	\begin{equation*}
	\vect{p}_W^t(v,v) \leq \frac{(1-c)^m}{\delta n} ,
	\end{equation*} 
	where the denominator accounts for the last step returning to $v$. We conclude that 
	\begin{align*}
	\bub_W(G) = \sum_{t=0}^{\infty}(t+1)\vect{p}^t_W(v,v) 
	&\leq \sum_{m=0}^{\infty}\sum_{t=mC\sqrt{n}}^{(m+1)C\sqrt{n}}((m+1)C\sqrt{n})\frac{(1-c)^m}{\delta n} 
	\\&\leq \frac{C^2}{\delta}\sum_{m=0}^{\infty}(m+1)\left(1 - c\right)^m ,
	\end{align*}
and this concludes our proof since the infinite sum above converges.
\end{proof}

\begin{proof}[Proof of \cref{thm:main}]
	Let $\eps>0$ and let $G=(V,E)$ be a connected simple graph on $n$ vertices with minimal degree at least $\delta n$. By \cref{lem:main:deco}, there exists an $(\eps,\delta,n^{1.5})$ good decomposition of $G$, denoted by $V=V_1\sqcup \ldots \sqcup V_k$. By \cref{lem:paths:in:deco} there exist some $C' = C'(\delta)$ and $k$ pairs of distinct vertices $v_1^i,v_2^i \in V_i$, such that if $\varphi_i$ is the random path between $v_1^i$ and $v_2^i$ in $\UST(G)$, then
	\begin{equation}\label{eq:k:paths:prob}
		\pr\left(\forall i\in[k] : \eps^8\theta\sqrt{n} \leq |\varphi_i| \leq \frac{\sqrt{n}}{\theta\eps^{8}} \pand \varphi_i \subseteq V_i	 \right) \geq 1-C'k\eps^2.
	\end{equation} 
	We condition on this event and on the collection of paths $(\varphi_i)_{i\in[k]}$ and denote by $W$ the set of vertices of $(\varphi_i)_{i\in[k]}$. Let $H$ be the graph obtained from $G$ by contracting each $\varphi_i$ into a single vertex. Then \cref{lem:contract} implies that $\UST(H) \cup \{\varphi_i\}_{i\in[k]}$ has the distribution of $\UST(G)$. Hence $\diam(\UST(G)) \leq \diam(\UST(H)) + |W|$.
	Denote also by $\cT_W$ the $\UST$ on $G/W$. By \cref{lem:neg:ass:contract}, we have that $\UST(H)$ stochastically dominates $\cT_W := \UST(G/W)$ (when viewed as random subsets of $E(G)$) hence there is a coupling such that $\cT_W \subseteq \UST(H)$ and since $H$ has $k-1$ vertices more than $G/W$ we deduce that $\UST(H)$ is a union of $\cT_W$ and at most $k-1$ more edges. Hence the diameter of $\UST(H))$ is at most $k-1$ times the diameter of $\cT_W$. We conclude that
	$$ \diam(\UST(G)) \leq (k-1)\diam(\cT_W) + |W| \, ,$$

	Now, \cref{lem:pbub} implies that the set $W$ has $\bub_W(G) \leq C$ for some $C=C(\eps,\delta)$ so that \cref{lem:mns:lemma} gives
	\begin{equation}\label{eq:height:bound}
	    \pr(\diam(\cT_W) \geq \ell) \leq \frac{C_3|W|}{\ell},
	\end{equation}
	for $C_3 = C_3(\eps,\delta)$ and any $\ell \geq 1$. Hence under our conditioning we have that for any $A>k/\theta \eps^8$
	$$ \pr(\UST(G)\geq A\sqrt{n}) \leq \pr \Big ( \diam(\cT_W) \geq (A-k/\theta \eps^8)\sqrt{n}/(k-1) \Big ) \leq \frac{C_3 k(k-1)}{ \theta \eps^8(A-k/\theta \eps^8)} \, ,$$
	which together with \eqref{eq:k:paths:prob} concludes the proof. \end{proof}

\section*{Acknowledgments}
NA is supported in part by
NSF grant DMS-1855464, BSF grant 2018267 and the Simons Foundation. AN and MS are supported by ISF grants 1207/15 and 1294/19 as well as ERC starting grant 676970 RANDGEOM. We thank Asaf Shapira for useful discussions and his assistance in proving \cref{cl:prim:deco}, and also Majid Farhadi, Suprovat Ghoshal, Anand Louis, and Prasad Tetali for allowing us to present their alternate proof of \cref{thm:cheeger:is:everything}, see \cref{rmrk:alternate}.

\bibliography{library}
\end{document}